\documentclass[11pt,a4paper,reqno]{amsart}

\usepackage[T1]{fontenc}
\usepackage[utf8]{inputenc}
\usepackage[left=2cm,right=2cm,top=2cm,bottom=2cm]{geometry}

\usepackage{mathtools}
\mathtoolsset{showonlyrefs}
\usepackage{amssymb}
\usepackage{dsfont}

\usepackage{enumitem}

\usepackage[square,sort,comma,numbers]{natbib}
\usepackage[colorlinks=true,linkcolor=blue,citecolor=blue,urlcolor=blue,breaklinks]{hyperref}
\usepackage{cleveref}

\numberwithin{equation}{section}
\numberwithin{figure}{section}

\newcommand{\cL}{\mathcal{L}}
\newcommand{\sfL}{\mathsf{L}}
\newcommand{\sfT}{\mathsf{T}}

\newcommand{\R}{\mathbb{R}}
\newcommand{\Rd}{\R^d}
\newcommand{\RRd}{\R^d\times\R^d}
\newcommand{\dd}{\,\mathrm d}
\newcommand{\cLs}{\mathcal L^\star}

\newcommand{\one}{\mathbf 1}

\newcommand{\eps}{\varepsilon}
\def\1{\mathds{1}}
\newcommand{\norm}[1]{\left\lVert #1\right\rVert}
\newcommand{\abs}[1]{\left\lvert #1\right\rvert}
\newcommand{\cK}{\mathcal K}
\newcommand{\cM}{\mathcal M}

\newcommand{\wdot}{\lfloor \cdot \rceil}
\newcommand{\wx}{\lfloor x \rceil}

\newcommand{\wv }{\lfloor v \rceil}
\renewcommand{\wr}{\lfloor r \rceil}
\newcommand{\ws}{\lfloor s \rceil}
\newcommand{\wu}{\lfloor u \rceil}

\theoremstyle{plain}
\newtheorem{theorem}{Theorem}[section]
\newtheorem{lemma}[theorem]{Lemma}
\newtheorem{hypothesis}[theorem]{Hypothesis}
\newtheorem{definition}[theorem]{Definition}

\newtheorem{proposition}[theorem]{Proposition}

\AddToHook{env/lemma/begin}{\crefalias{theorem}{lemma}}
\AddToHook{env/hypothesis/begin}{\crefalias{theorem}{hypothesis}}
\AddToHook{env/proposition/begin}{\crefalias{theorem}{proposition}}
\AddToHook{env/definition/begin}{\crefalias{theorem}{definition}}
\AddToHook{env/remark/begin}{\crefalias{theorem}{remark}}
\AddToHook{env/assumption/begin}{\crefalias{theorem}{assumption}}

\title[Kinetic Fokker-Planck equations with weak space confinements]{Lyapunov estimates for non-factorised kinetic Fokker-Planck equations under weak spatial confinements}

\author{\'Emeric Bouin}
\address[E. Bouin]{CEREMADE - Université Paris-Dauphine, PSL Research University, UMR CNRS 7534, Place du Mar\'echal de Lattre de Tassigny, 75775 Paris Cedex 16, France.}
\email{bouin@ceremade.dauphine.fr}

\date{\today}

\begin{document}
    \begin{abstract}
 In this paper, we construct weak Lyapunov functionals for kinetic Fokker-Planck equations with weak external confinements. This problem has remained widely open in the existing literature. Apart from showing interesting structures from the models, deriving these Foster-Lyapunov weights allows us to use Harris type arguments to show existence of steady states for these equations, and convergence to equilibrium with an explicit stretched exponential rate. 

    \end{abstract}
\maketitle
    

\section{Introduction}
This work is devoted to the kinetic Fokker-Planck equation,
\begin{equation}\label{eq:mainFP}
    \partial_t f=\mathcal{L}f:=-v\cdot\nabla_x  f+\nabla_x  V\cdot \nabla_v  f+\nabla_v \cdot \left( \nabla_vf + \wv^{\beta-2}v f\right) 
\end{equation}
where the unknown function $f=f(t,x,v)$ depends on time $t\in[0,\infty)$, position $x\in\Rd$ and velocity $v\in\Rd$. In this paper, $\beta >0$. We consider external potentials $V\colon \Rd\to\R$ of the form
\begin{equation}\label{eq:V}
V=\frac{\wdot^\alpha}{\alpha}
\end{equation}
with $0 < \alpha\leq 1$, where we denote $\wdot=\left(1+\abs{\cdot}^2\right)^{\frac12}$. The arguments extend to potentials with the same asymptotic behaviour as $\frac{\wdot^\alpha}{\alpha}$;
for readability, however, we restrict the presentation to the exact situation \eqref{eq:V}. The paper could be written equally with a potential $V$ that is behaving like $\frac{\wx^\alpha}{\alpha}$, without strict equality, but we stick to this to ease readability. For such an equation, the local equilibrium is of the form
\begin{align}\label{eq:Mexp}
     \cM=c_\beta^{-1} e^{-\frac{\wdot^\beta}{\beta}},
\end{align}
with ($\beta>0$ and) $c_\beta = \int_{\R^d} \exp\left(-\frac{\wv^\beta}{\beta}\right)\dd v $. 

The operator $\cL$ in \eqref{eq:mainFP} can be split into $\cL=\sfL - \sfT$, where $\sfT$ is a transport operator and $\sfL$ is a collision operator. The transport operator $\sfT$ is 
\begin{equation*}
\sfT f = v\cdot\nabla_x  f-\nabla_x  V\cdot \nabla_v  f.
\end{equation*}
The collision operator $\sfL$ is the Fokker-Planck operator,
\begin{equation*}
\sfL f =\nabla_v \cdot \left( \cM\nabla_v \left(\frac{f}{ \cM}\right)\right) = \Delta_v f +\nabla_v \cdot\left( \wv ^{\beta -2} v \, f\right).
\end{equation*}
The case $\beta=2$, for any $\alpha >0$, is very special, since in this case the equation has an explicit global Gibbs state,
\begin{equation*}
G \propto \exp(-E),
\end{equation*}
where $E$ is the mechanical energy associated with the Hamiltonian transport operator $\sfT$,
\begin{equation}
    E(x,v)=\frac{\abs{v}^2}{2} + V(x).
\end{equation}
This equation has been extensively studied and various hypocoercivity methods have been used \cite{Dolbeault2015,MR2576899,MR2813582,Helffer2005,Hrau2004,MR2294477,Villani2009} to show exponential convergence to the steady state in weighted $\sfL^2$ norms. Still with $\beta=2$, in \cite{Cao2019}, the author proves convergence for weaker potential \eqref{eq:V} with $\alpha\in(0,1)$. We will come back to this later on.
For other results on weak confinement we refer the interested reader to \cite{Bakry2008,MR2499863}. When $\alpha > 1$, together with Ziviani, the author has extended in \cite{BouinZiviani} the results of \cite{C21}, where only $\beta\geq2$ is covered. Result for the missing range of parameters $0<\beta<2$ are provided, and stretched exponential decay rates are observed. Nevertheless, the approaches described in \cite{BouinZiviani} were crucially using the fact that $\alpha > 1$ to gain spatial coercivity. One aim of this paper is to go beyond this obstruction and tackle weak spatial confinements.  

On the other hand, when the local equilibrium $\cM$ is not a Gaussian, the transport operator and the collision operator do not share the same kernel. This fact makes the existence of a stationary state $G$ much harder to prove. It is therefore necessary to use non-explicit methods, like the Harris theorem, to show the existence of such an equilibrium. In particular, this strategy requires the knowledge of a Lyapunov type condition, which is in general quite complicated to get. 

For further use, we define the formal dual operator $\cL^{\star}$ by
\begin{equation}\label{Lstar}
    \mathcal{L}^{\star}m:=v\cdot\nabla_x  m-\nabla_x  V\cdot \nabla_v  m +\Delta_v m -\wv^{\beta-2}v \cdot\nabla_v  m  
\end{equation}
We write
\begin{equation*}
\sfL^\star:=\Delta_v-\wv ^{\beta-2}v\cdot\nabla_v,
\end{equation*}
so that $\cL^\star=\sfL^\star + \sfT$, since $\sfT$ is anti-symmetric.

For a positive weight function $m\colon \RRd\to \R$, we denote by $\sfL^1(m)$ the functional space defined by the norm
\begin{equation*}
\norm{f}_{\sfL^1(m)}:=\iint_{\R^d \times \R^d} \abs{f(x,v)}m(x,v)\dd x \dd v .
\end{equation*}

The first main results are the construction of weak Foster-Lyapunov functionals for $\alpha \in (0,1]$ and any $\beta >0$. This was widely open from the previous literature.

\begin{theorem}\label{thm:main-lyapunov}
Let $d \geq 1$, $\alpha \in (0,1]$ and $\beta >0$.
\begin{enumerate}
    \item For $\alpha =1 $ and $\beta >0$. For $0<\theta\le \min\left(1,\frac{\beta}{2}\right)$, there exists an explicit coercive $m_\theta$ such that
\begin{equation*}
  \mathcal L^{\star} m_\theta
  \le C\mathbf 1_{\mathcal K} -c\,\frac{m_\theta}{(\ln(e+m_\theta))^{\frac{1-\theta}{\theta}}}.
\end{equation*}
When $\theta =1$, the denominator is really interpreted as $1$.
\item  For $\alpha <1 $ and $\beta \geq 2$. There exists an explicit positive and coercive weight $m$ such that
\begin{equation*}
 \cL^\star m
 \le C\one_{\mathcal{K}}
 - c
  \frac{m}{(\ln(e+m))^{\frac{2(1-\alpha)}{\alpha}}},
\end{equation*}
\item  For $\alpha <1 $ and $\beta <2$.
There exists an explicit positive and coercive weight $m$ such that
\begin{equation*}
 \cL^\star m
 \le C\one_{\mathcal{K}}
 - c
  \frac{m}{(\ln(e+m))^{\sigma}},
\end{equation*}
with $\sigma = \frac{2}{\beta}\left(1-\frac{\beta}{2}+\frac{2(1-\alpha)}{\alpha}\right).$ 
\end{enumerate}
Above, $\mathcal{K}$ denotes a compact in the phase space $\R^d \times \R^d$, and $C$ and $c$ are explicit constants (different from one item to another).
\end{theorem}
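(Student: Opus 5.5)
The plan is to build $m$ as a function of a perturbed energy. Since $\cM$ is not Gaussian, the natural Hamiltonian is $E(x,v)=\frac{\abs{v}^2}{2}+V(x)$. It satisfies $\sfT E=0$ and
\[
\sfL^\star E = d - \wv^{\beta-2}\abs{v}^2 .
\]
This gives dissipation in $v$ but none in $x$, so I add a mixed corrector. I take
\[
H = E + \eps\, \frac{x\cdot v}{\wx^{\gamma}}
\]
with $\gamma\ge 0$ and $\eps$ small, so that $H\simeq E$ and $H$ is coercive. Then $m=\Phi(H)$ for a suitable increasing profile $\Phi$: $\Phi(s)=e^{\kappa s^{\theta}}$ when $\alpha=1$, and a stretched exponential in $H$ with a power matched to $\alpha,\beta$ otherwise.

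First I compute the transport of the corrector:
\[
\sfT\Big(\frac{x\cdot v}{\wx^\gamma}\Big)=\frac{\abs{v}^2}{\wx^\gamma}-\gamma\frac{(x\cdot v)^2}{\wx^{\gamma+2}}-\frac{x\cdot\nabla V}{\wx^\gamma},
\]
and here $x\cdot\nabla V=\abs{x}^2\wx^{\alpha-2}$. The last term supplies spatial dissipation of size $\wx^{\alpha-\gamma}$. The collision part acting on the corrector gives $-\wv^{\beta-2}\frac{x\cdot v}{\wx^\gamma}$, which is controlled by Young against the $v$-dissipation $\wv^{\beta-2}\abs{v}^2$ and the $x$-dissipation when $\eps$ is small and $\gamma$ is chosen appropriately. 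For $\alpha=1$ I take $\gamma=1$. The outcome should be
\[
\cL^\star H\le C-c\big(\wv^{\beta-2}\abs{v}^2\wedge \abs{v}^2+\wx^{\alpha-\gamma}\big)
\]
outside a compact set.

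Next I apply the chain rule for a nonlinear profile:
\[
\cL^\star\Phi(H)=\Phi'(H)\,\cL^\star H+\Phi''(H)\abs{\nabla_v H}^2 .
\]
The second-order term is bounded using $\abs{\nabla_v H}^2\lesssim \abs{v}^2+\eps^2$, and it is absorbed only when $\Phi''/\Phi'\lesssim \kappa H^{\theta-1}$ is small relative to the dissipation ratio. For $\beta\ge2$ the velocity dissipation is at least $\abs{v}^2$, but for $\beta<2$ it is only $\abs{v}^{\beta}$. This is why $\theta\le\beta/2$ in item (1) and why the extra factor $\frac{2}{\beta}$ appears in item (3).

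The rate is then determined by the worst direction. The dissipation $\Phi'(H)\big(\abs{v}^{\min(\beta,2)}+\wx^{\alpha-\gamma}\big)$ must be compared with $\Phi(H)$ via $H\sim \abs{v}^2+\wx^\alpha$. The weakest region is $v$ bounded and $x$ large, where one only gains $\wx^{\alpha-\gamma}\sim H^{(\alpha-\gamma)/\alpha}$. Writing the result in terms of $\ln m\sim H^\theta$ converts this polynomial loss in $H$ into the announced powers of $\ln(e+m)$.

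The main obstacle is this region of small velocity, where $\sfL^\star$ gives nothing and the only gain comes from the corrector's transport term. I must choose $\gamma$, tentatively $\gamma=2-\alpha$ when $\alpha<1$, so that three requirements hold at once. The cross term $\gamma\frac{(x\cdot v)^2}{\wx^{\gamma+2}}$ and the collision error $\wv^{\beta-2}\frac{x\cdot v}{\wx^\gamma}$ must stay dominated. The corrector must remain a perturbation of $E$, i.e. $\abs{x}\abs{v}\wx^{-\gamma}\ll \abs{v}^2+\wx^\alpha$. Balancing these constraints is what should produce the exponent $\frac{2(1-\alpha)}{\alpha}$. If a single corrector does not suffice, I would split phase space into $\abs{v}^2\gtrless \wx^{\alpha}$ and glue two weights.
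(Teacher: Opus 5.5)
Your construction fails in the region you single out, and for a sharper reason than ``$\sfL^\star$ gives nothing there''. It gives something \emph{positive}: $\sfL^\star E=d-\wv^{\beta-2}\abs{v}^2\to d$ as $v\to0$, and a corrector that is only an $\eps$-perturbation of $E$ cannot beat this constant.

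\textbf{At $v=0$.} One has $\cL^\star H=d-\eps\abs{x}^2\wx^{\alpha-2-\gamma}\ge d-\eps$ for your choices ($\gamma=1$ if $\alpha=1$, $\gamma=2-\alpha$ if $\alpha<1$). When $\alpha<1$ this even tends to $d$. Since $\Phi''\ge0$ at large argument for a stretched exponential, $\cL^\star\Phi(H)\ge\Phi'(H)(d-\eps)>0$ on the unbounded set $\{v=0\}$. So your bound ``$C-c(\cdots+\wx^{\alpha-\gamma})$ outside a compact'' never absorbs $C$.

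\textbf{No other $(\gamma,\eps)$ helps when $\alpha<1$.} Take $v=-u\,x/\abs{x}$ with $u>0$ small and fixed. Because $\alpha<1$, the collision error $\eps\wu^{\beta-2}u\abs{x}\wx^{-\gamma}$ is never dominated at infinity by the gain $\eps\abs{x}^2\wx^{\alpha-2-\gamma}$. Hence $\liminf_{|x|\to\infty}\cL^\star H\ge d-\wu^{\beta-2}u^2>0$.

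\textbf{The case $\alpha=1$ with $\eps>d$.} Along the same direction the limit becomes $d-\eps+\wu^{\beta-2}u(\eps-u)$. At $u=\eps/2$ this is at least $d-1+(1-\frac{\eps}{2})^2\ge0$ when $\beta\ge2$. This explains why the paper uses the elementary cross term $\frac{x\cdot v}{\wx}$ only for $\beta\le1$. Even there it is multiplied by $E$, taken with $\eps$ \emph{large}, supplemented by an extra $AE$ to keep $H\asymp E^2+AE$, and cut off in velocity by $\psi_R$.

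\textbf{The case $\beta<2$.} The transport of your corrector produces $\eps\abs{v}^2\wx^{-\gamma}$. At fixed $x$ this beats the velocity dissipation $\wv^{\beta-2}\abs{v}^2\sim\abs{v}^\beta$ as $\abs{v}\to\infty$, so the corrector must be truncated in velocity.

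\textbf{What is missing is the reverse scaling.} At moderate velocities the corrected spatial phase must be the leading part, with the corrector carrying the same weight as the profile it corrects. The kinetic energy, which is the source of $+d$, must enter with a much smaller weight, matched to $\abs{\nabla V}^2$ when $\alpha<1$.

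The paper achieves this with the cell corrector $\sfL^\star\chi=-v$. It gives $\cL^\star(U+\nabla U\cdot\chi)=v^\top D_x^2U\,\chi-\nabla V^\top D_v\chi\,\nabla U$, with no $\wv^{\beta-2}x\cdot v$ error and a signed gain from $D_v\chi\ge c_\chi\wv^{2-\beta}$. The kinetic part is then added with a small weight:
\begin{itemize}
\item for $\alpha=1<\beta$, as $Ke^{\kappa E^\theta}$ with $K$ small next to $e^{\kappa F_R}$;
\item for $\alpha<1$, $\beta>2$, as $\eps\,\eta(\tau,v)\frac{\abs{v}^2}{2}$ with $\eta=\tau=\wx^{-2(1-\alpha)}$ below the threshold $R_0\tau^{-\nu}$, so the positive term is $O(\eps\tau)$ and is beaten by the gain of $V+\nabla V\cdot\chi$;
\item for $\beta<2$, not at all when $\Theta=\wv E^{-a}$ is small, since $\Phi=(1-\zeta(\Theta))F+\zeta(\Theta)E^{\beta/2}$ with $\zeta(\Theta)=e^{-1/\Theta}$ there.
\end{itemize}

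Your exponent bookkeeping is correct, and gluing two weights is the right instinct, but the gluing is where the work lies. For $\beta<2$ the paper switches at $\wv\sim E^a$ with $a<\frac12$, not at $\abs{v}^2\sim\wx^\alpha$. This guarantees that $E\asymp V$, that the Hessian terms in $\cL^\star F$ are lower order, and that $E^{\beta/2}-F$ is small. It also makes the transition term $\zeta'(\Theta)(\cLs\Theta)(E^{\beta/2}-F)$ negative, so it absorbs the other transition terms.
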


Let us comment on the proof of this result. For the first item, we will separate the cases $\beta > 1$ and $\beta \leq 1$. Indeed, the latter can be dealt by adapting and extending a bit the construction made in \cite{BouinZiviani} for $\alpha >1$ and does not really require much technology (however, it would be possible to adapt the constructions made for $\alpha < 1$ so that they also work for $\alpha =1$ with a similar shape, but a bit more involved than the one we write here). The construction made for the former is a bit different, and is an opportunity to introduce the so-called velocity cell problem, which is,  \begin{equation*}
 \sfL^\star\chi=-v\qquad\text{in }\R^d.
\end{equation*}
supplemented with a normalisation condition, 
\begin{equation*}\int_{\R^d}\chi(v)\,\cM(v) \dd v=0.
\end{equation*}
Actually, since the construction and properties of $\chi$ are necessary for the two other items, we dedicate the full \Cref{sec:corrector} to it. This section can be read independently and is interesting on its own. Moreover, the proofs in the case $\alpha =1$ show the importance of using velocity cut-offs in the expression of the Lyapunov functional. There, the radius of the cut-off in velocity will not depend on space. 

For the two other items, we create functionals containing a cut-off in velocity whose radius depends on space ($\beta > 2$) or on the full energy ($\beta \leq 2$). Even though some existing results in the case $\beta = 2$ were present in \cite{Cao2019}, we provide a construction here that improves theirs, since the sharp value $\frac{2(1-\alpha)}{\alpha}$ is reachable in our construction. We dedicate a short section to this special case, since it is easier to write than the general $\beta < 2$.  

Although the Lyapunov functions constructed below take different forms
in the various parameter regimes, they all rely on the same two
mechanisms. The first one transfers the velocity dissipation to the
space variable through a suitable kinetic corrector. The second one
uses the direct coercivity of the velocity Fokker-Planck operator when
the velocity is sufficiently large. Let us briefly present both of these.

The mechanical energy $E$ is naturally adapted to the Hamiltonian transport, since
$\mathsf T E=0$. However,
\begin{equation*}
\mathsf L^\star E
   =d-\lfloor v\rceil^{\beta-2}|v|^2
\end{equation*}
only provides a useful negative drift at large velocities. In
particular, a function of the energy alone does not produce any return
mechanism in the region where $|x|$ is large and $|v|$ remains
moderate. This is a major difficulty. In contrast, a purely spatial function $U(x)$ produces the
non-signed transport term $v\cdot\nabla U$. A Lyapunov function must
therefore couple the space and velocity variables, especially to deal with crossed terms. This is actually reminiscent from the hypocoercive structure of the kinetic operator $\cL$.

An interesting way to cancel a term of the form $v\cdot\nabla U$ is provided by the velocity cell problem
\begin{equation*}
\mathsf L^\star\chi=-v,
\end{equation*}
that would naturally appear when doing macroscopic limits of \eqref{eq:mainFP}. Indeed, for a smooth spatial profile $U$, define
\begin{equation*}
F(x,v):=U(x)+\nabla U(x)\cdot\chi(v).
\end{equation*}
Thanks to the following crucial cancellation, 
\begin{equation*}
v\cdot\nabla U+\nabla U\cdot\mathsf L^\star\chi=0,
\end{equation*}
one may see that
\begin{equation*}
\mathcal L^\star F
 =
v^\top D_x^2U\,\chi
-
\nabla V^\top D_v\chi\,\nabla U.
\end{equation*}
If $U=\Psi(V)$, the second term becomes
\begin{equation*}
-\Psi'(V)\,
 \nabla V^\top D_v\chi\,\nabla V.
\end{equation*}
A positivity estimate on $D_v\chi$ therefore turns the velocity
dissipation into a good negative term, whereas the term containing
$D_x^2U$ is treated as a lower-order remainder. This identity explains
why the asymptotic bounds on both $\chi$ and $D_v\chi$ presented in \Cref{sec:corrector} are essential
for all subsequent constructions.

In our constructions, the corrector cannot in general be used globally in velocity. Its
growth may destroy needed coercivity properties of the Lyapunov function or generate error terms
which are larger than the available dissipation. We therefore repeatedly separate
the small-to-moderate velocity region, where the corrected spatial phase is
effective, from the high-velocity region, where the energy itself is
dissipated by the collision operator. The different constructions
below correspond to different ways of gluing these two mechanisms.

When $\alpha=1$, the quantity $|\nabla V|$ remains of order one at
spatial infinity. A fixed velocity threshold is therefore sufficient.
For $\beta\leq1$, the elementary truncated cross term
$\wx^{-1} x\cdot v$ already creates the missing negative spatial
contribution. For $\beta>1$, we use the cell corrector and combine a
corrected spatial weight with an energy weight. 

The situation is different when $0<\alpha<1$, since
\begin{equation*}
|\nabla V(x)|^2
 \asymp
 \lfloor x\rceil^{-2(1-\alpha)}
 \asymp
 V(x)^{-\frac{2(1-\alpha)}{\alpha}}
\end{equation*}
vanishes at infinity. The velocity scale at which the direct collision
dissipation becomes stronger than the corrected spatial dissipation
must then depend on the position, or equivalently on the total energy.
For $\beta>2$, this leads to a position-dependent threshold
$R(\tau)$, where $\tau=|\nabla V|^2$ at leading order. The auxiliary
kinetic energy is multiplied by a coefficient which is of size $\tau$
at moderate velocities and becomes equal to one at large velocities. For $\beta<2$, we instead use as threshold criterion,
\begin{equation*}
\Theta(x,v)=\frac{\lfloor v\rceil}{E(x,v)^a}.
\end{equation*}
When $\Theta$ is small, one has $E\asymp V$ and the corrected spatial
phase
\begin{equation*}
F=V^{\frac{\beta}{2}}+\nabla(V^{\frac{\beta}{2}})\cdot\chi
\end{equation*}
is appropriate. When $\Theta$ is large, $v$ is large and the phase $E^{\frac{\beta}{2}}$ is
directly dissipated by the velocity operator. We interpolate between
these two phases by means of a function $\zeta(\Theta)$ which is flat
at the origin. 

The logarithmic powers in \Cref{thm:main-lyapunov} directly reflect the degeneration
of the spatial drift. For instance, when $\alpha<1$ and $\beta\geq2$,
the phase is of order $V$ and the drift is of order
$|\nabla V|^2\asymp V^{-\frac{2(1-\alpha)}{\alpha}}$, which yields the exponent
$\frac{2(1-\alpha)}{\alpha}$. When $\beta<2$, the phase is of order
$V^{\frac{\beta}{2}}$ and the spatial drift is of order
\begin{equation*}
V^{\frac{\beta}{2}-1}|\nabla V|^2,
\end{equation*}
which gives
\begin{equation*}
\sigma
 =
\frac{
1-\frac{\beta}{2}
+\frac{2(1-\alpha)}{\alpha}
}{
\frac{\beta}{2}
}.
\end{equation*}

The evolution equation is supplemented with an initial datum $f_0\in\sfL^1$. The equation is mass conservative, that is,
\begin{equation*}
\int_{\R^d \times \R^d} f\dd x \dd v  = \int_{\R^d \times \R^d} f_0\dd x \dd v ,\qquad\text{for all } t\geq 0.
\end{equation*}
A next important result is existence and uniqueness of a steady state $G$ to \eqref{eq:mainFP} and convergence of solutions towards $G$. In a a situation where $G$ is not likely to be explicit, an important approach is the Harris theorem. The seminal works for such results are due to Doeblin \cite{Doeblin1940} and Harris \cite{H56}. Doeblin established exponential convergence for Markov processes under the assumption of uniformly positive transition probabilities (the so-called Doeblin theorem \cite{CM21,EY23}), while Harris provided sufficient conditions for the existence of a unique stationary measure. We refer to \cite{BouinZiviani} for a review of recent or not-so-recent bibliography about applications and development of Harris type theorems to partial differential equations. 

The main hypotheses of the Harris theorem are the following.
\begin{hypothesis}[Weak Lyapunov condition]\label{Hyp:Lyapunov}
There exists a continuous function $m\colon \R^d\times \R^d \to [1,+\infty)$ with pre-compact level sets such that 
\begin{equation}
\cL^{\star} m\leq C\1_{B_R} -\phi(m) 
\end{equation}
for some constants $C,R>0$ and some strictly concave function $\phi\colon\R_+\to\R$ with $\phi(0)=0$ and increasing to infinity.
\end{hypothesis}

\begin{hypothesis}[Positivity condition]\label{Hyp:minorisation}
We say that the stochastic semigroup $S_\cL$ satisfies the positivity condition on a set $\mathcal{C}$ if there exists a probability measure $\mu_*$ and a constant $\eta\in(0,1)$  such that for a certain $T>0$
\begin{equation}\label{con:minorisation}
S_{\cL}(T)\mu \geq \eta\mu_*\int_\mathcal{C}\mu 
\end{equation}
for all positive measures $\mu$.
\end{hypothesis}

We now give the statement of the Harris theorem we shall use and we refer to \cite[Thm 5.6]{CM21} for the details of the proof.
\begin{theorem}[Sub-geometric Harris Theorem] \label{thm:harris}
Consider a stochastic semigroup $S_\cL$ with generator $\cL$ that satisfies \Cref{Hyp:Lyapunov} for a continuous function $m\colon \RRd\to[1,\infty)$ and \Cref{Hyp:minorisation} in a set $\mathcal{C}=\{(x,v)\in\R^d\times\R^d\;\colon\; m(x,v)\leq M \}$ for large enough $M$. Then there exists a unique invariant measure $\mu_G\in \mathcal{P}(\R^d\times\R^d)$ such that
\begin{equation}\label{Harris:L1}
    \int_{\R^d\times\R^d}\phi(m)d\mu_G<\infty
\end{equation}
and there exists a decay-rate function $\Theta(t)$ such that
\begin{equation}\label{Harris:rate}
    \norm{S_{\cL}(t)\mu-\mu_G}_{TV}\lesssim \Theta(t)\footnote{We use the notation $\mathsf a\lesssim \mathsf b$ if there exist a constant $\mathsf c>0$ such that $\mathsf a\leq \mathsf{c}\,\mathsf{b}$.}\norm{\mu-\mu_G}_m
\end{equation}
for any probability measure $\mu$. 
\end{theorem}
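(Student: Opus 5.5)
The statement just above is the sub-geometric Harris theorem, which the paper takes from \cite[Thm 5.6]{CM21}. The plan is therefore not to reprove it but to explain how it reduces to the abstract result there and what has to be checked. I would split the argument into three steps: a weak Lyapunov drift for the dual semigroup, a coupling/contraction estimate in a family of weighted total-variation distances, and an interpolation argument that turns the concave drift $\phi$ into an explicit rate $\Theta$.

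The first step is an integrated Lyapunov inequality. By duality, \Cref{Hyp:Lyapunov} gives, for any probability measure $\mu$,
\begin{equation*}
\frac{\dd}{\dd t}\int m \dd(S_\cL(t)\mu)\le C-\int \phi(m)\dd(S_\cL(t)\mu).
\end{equation*}
Since $\phi$ is concave with $\phi(0)=0$, Jensen's inequality controls $\int\phi(m)$ from below by $\phi(\int m)$ up to the constant part. This gives a uniform-in-time bound on $\int m\,\dd(S_\cL(t)\mu)$ in terms of $\int m\,\dd\mu$.

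The second step is contraction. The positivity condition holds on $\mathcal C=\{m\le M\}$, which is compact because $m$ has pre-compact level sets. Combined with the drift, the argument of Hairer--Mattingly (in the version of \cite{CM21}) gives strict contraction of $S_\cL(T)$ in the distance $\norm{\cdot}_{1+\gamma\phi(m)}$ for a suitable small $\gamma$, or in a family of such weights. This is done by splitting into the cases $m\le M$, where the minorisation acts, and $m>M$, where $\phi(m)$ is large and the drift decreases the weight. The existence and uniqueness of $\mu_G$ then follow from Banach's fixed-point theorem for $S_\cL(T)$ together with time averaging, and $\int\phi(m)\dd\mu_G<\infty$ follows from the integrated drift.

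The third step, which I expect to be the main obstacle, is extracting the rate $\Theta$. Because $\phi$ is sublinear, a single weight does not close, and one has to interpolate between the weights $1$ and $m$. Following Douc--Fort--Guillin, one introduces $H_\phi(u)=\int_1^u\dd s/\phi(s)$ and sets $\Theta(t)=1/\phi\circ H_\phi^{-1}(t)$. Contraction in the weight $\phi(m)$ is then traded against the bound on $m$ from the first step, by splitting the measure at a level $m\approx H_\phi^{-1}(t)$. For $\phi(m)=m/(\ln(e+m))^{\sigma}$, as in \Cref{thm:main-lyapunov}, this produces a stretched exponential rate $\Theta(t)\sim e^{-c t^{1/(1+\sigma)}}$. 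Since all of this is carried out in \cite{CM21}, in the paper I would only verify the hypotheses: $m\ge1$ is continuous with compact sublevel sets, and $\phi$ is strictly concave and increasing to infinity, after modifying $m/(\ln(e+m))^\sigma$ near $0$ if necessary. Then I would cite the theorem.
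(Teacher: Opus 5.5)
Your top-level move matches the paper. The paper does not prove \Cref{thm:harris}: it refers to \cite[Thm 5.6]{CM21} for the statement and to \cite[Sec. 4]{CM21} for $\Theta$, so checking the hypotheses and citing is enough. Your computation that $1/\phi\circ H_\phi^{-1}$ gives $e^{-\lambda t^{1/(1+\sigma)}}$ for $\phi(m)=m/(\ln(e+m))^\sigma$ also agrees with the rate the paper quotes. The sketch of the mechanism, however, is wrong in two places.

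First, Jensen's inequality goes the other way. For concave $\phi$ one has $\int\phi(m)\,\dd\nu\le\phi\bigl(\int m\,\dd\nu\bigr)$, so it gives no lower bound on the dissipation. The conclusion you draw, a uniform-in-time bound on $\int m\,\dd(S_\cL(t)\mu)$, is false in general. Since $m$ is continuous and $S_\cL(t)\mu\to\mu_G$, we get $\liminf_{t\to\infty}\int m\,\dd(S_\cL(t)\mu)\ge\int m\,\dd\mu_G$, whereas the theorem only gives $\int\phi(m)\,\dd\mu_G<\infty$. A concrete example: a birth--death chain with up-rates $1$ and down-rates $1+3/n$ has $\pi_n\asymp n^{-3}$, and $m(n)=n^3$ satisfies the weak drift with $\phi(m)\propto m^{1/3}$, yet $\sum_n\pi_n m(n)=\infty$. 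What the drift actually gives is
\begin{equation*}
 \int m\,\dd(S_\cL(t)\mu)+\int_0^t\!\!\int\phi(m)\,\dd(S_\cL(s)\mu)\,\dd s\le\int m\,\dd\mu+Ct,
\end{equation*}
that is, a possibly linearly growing $m$-moment plus a time-averaged control of $\phi(m)$. The latter yields tightness of time averages, hence existence of $\mu_G$ (e.g.\ by Krylov--Bogoliubov) and $\int\phi(m)\,\dd\mu_G\le C$.

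Second, $S_\cL(T)$ is in general not a strict contraction in $\norm{\cdot}_{1+\gamma\phi(m)}$. Iterating would give $\norm{S_\cL(nT)(\mu-\nu)}_{TV}\le\rho^n\norm{\mu-\nu}_{1+\gamma\phi(m)}$, i.e.\ geometric convergence. The same birth--death chain rules this out: started from $n$, it needs a time of order $n$ to come back, while $\phi(m(n))\asymp n$. The reason is that $\phi(m)$ is not a geometric Lyapunov weight. Here, for instance, $\cL^\star\phi(m)\le\phi'(m)\,\cL^\star m\le\phi'(m)\bigl(C\1_{B_R}-\phi(m)\bigr)$, with $\phi'(m)\le\phi(m)/m\to0$. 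This claim also contradicts your own third step, where there would be nothing left to interpolate.

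Since the drift lowers $m$ only by $\phi(m)$, not by a fixed fraction of $m$, the minorisation/coupling step yields only a \emph{weak} contraction. Roughly: the $(1+\gamma m)$-weighted distance between two solutions does not increase over a time $T$, and it decreases by an amount comparable to their $(1+\gamma\phi(m))$-weighted distance. The rate $\Theta$ comes from iterating this non-uniform gain and interpolating. In the Douc--Fort--Guillin/Hairer route it comes instead from return-time bounds of the form $\mathbb{E}_{(x,v)}\bigl[H_\phi^{-1}(\tau_{\mathcal C})\bigr]\lesssim m(x,v)$. In the interpolation route, the quantity kept bounded uniformly in time is this weighted distance between two solutions, produced by the weak contraction itself, not $\int m\,\dd(S_\cL(t)\mu)$.

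Your final citation of \cite{CM21} is fine; the sketch should be corrected along these lines or dropped.
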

The function $\Theta(t)$ can be explicitly computed from the concave function $\phi$ appearing in the Lyapunov condition, for all the details we refer to \cite[Sec. 4]{CM21}. 
In this work, since $\phi(m)=m/(\ln(e+m)^\sigma$ for some $\sigma>0$, \cite[Sec. 4]{CM21} tells that the function $\Theta(t)$ is given by
\begin{equation*}
\Theta(t)=e^{-\lambda t^{\frac{1}{1+\sigma}}}
\end{equation*}
for an explicitly computable constant $\lambda>0$. Observe that formally, the function $m \mapsto \phi(m)=m/(\ln(e+m))^\sigma$ is not \textit{everywhere} concave. However, it is concave for $m$ large, and this does not change anything to the conclusions since one can modify $\phi$ so as to make is strictly concave up to modifying the compact in the Lyapunov statement. In order to keep expressions simple we keep this as it is. The positivity condition of \Cref{Hyp:minorisation} has been derived through Harnack-type estimates in \cite[Section 3]{BouinZiviani}; we thus shall refer to this since the proof there only uses the local boundedness of $\nabla_x V - \wv^{\beta-2} v$, which is true here. A consequence of \Cref{thm:main-lyapunov} is the following result.

\begin{theorem}\label{thm:main-Harris}
 For any given $\alpha \in (0,1]$ and $\beta >0$, take $m$ given by \Cref{thm:main-lyapunov}. There exists a positive normalised steady state $G\in \sfL^1\left(\phi(m) \right)$. Moreover, let $f$ be a solution to \eqref{eq:mainFP}, with initial data $f_0$. Then, for any $f_0\in\sfL^1(m)$,
\begin{equation*}
    \left\Vert f- \left(\int_{\R^d \times \R^d} f_0\right)G\right\Vert_{\sfL^1}\lesssim 
    e^{-\lambda t^\gamma} \left\Vert f_0- \left(\int_{\R^d \times \R^d} f_0 \right) G\right\Vert_{\sfL^1(m)},
\end{equation*}
where 
\begin{equation*}
    \gamma := 
    \begin{cases}
\min \left(1,\frac{\beta}{2} \right), & \text{when } \alpha = 1 \text{ and } \beta >0,\\[5pt]
\frac{\alpha}{2-\alpha} & \text{when } \alpha < 1 \text{ and } \beta \geq 2,\\[5pt]
\frac{\alpha\beta}{2(2-\alpha)} & \text{when } \alpha < 1 \text{ and } \beta < 2.\\
    \end{cases}
\end{equation*}
\end{theorem}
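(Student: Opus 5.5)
The plan is to derive \Cref{thm:main-Harris} directly from the sub-geometric Harris theorem (\Cref{thm:harris}), taking the Lyapunov weights from \Cref{thm:main-lyapunov}. Three things must be checked: \Cref{Hyp:Lyapunov} holds for the weights $m$ of \Cref{thm:main-lyapunov} with a suitable concave $\phi$; \Cref{Hyp:minorisation} holds on sublevel sets $\{m\le M\}$; and the decay function $\Theta(t)$ produced by \cite[Sec. 4]{CM21} has the form $e^{-\lambda t^{\gamma}}$ with the stated $\gamma$.

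\textbf{Step 1 (Lyapunov hypothesis).} In each regime \Cref{thm:main-lyapunov} gives $\cL^\star m\le C\one_{\mathcal K}-c\,m/(\ln(e+m))^{\sigma}$. The exponent is $\sigma=\frac{1-\theta}{\theta}$ with $\theta=\min(1,\beta/2)$ when $\alpha=1$. It is $\sigma=\frac{2(1-\alpha)}{\alpha}$ when $\alpha<1$, $\beta\ge2$, and $\sigma$ is as in item (3) otherwise. I will check that $m$ may be taken continuous, $\ge1$ (adding a constant only enlarges the compact term) and coercive, so its level sets are precompact. Since $\mathcal K$ is compact, it lies in some ball, so $\one_{\mathcal K}\le\one_{B_R}$. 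The function $\phi(s)=c\,s/(\ln(e+s))^\sigma$ is increasing to infinity and concave for large $s$. I replace it on a bounded interval $[0,s_0]$ by a strictly concave function below it with $\phi(0)=0$. The error is bounded on $\{m\le s_0\}$, which is compact by coercivity, and is absorbed by enlarging $C$ and $R$.

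\textbf{Step 2 (positivity).} For every $M$ the set $\mathcal C=\{m\le M\}$ is compact. The Harnack-based minorisation of \cite[Section 3]{BouinZiviani} applies on any compact set, because it only uses local boundedness of the drift $\nabla_xV-\wv^{\beta-2}v$. Here $V=\wdot^\alpha/\alpha$ is smooth with bounded gradient, so this holds. \Cref{thm:harris} then yields a unique invariant probability $\mu_G$ with $\int\phi(m)\dd\mu_G<\infty$. Parabolic hypoelliptic regularity (Hörmander) for the stationary equation $\cL G=0$ shows that $\mu_G$ has a positive smooth density $G\in\sfL^1(\phi(m))$; positivity also follows from the minorisation. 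By linearity and mass conservation, the TV estimate applied to $f_0$ and $(\int f_0)G$, split into positive and negative parts, gives the $\sfL^1$ contraction with $\|\cdot\|_m=\sfL^1(m)$.

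\textbf{Step 3 (rate).} By \cite[Sec. 4]{CM21}, $\phi(m)=m/(\ln(e+m))^\sigma$ gives $\Theta(t)=e^{-\lambda t^{1/(1+\sigma)}}$, so it remains to compute $\gamma=1/(1+\sigma)$ in each case:
\begin{itemize}
\item For $\alpha=1$: $1+\sigma=1/\theta$, hence $\gamma=\min(1,\beta/2)$.
\item For $\alpha<1$, $\beta\ge2$: $1+\frac{2(1-\alpha)}{\alpha}=\frac{2-\alpha}{\alpha}$, hence $\gamma=\frac{\alpha}{2-\alpha}$.
\item For $\beta<2$: $1+\sigma=\frac{2}{\beta}\bigl(1+\frac{2(1-\alpha)}{\alpha}\bigr)=\frac{2(2-\alpha)}{\alpha\beta}$, hence $\gamma=\frac{\alpha\beta}{2(2-\alpha)}$.
\end{itemize}

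The only delicate point is Step 1's regularisation of $\phi$ into a globally strictly concave function while keeping the rate unchanged. The rate is governed by the behaviour of $\phi$ at infinity through $\int^{u}\frac{ds}{\phi(s)}$, and this is insensitive to changes on compacts, so I expect it to be routine. Everything substantive has already been done in \Cref{thm:main-lyapunov}.
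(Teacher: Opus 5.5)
Your proposal is correct and follows the paper's own route: the paper also obtains \Cref{thm:main-Harris} by feeding the weights of \Cref{thm:main-lyapunov} into the sub-geometric Harris theorem, citing \cite[Section 3]{BouinZiviani} for the positivity condition, modifying $\phi(s)=s/(\ln(e+s))^\sigma$ on a compact set to make it strictly concave, and reading off $\Theta(t)=e^{-\lambda t^{1/(1+\sigma)}}$ from \cite[Sec. 4]{CM21}, and your three computations of $\gamma=1/(1+\sigma)$ are right. The one caveat is the endpoint $\sigma=0$ ($\alpha=1$, $\beta\ge2$, $\theta=1$), where $\phi$ is linear and cannot be made strictly concave, so there you should invoke the classical geometric Harris theorem instead, as the paper implicitly does after \Cref{thm:alpha1-betagt1}.
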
 
We believe that the rates for $\beta \geq 2$ and any $\alpha$ are optimal. We improve the result by Cao  \cite{Cao2019}, the sharp value $\frac{2(1-\alpha)}{\alpha}$ being reachable when $\beta = 2$ and $\alpha \leq 1$. Observe that it is claimed here that $G$ is in fact a function and not a measure. This is coming from the fact that the Fokker-Planck operator $\cL$ has regularising properties. 

We believe that approaches of the present paper could be used to handle (part of) the case of a polynomial local equilibrium of the form
\begin{equation*}
    \cM \propto \wv^{-d-\gamma},
\end{equation*}
with $\gamma > 0$. In this case, the velocity corrector $\chi$ is actually miraculously explicit:
\begin{equation*}
 \chi(v):=\frac{(3+|v|^2)v}{d+3\gamma-4}, \qquad\textrm{ so that} \qquad  \nabla_v\chi(v)
 =\frac{(3+|v|^2)\textbf{I}  +2v\otimes v}{d+3\gamma-4},
\end{equation*}
and hence, for every $\xi\in\R^d$,
\begin{equation}\label{eq:Dchi-bounds}
 \frac{\vert v \vert^2}{d+3\gamma-4}|\xi|^2
 \le \xi\cdot\nabla_v\chi(v)\xi
 \le \frac{3\vert v \vert^2}{d+3\gamma-4}|\xi|^2.
\end{equation}
Thus, $\nabla_v\chi$ is positive definite if and only if $d+3\gamma-4 >0$, which gives a restriction on $\gamma$. We leave for a future work the full proofs in this situation.

It is worth mentioning that similar strategies are also interestingly used by the author, Dolbeault and Ziviani \cite{bouindolbeaultziviani2026} to show propagation of weighted energies in factorised kinetic Fokker-Planck equations. 

The remainder of the paper is organised as follows. The following \Cref{sec:corrector} is an independent interlude that describes solutions to an auxiliary problem in velocity that is fundamental to derive Lyapunov functionals later. In \Cref{sec:Lyapunov1}, we separately tackle the cases $\alpha =1$ and $\beta \leq 1$ that do not require the corrector of \Cref{sec:corrector} and show the importance of a cut-off in velocity.  \Cref{sec:Lyapunov2} treats the case $\alpha =1$ and $\beta > 1$. The remaining sections address the more difficult regime $\alpha < 1$. In \Cref{sec:Lyapunov3}, we describe our approach for $\beta >2$, then in \Cref{sec:Lyapunov4} we focus on the case $\beta <2$. We revisit the case $\beta = 2$ in the last \Cref{sec:Lyapunov5}. Except for the velocity corrector introduced in \Cref{sec:corrector}, the notation used in each
Lyapunov construction is local to the corresponding section unless stated otherwise.

\section{Interlude: the velocity cell corrector}\label{sec:corrector}

In this section, we examine the following problem in the velocity variable, 
\begin{equation}\label{eq:Poisson}
 \sfL^\star\chi=-v\qquad\text{in }\R^d.
\end{equation}
supplemented with a normalisation condition, 
\begin{equation}\label{eq:normalization}
 \int_{\R^d}\chi(v)\, \cM(v) \dd v=0.
\end{equation}
The equation is understood component-wise.

\begin{definition}\label{def:weak}
A function $\chi\in H^1(\cM(v) \dd v;\R^d)$ is a weak solution of
\eqref{eq:Poisson} if, for every $i\in\{1,\dots,d\}$ and every
$\varphi\in C_c^\infty(\R^d)$,
\begin{equation}\label{eq:weak}
 \int_{\R^d}\nabla\chi_i\cdot\nabla\varphi\,\cM(v) \dd v
 =\int_{\R^d}v_i \,  \varphi\,\cM(v) \dd v.
\end{equation}
\end{definition}
We now state the main result. The constants implicit in $O(\cdot)$ and
$\asymp$ may depend on $d$ and $\beta$, but never on the radial variable
$r$.
\begin{theorem}[Velocity cell problem : Existence, uniqueness, and asymptotics]\label{thm:cell}

For every $d\ge1$ and $\beta>0$, problem \eqref{eq:Poisson}-\eqref{eq:normalization}
has a unique solution $\chi\in H^1(\cM(v) \dd v;\mathbb R^d)$. This solution is smooth,
odd and $O_d(\R)$-equivariant. Hence
\begin{equation*}
 \chi(v)=p_\beta(|v|)v
       =g_\beta(|v|)\frac{v}{|v|},
 \qquad g_\beta(r):=r p_\beta(r),
\end{equation*}
with the usual smooth interpretation at $v=0$. Moreover,
for constants depending only on $(d,\beta)$,
\begin{equation}\label{eq:cell-growth-short}
 |\chi(v)|\le A_\beta(\wv) := C_{d,\beta}
 \begin{cases}
  \lfloor v\rceil^{3-\beta},&0<\beta<3,\\
  \ln(e+\lfloor v\rceil),&\beta=3,\\
  1,&\beta>3,
 \end{cases}
\end{equation}
and
\begin{equation}\label{eq:cell-gradient-upper-short}
 |D_v\chi(v)|\le
 \begin{cases}
  C_{d,\beta}\lfloor v\rceil^{2-\beta},&0<\beta<2,\\
  1,&\beta\ge2.
 \end{cases}
\end{equation}
Most importantly, $D_v\chi(v)$ is symmetric positive definite and there is a constant $c_\chi$ such that
\begin{equation}\label{eq:cell-coercivity}
 \xi\cdot D_v\chi(v)\xi
 \ge c_{\chi}\lfloor v\rceil^{2-\beta}|\xi|^2,
 \qquad v,\xi\in\mathbb R^d,
\end{equation}
Finally, when $\beta=2$, one has exactly $\chi(v)=v$.
\end{theorem}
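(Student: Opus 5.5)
The plan is to reduce the vector problem to a scalar radial ODE, get existence and uniqueness variationally, and read off the bounds \eqref{eq:cell-growth-short}--\eqref{eq:cell-coercivity} from an integrated form of that ODE.

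\emph{Existence, uniqueness, symmetry.} I would first build a weak solution in the sense of \Cref{def:weak} by minimising
\[
J(\chi)=\tfrac12\int_{\R^d}|\nabla\chi|^2\cM \dd v-\int_{\R^d}v\cdot\chi\,\cM \dd v
\]
over the closed subspace of odd fields in $H^1(\cM;\R^d)$. Oddness already enforces \eqref{eq:normalization}.

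Coercivity of $J$ does not follow from a Poincaré inequality for $\cM$, which fails when $\beta<1$. Instead I need a weighted inequality
\[
\int |v|\,|\chi|\,\cM\lesssim \Big(\int|\nabla\chi|^2\cM\Big)^{1/2}
\]
for odd $\chi$. I would prove it by a one-dimensional Hardy (Muckenhoupt) argument along rays. The relevant Muckenhoupt constant is controlled by
\[
\int_r^\infty s^{d}\cM(s)\dd s\Big/ \big(r^{d-1}\cM(r)\big)\asymp \wr^{2-\beta}
\]
(Laplace asymptotics); this is the key asymptotic ratio that reappears below. Lax--Milgram or the direct method then gives $\chi$. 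Uniqueness follows because the difference of two solutions has zero Dirichlet energy, hence is constant, and \eqref{eq:normalization} forces that constant to be $0$. Smoothness is standard elliptic regularity, since $\sfL^\star$ has smooth coefficients.

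Uniqueness together with the invariance of $\sfL^\star$ and of $v$ under $O_d(\R)$ gives $\chi(Rv)=R\chi(v)$, and hence $\chi(v)=g(r)v/r$. For $\beta=2$ one checks directly that $\sfL^\star v=-v$, so uniqueness gives $\chi(v)=v$.

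\emph{Radial ODE and bounds.} Write $\cM=\cM(r)$ by abuse of notation. Plugging $\chi(v)=g(r)v/r$ into \eqref{eq:Poisson} gives
\[
\big(r^{d-1}\cM g'\big)'-(d-1)r^{d-3}\cM g=-r^{d}\cM,\qquad g(0)=0.
\]
Integrating from $r$ to $\infty$, using decay of the boundary term from the $H^1(\cM)$ control, yields
\[
g'(r)=\frac{1}{r^{d-1}\cM(r)}\int_r^\infty\Big(s^d-(d-1)s^{d-2}\tfrac{g(s)}{s}\Big)\cM(s)\dd s .
\]

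The eigenvalues of $D_v\chi$ are $g'(r)$ in the radial direction and $g(r)/r$ (multiplicity $d-1$) tangentially, so symmetry of $D_v\chi$ is automatic. Once I know $0\le g(s)/s\lesssim \ws^{2-\beta}$, the correction term $(d-1)s^{d-2}g/s$ in the integrand is of lower order, namely $s^{d-\beta}$ against $s^d$, at large $s$. The Laplace asymptotics then give $g'(r)\asymp\wr^{2-\beta}$ for large $r$. Integrating this gives $g\lesssim\wr^{3-\beta}$, $\ln(e+\wr)$, or $1$ according to $\beta<3$, $\beta=3$, $\beta>3$, which is \eqref{eq:cell-growth-short}. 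It also gives \eqref{eq:cell-gradient-upper-short}, using that $g/r\le\sup g'$.

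For the lower bound in \eqref{eq:cell-coercivity}, positivity of $g'$ gives $g(r)/r=\frac1r\int_0^r g'\ge c\,\wr^{2-\beta}$. For $\beta\le 2$ this is immediate from $g'\gtrsim \wr^{2-\beta}$. For $\beta>2$ it still holds, since the right-hand side $\wr^{2-\beta}$ is then smaller than the $1/r$ produced by the bounded limit of $g$.

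\emph{Main obstacle.} The hard step, for $d\ge2$, is the sign information: $g>0$ and $g'>0$ on $(0,\infty)$, together with the a priori bound $g(r)/r\lesssim\wr^{2-\beta}$ that the argument above used circularly. I would first try a maximum principle on the ODE. The function $g$ satisfies a second-order equation with a negative zero-order coefficient and a negative source, so $g\ge0$ follows by comparison with $0$.

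Next I would differentiate the ODE and derive the equation satisfied by the pair $(g', g/r)$. Their difference $w=g'-g/r$ satisfies
\[
w'=-\tfrac{(d-1)}{r}\,w-\wr^{\beta-2}r\,g'+\ldots,
\]
and I would use it to exclude an interior zero of $g'$ at a first point where $g'$ vanishes. For this I need that at such a point $g''\le0$ while the equation forces $g''>0$. If this comparison fails in some $\beta$-regime, the fallback is a barrier argument with explicit sub- and supersolutions of the form $c\,r\wr^{2-\beta}$ near infinity, glued to the small-$r$ behaviour $g\sim r/d$.
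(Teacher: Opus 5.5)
Your overall architecture (radial reduction, the integrated formula for $g'$, Laplace asymptotics) matches the paper's, and your variational existence argument is a workable alternative. However, the step you yourself single out, $g'>0$ on all of $(0,\infty)$ when $d\ge2$, is genuinely missing, and the mechanism you propose cannot supply it. The step is needed: the eigenvalues of $D_v\chi$ are $g'$ and $g/r$, and your asymptotics give $g'\asymp\wr^{2-\beta}$ only for large $r$. Without it you have neither positive definiteness nor \eqref{eq:cell-coercivity} at bounded $v$.

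At a first zero $r_0$ of $g'$, the equation only gives $g''(r_0)=(d-1)g(r_0)/r_0^2-r_0$, and nothing controls the sign of $(d-1)g(r_0)-r_0^3$. Along the exact solution $g=r$ for $\beta=2$, this expression is $(d-1)/r-r$, which changes sign. Your relation for $w=g'-g/r=rp'$ (correct form: $w'=-\frac dr w+r\wr^{\beta-2}g'-r$) returns the same identity at $r_0$, and the barrier fallback is unspecified.

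The missing idea is a maximum principle for the \emph{differentiated second-order} equations. Differentiating the $g$-equation, $w:=g'$ solves
\begin{equation*}
w''+\Big(\frac{d-1}{r}-r\wr^{\beta-2}\Big)w'-\Big(\frac{2(d-1)}{r^2}+\wr^{\beta-4}\big(1+(\beta-1)r^2\big)\Big)w=-1-\frac{2(d-1)}{r^3}\,g .
\end{equation*}
For $\beta\ge1$ the coefficient of $w$ is negative and the right-hand side is $\le-1$ (as $g\ge0$), so $g'$ has no interior minimum with nonpositive value. Since $g'(0)=p(0)>0$ and $g'>0$ for large $r$, you get $g'>0$. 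For $\beta<2$, the function $u:=p'$ with $p=g/r$ solves
\begin{equation*}
u''+\Big(\frac{d+1}{r}-r\wr^{\beta-2}\Big)u'-\Big(\frac{d+1}{r^2}+\wr^{\beta-4}(2+\beta r^2)\Big)u=(\beta-2)\,r\wr^{\beta-4}\,p\le0 .
\end{equation*}
Since $u(0)=0$ and $r^2u=rg'-g\sim\frac{2-\beta}{3-\beta}r^{3-\beta}>0$ at infinity, $p'\ge0$, whence $g'=p+rp'\ge p>0$. These two ranges cover all $\beta>0$. The paper is terse here as well: it gets $p>0$ from the maximum principle and then simply asserts \eqref{eq:cell-coercivity}.

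There are also some smaller points. First, the circularity you fear is absent, and the paper avoids it exactly this way. Once $g\ge0$, the correction term in your formula for $g'$ is nonpositive, so $g'(r)\le\int_r^\infty s^d\cM\dd s/(r^{d-1}\cM(r))\lesssim\wr^{2-\beta}$ for $r\ge1$. Integrating gives $g\lesssim A_\beta(\wr)$, which already makes the correction of lower order and yields $g'\gtrsim\wr^{2-\beta}$ at infinity. The bound $g/r\lesssim\wr^{2-\beta}$ you ask for is actually false for $\beta>3$, where $g\to g_\infty>0$. For $2<\beta<3$, $g$ is unbounded, so the lower bound on $g/r$ there must come from integrating $g'\gtrsim\wr^{2-\beta}$, not from a bounded limit.

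Second, the statement claims $|D_v\chi|\le1$ for $\beta\ge2$, while your argument only gives $C_{d,\beta}$. The sharp constant follows from $p\le1$ (the constant $1$ is a supersolution because $\wr^{\beta-2}\ge1$) and $p'\le0$ (the $u$-equation has a positive right-hand side when $\beta>2$), so $0<g'\le p\le1$.

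Third, for existence the paper reads the $p$-equation as the radial form of $-\nabla\cdot(\cM\nabla P)+\wx^{\beta-2}\cM P=\cM$ on $\R^{d+2}$. There the zero-order term gives coercivity outright, $\int\wx^{2-\beta}\cM<\infty$ gives continuity, and testing with $P^-$ gives $p\ge0$, so no Hardy inequality is needed. Your route also works, but when $d\ge2$ the ray-wise Muckenhoupt argument must start from a sphere and be completed by a local Poincar\'e inequality. This is because $\int_r^\infty s^d\cM\dd s/(r^{d-1}\cM(r))$ blows up like $r^{1-d}$ as $r\to0$.
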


\begin{proof}[{\bf Proof of \Cref{thm:cell}}]

We seek a smooth solution of the form $\chi(v):=p_\beta(|v|)v$ satisfying 
$\mathsf L^\star\chi=-v$. Observe that $p_\beta$ should satisfy
\begin{equation}\label{eq:p-ode-short}
 p_\beta''+\left(\frac{d+1}{r}-r \wr^{\beta-2}\right)p_\beta'
 -\wr^{\beta-2}p_\beta=-1,
\end{equation}
if $r$ is the variable $\vert v \vert$. Observe interestingly that writing $d+1 = (d+2)-1$, the latter equation may be seen as the equation satisfied by a radial solution to 
\begin{equation*}
 \int_{\mathbb R^{d+2}}
 \bigl(\nabla P\cdot\nabla\varphi+\wx^{\beta-2} P\varphi\bigr) \cM(x) \dd x
 =\int_{\mathbb R^{d+2}} \varphi \cM(x) \dd x.
\end{equation*}
The left-hand side is coercive and the right-hand side is continuous because
$\wv^{2-\beta} \cM(v)$ is integrable. The Lax--Milgram theorem, rotational
invariance and the maximum principle give a unique smooth positive radial
solution $P(x)=p_\beta(|x|)$. 

For $g_\beta=rp_\beta$ and
$\rho_\beta(r):=r^{d-1}e^{- \frac{\wr^\beta}{\beta}}$, one has, after rewriting under divergence form,
\begin{equation*}
 -\bigl(\rho_\beta g_\beta'\bigr)'
 +(d-1)\frac{\rho_\beta}{r^2}g_\beta=r\rho_\beta.
\end{equation*}
The energy estimate associated with the preceding variational problem gives
$\rho_\beta(r)g_\beta'(r)\to0$. Hence
\begin{equation*}
 g_\beta'(r)=r^{1-d}e^{ \frac{\wr^\beta}{\beta}}\int_r^\infty s^{d}e^{- \frac{\ws^\beta}{\beta}}\,ds
 -(d-1)r^{1-d}e^{ \frac{\wr^\beta}{\beta}}\int_r^\infty
       s^{d-3}e^{- \frac{\ws^\beta}{\beta}}g_\beta(s)\,ds.
\end{equation*}
A one-dimensional Laplace estimate first gives an upper bound on $g_\beta$
and then shows that the second term is lower order. This yields
\eqref{eq:cell-growth-short} and \eqref{eq:cell-gradient-upper-short}.

Observe that
\begin{equation*}
 D_v \chi = \left( \textbf{I} - \frac{v \otimes v}{\vert v \vert^2} \right) \frac{g(\vert v\vert)}{\vert v\vert} + \left( \frac{v \otimes v}{\vert v \vert^2} \right)g'(\vert v\vert)
\end{equation*}
so it is symmetric, and its eigenvalues are explicit, 
\begin{equation*} p_\beta(r)=\frac{g_\beta(r)}r, \qquad \text{ and }
 \qquad g_\beta'(r).
\end{equation*}
Previous estimates happen to give \eqref{eq:cell-coercivity}.
\end{proof}

\section{The case \texorpdfstring{$\alpha =1$}{alpha =1} and \texorpdfstring{$\beta \leq 1$}{betaleq1}}\label{sec:Lyapunov1}

In this section, we prove \Cref{thm:main-lyapunov} in the case $\alpha =1$ and $\beta \in (0,1]$. For this case, we extend the idea of \cite{BouinZiviani}. Let $\psi\in C_c^\infty([0,\infty))$ be such that
\begin{equation*}
  0\le \psi\le 1,
  \qquad
  \psi=1\text{ on }[0,1],
  \qquad
  \psi=0\text{ on }[2,\infty).
\end{equation*}
For $R\ge2$, define
\begin{equation*}
  \psi_R(y):=\psi\!\left(\frac{y}{R}\right),
  \qquad y\ge0.
\end{equation*}
Then $\psi_R(\wv )=1$ for $\wv \le R$, $\psi_R(\wv )=0$ for $\wv \ge2R$, and
\begin{equation*}
  |\psi_R'|\le \frac{C_\psi}{R},
  \qquad
  |\psi_R''|\le \frac{C_\psi}{R^2}.
\end{equation*}
We define, for $\eps$, $A$ and $R$ to be chosen later, 
\begin{equation}\label{eq:HA-def}
  H=E \left( A + E +
  \varepsilon \,\frac{\psi_R(\wv )}{\wx} \, v \cdot x\right),
\end{equation}
and, to ease readability, we may also use the shorthand
\begin{equation*}
    Q_R(x,v) := \psi_R(\wv )\, \frac{x \cdot v}{\wx}.
\end{equation*}
Finally, for $0<\theta\le\frac{\beta}{2}$, define
\begin{equation*}
  m_\theta :=\exp\!\left(\delta H^{\frac{\theta}{2}}\right).
\end{equation*}

The following theorem establishes \Cref{thm:main-lyapunov} in the present parameter regime.
\begin{theorem}\label{thm:lyap-alpha1-betaleq1}
Let $0<\theta\le\frac{\beta}{2}$.  For $\delta>0$ small enough, $\eps$, $A$ and $R$ well chosen, $m_\theta$ satisfies
\begin{equation}\label{eq:mtheta-lyap}
  \mathcal L^{\star} m_\theta
  \le C\mathbf 1_{\mathcal{K}}-c\,\frac{m_\theta}{(\ln(e+ m_\theta))^{\frac{1-\theta}{\theta}}},
\end{equation}
for some compact set $\mathcal{K}$ and some positive constants $c$ and $C$.
\end{theorem}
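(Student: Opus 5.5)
Computing $\mathcal L^\star H$ is the heart of the argument. From it I will get a pointwise bound of the form
\begin{equation*}
\mathcal L^\star H\le C\mathbf 1_{\mathcal K_0}-c\,E\,\wv^{\beta}\,\mathbf 1_{\{\wv\ge R\}}-c\,\varepsilon E \,\mathbf 1_{\{\wv\le R\}}
\end{equation*}
on the relevant part of phase space. Then $m_\theta=\exp(\delta H^{\theta/2})$ is handled by the chain rule.

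Since $\mathsf T E=0$ and $\mathsf L^\star E=d-\wv^{\beta-2}|v|^2$, I first write
\begin{equation*}
\mathcal L^\star H=(A+2E)\,\mathcal L^\star E+2|\nabla_vE|^2+\varepsilon\bigl(Q_R\,\mathcal L^\star E+E\,\mathcal L^\star Q_R+2\nabla_vE\cdot\nabla_vQ_R\bigr).
\end{equation*}
The transport part of $\mathcal L^\star Q_R$ gives
\begin{equation*}
\psi_R\Bigl(\frac{|v|^2}{\wx}-\frac{(x\cdot v)^2}{\wx^3}-\frac{|x|^2}{\wx^2}\Bigr)+\frac{x\cdot v}{\wx}\,\psi_R'(\wv)\,\frac{v\cdot x}{\wv\,\wx}.
\end{equation*}
Because $\nabla V=x/\wx$ is bounded when $\alpha=1$, the term $-\psi_R|x|^2/\wx^2$ supplies the spatial return force, of size $-1$ for $|x|\ge1$ and $\wv\le R$. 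The other pieces are bounded as follows.
\begin{itemize}
\item The $|v|^2/\wx$ terms are $O(R^2/\wx)$, small when $|x|\gg R^2$.
\item The $\psi_R'$ term is supported in $R\le\wv\le2R$ and is $O(1)$ there.
\item The collision part $\mathsf L^\star Q_R$ is $O(R^{\beta-1})$ for $\beta\le1$, hence bounded.
\end{itemize}
Multiplied by $E$, the good term is $-\varepsilon E\psi_R$. The energy part is $(A+2E)(d-\wv^{\beta-2}|v|^2)+2|v|^2$. For $\wv\ge R$ with $R$ large it is $\le -cE\wv^\beta$, which absorbs the $\varepsilon$-errors on $R\le\wv\le 2R$ provided $\varepsilon\ll R^{\beta}$-dependent constants.

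For $\wv\le R$ the energy part is at most $C(A+E)$, with no sign. This is exactly the main obstacle. Note $\beta\le1$ means $\wv^{\beta-2}|v|^2$ grows sublinearly, so the velocity dissipation alone cannot help at moderate velocities. The term $2E\cdot d$ from $\mathsf L^\star E^2$ is of order $E\sim\wx$, the same order as the gain $-\varepsilon E$. I will therefore not take $H$ literally as written. Following \cite{BouinZiviani}, I will arrange the constants so that the $E$-linear positive contributions at moderate velocity are of the form $C E$ with $C$ compensated. Concretely, I plan to replace the naive bound by the exact identity $\mathsf L^\star E^2=2E(d-\wv^{\beta-2}|v|^2)+2|v|^2$. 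Its positive part $2dE$ comes only from $|v|\lesssim1$, and I will absorb it by choosing the order of parameters: first $R$, then $\varepsilon$, and replacing $E$ by $E^{1+\kappa}$-type refinements if necessary. This step is where I would spend most effort. If it fails, I will modify the cross term to $\varepsilon E^{\,}Q_R$ with coefficient $E^{1/2}$ so that the gain $-\varepsilon E^{3/2}$ dominates the $O(E)$ error.

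Once $\mathcal L^\star H\le C\mathbf 1_{\mathcal K}-cH^{\beta/2}$ holds (using $\wv^\beta E\gtrsim H^{\beta/2}$ at large $v$ and $E\gtrsim H^{1/2}$ at moderate $v$), I compute
\begin{equation*}
\mathcal L^\star m_\theta=m_\theta\Bigl(\tfrac{\delta\theta}{2}H^{\theta/2-1}\mathcal L^\star H+\bigl(\tfrac{\delta\theta}{2}\bigr)^2H^{\theta-2}|\nabla_vH|^2+\tfrac{\delta\theta}{2}\bigl(\tfrac\theta2-1\bigr)H^{\theta/2-2}|\nabla_vH|^2\Bigr).
\end{equation*}
The last term is nonpositive. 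The middle term is $\lesssim\delta^2H^{\theta-2}E^2\wv^2$, which is dominated by $\delta H^{\theta/2-1}E\wv^\beta$ at large $v$ for $\delta$ small, because $\theta\le\beta/2$. This yields $\mathcal L^\star m_\theta\le C\mathbf 1_{\mathcal K}-c\,m_\theta H^{(\theta-1)/2}$. Finally, $H^{1/2}\asymp(\ln m_\theta/\delta)^{1/\theta}$ gives the denominator $(\ln(e+m_\theta))^{(1-\theta)/\theta}$. Coercivity of $m_\theta$ follows from $H\ge E(A+E-\varepsilon R\cdot2)\gtrsim E^2$ for $A$ large.
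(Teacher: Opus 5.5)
\textbf{There is a genuine gap.} It sits at the step you call the heart of the argument: the sign of the coefficient of $E$ on $\{\wv\le R\}$ for $|x|$ large. You leave this unresolved, and both fallbacks you offer fail.

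First, the statement concerns the given $H$, so an unspecified ``$E^{1+\kappa}$ refinement'' proves nothing about $m_\theta$. Second, replacing the cross term by $\varepsilon E^{3/2}Q_R$ breaks the transition annulus. There the $\psi_R'$-terms of $\mathcal L^\star(E^{3/2}Q_R)$, e.g.\ $-\varepsilon E^{3/2}\psi_R'(\wv)(v\cdot x)^2/(\wv\wx^2)$, are of order $\varepsilon E^{3/2}$ and positive wherever $\psi_R'<0$. They are not dominated by $-\varepsilon\psi_RE^{3/2}$ near the outer edge of the cut-off, and with a fixed $R$ the energy dissipation there is only $-cER^\beta$.

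You also misjudge the collision part of the cross term. On $\{\wv\le R\}$ one has $\mathsf L^\star Q_R=-\wv^{\beta-2}\,v\cdot x/\wx$, which is of size $\wv^{\beta-1}$, i.e.\ of order one at moderate velocities; the bound $O(R^{\beta-1})$ only holds in the annulus. Multiplied by $\varepsilon E$, it is of the same order as your gain $-\varepsilon E|x|^2/\wx^2$, so it cannot be discarded as a bounded remainder.

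\textbf{The missing observation.} No modification of $H$ is needed, and this is exactly where $\beta\le1$ enters: $\wv^{\beta-2}|v|<1$ for every $v$. Use $-\eta^2+a\eta\le a-1+2(1-\eta)$ with $\eta=|x|/\wx$ and $a=\wv^{\beta-2}|v|$, together with $1-|x|/\wx\le\wx^{-2}$. Then the coefficient of $E$ on $\{\wv\le R\}$ is at most
\[
2d-2\wv^{\beta-2}|v|^2+\varepsilon\bigl(\wv^{\beta-2}|v|-1\bigr)+\varepsilon\bigl(R^2\wx^{-1}+2\wx^{-2}\bigr),
\]
with $\wv^{\beta-2}|v|-1<0$. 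So one takes $\varepsilon$ \emph{large}, namely
\[
\varepsilon>\varepsilon_*:=\sup_{u\ge0}\bigl(2d-2\lfloor u\rceil^{\beta-2}u^2\bigr)_+/\bigl(1-\lfloor u\rceil^{\beta-2}u\bigr)<\infty .
\]
This makes the first three terms $\le-4\kappa_0$ uniformly in $v$: the $2dE$ term is simply beaten by the return force.

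The remaining parameters are then fixed in order:
\begin{enumerate}
\item $R$, so that $cR^\beta$ dominates the $O(\varepsilon)$ errors in $R\le\wv\le2R$;
\item $A\ge4\varepsilon R$, giving $H\asymp E^2+AE$;
\item $X_0$, to absorb $\varepsilon R^2/\wx$ and the remainders $C_{A,\varepsilon,R}$.
\end{enumerate}
Your order ``first $R$, then $\varepsilon$'' can work only if $R$ is taken large in terms of the lower threshold $\varepsilon_*$, which you never identify.

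A smaller point: reducing to $\mathcal L^\star H\le C\mathbf 1_{\mathcal K}-cH^{\beta/2}$ loses too much, since it yields the exponent $(2-\theta-\beta)/\theta$. To reach $(1-\theta)/\theta$ you must keep $-cE\asymp-cH^{1/2}$ outside the compact set. Your chain-rule treatment of the $|\nabla_vH|^2$ term is otherwise correct.
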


\subsection{Preliminary computations}

\begin{lemma}\label{lem:exact-formula}
For $H$ defined in \eqref{eq:HA-def},
\begin{align*}\label{eq:LHA-exact}
  \mathcal L^{\star} H 
  &=
  2 \left(d-\wv ^{\beta-2}|v|^2\right) E +\varepsilon \psi_R(\wv )\left\{ \frac{|v|^2-\wx^{-2} ( v \cdot x )^2}{\wx} - \frac{\vert x\vert^2}{\wx^2} - \wv ^{\beta-2}\frac{v\cdot x}{\wx}\right\} E\\
  & +\varepsilon  \psi_R'(\wv )\frac{v \cdot x}{\wx}\left(
            \frac{d+2}{\wv }
            - \left(1+ \wv ^\beta\right)\frac{|v|^2}{\wv ^3}
            - \frac{v \cdot x}{\wx\wv } 
          \right) E +\varepsilon \psi_R''(\wv ) \frac{v \cdot x}{\wx}\frac{|v|^2}{\wv ^2} E\\       
  &+ A\left(d-\wv ^{\beta-2}|v|^2\right)+2|v|^2+\varepsilon\psi_R(\wv )\frac{v \cdot x}{\wx}
    \left(d+2-\wv ^{\beta-2}|v|^2\right) +2\varepsilon\frac{v \cdot x}{\wx} \psi_R'(\wv )\frac{|v|^2}{\wv }.
\end{align*}
\end{lemma}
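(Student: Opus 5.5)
The identity in \Cref{lem:exact-formula} is a direct computation. I would organise it with the product rule for the second-order operator $\cL^\star=\sfL^\star+\sfT$. Since $\sfT$ is a first-order derivation and $\sfL^\star=\Delta_v-\wv^{\beta-2}v\cdot\nabla_v$, one has for smooth $f,g$
\begin{equation*}
 \cL^\star(fg)=f\,\cL^\star g+g\,\cL^\star f+2\nabla_v f\cdot\nabla_v g .
\end{equation*}
I would write $H=AE+E^2+\eps\,E\,Q_R$ and treat the three pieces separately, using throughout that $\alpha=1$, so that $V=\wx$ and $\nabla_x V=x/\wx$.

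First, $\sfT E=0$ and $\sfL^\star E=d-\wv^{\beta-2}|v|^2$, which gives the term $A(d-\wv^{\beta-2}|v|^2)$. Next, the product rule with $\nabla_v E=v$ gives $\cL^\star(E^2)=2E\,\cL^\star E+2|v|^2$, which produces the first term and the $2|v|^2$ term. For the corrector piece, the product rule gives
\begin{equation*}
 \cL^\star(EQ_R)=E\,\cL^\star Q_R+Q_R\,(d-\wv^{\beta-2}|v|^2)+2v\cdot\nabla_v Q_R .
\end{equation*}
Writing $s:=\frac{v\cdot x}{\wx}$, we have
\begin{equation*}
 \nabla_v Q_R=\psi_R'(\wv)\,\frac{v}{\wv}\,s+\psi_R(\wv)\,\frac{x}{\wx},
\end{equation*}
so that
\begin{equation*}
 2v\cdot\nabla_vQ_R=2\psi_R'\,\frac{|v|^2}{\wv}\,s+2\psi_R\,s .
\end{equation*}
The second summand combines with $Q_R\,d$ to give the factor $d+2$ in the last line, and the first summand is the final term of the statement.

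The remaining step is to compute $\cL^\star Q_R$, which yields the second and third groups (the ones multiplied by $E$).
\begin{itemize}
\item Transport part: $v\cdot\nabla_x s=\frac{|v|^2}{\wx}-\frac{(v\cdot x)^2}{\wx^3}$. The force term is $-\frac{x}{\wx}\cdot\nabla_vQ_R$, which contributes $-\psi_R\frac{|x|^2}{\wx^2}$ and $-\psi_R'\,s\,\frac{v\cdot x}{\wx\wv}$.
\item Laplacian: using $\nabla_v\wv=v/\wv$ and $\Delta_v\wv=\frac{d}{\wv}-\frac{|v|^2}{\wv^3}$, we get $\Delta_v(\psi_R(\wv))=\psi_R''\frac{|v|^2}{\wv^2}+\psi_R'\bigl(\frac d{\wv}-\frac{|v|^2}{\wv^3}\bigr)$. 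The cross term $2\nabla_v\psi_R\cdot\nabla_v s=2\psi_R'\frac{s}{\wv}$ turns $d$ into $d+2$. Note that $\Delta_v s=0$.
\item Drift: $-\wv^{\beta-2}v\cdot\nabla_vQ_R=-\wv^{\beta-2}\psi_R'\frac{|v|^2}{\wv}\,s-\wv^{\beta-2}\psi_R\,s$.
\end{itemize}
Using $\wv^{\beta-2}\frac{|v|^2}{\wv}=\wv^{\beta}\frac{|v|^2}{\wv^3}$ merges the drift contribution with $-\frac{|v|^2}{\wv^3}$ into $-(1+\wv^\beta)\frac{|v|^2}{\wv^3}$.

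Nothing here is a real obstacle. The only delicate point is bookkeeping: tracking which $\psi_R'$ terms originate from the Laplacian, from the drift, from the force term and from the cross term $2\nabla_vE\cdot\nabla_vQ_R$, so that the factors $d+2$ and $(1+\wv^\beta)$ come out exactly as stated.
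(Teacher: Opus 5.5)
Your proposal is correct and follows essentially the same route as the paper: split $H=AE+E^2+\eps EQ_R$, use $\sfT E=0$ and the product rule for $\mathcal L^\star$, then compute the transport part of $Q_R$, the velocity part of $Q_R$ and the cross term $2\nabla_vE\cdot\nabla_vQ_R$, exactly as in the paper's two steps. The only cosmetic difference is that you differentiate $\psi_R(\wv)\,s$ with the product rule and $\Delta_v s=0$, whereas the paper computes $\Delta_v\bigl(\psi_R(\wv)v_i\bigr)$ componentwise; both give the same factors $d+2$ and $(1+\wv^\beta)$.
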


\begin{proof}[{\bf Proof of \Cref{lem:exact-formula}}]
Recall that in this setting, the formal dual operator is therefore
\begin{equation}\label{eq:dual-setting}
  \mathcal L^{\star} h 
  =v\cdot\nabla_x h-\wx^{-1}x \cdot\nabla_v h
   +\Delta_v h-\wv ^{\beta-2}v\cdot\nabla_v h.
\end{equation}

\noindent {\bf \# Step 1.} First,
\begin{equation*}
  D_x \left( \frac{x}{\wx} \right) =\frac1{\wx}\left(I-\frac{x}{\wx} \otimes\frac{x}{\wx} \right).
\end{equation*}
Since $\psi_R(\wv )$ depends only on $v$,
\begin{equation*}
  v\cdot\nabla_xQ_R
  =\psi_R(\wv )\frac{|v|^2-\wx^{-2} \left(v \cdot x\right)^2}{\wx}.
\end{equation*}
Moreover,
\begin{equation*}
  \nabla_vQ_R
  =\psi_R(\wv )\frac{x}{\wx} 
  +\psi_R'(\wv )\frac{ v \cdot x}{\wx\wv }v,
\end{equation*}
and therefore
\begin{equation*}
  \frac{x}{\wx} \cdot\nabla_vQ_R
  =\psi_R(\wv )\frac{\vert x \vert^2}{\wx^2}
  +\psi_R'(\wv )\frac{\wx^{-2} \left(v \cdot x\right)^2}{\wv }.
\end{equation*}
This proves 
\begin{align}
  &\bigl(v\cdot\nabla_x-\frac{x}{\wx} \cdot\nabla_v\bigr)Q_R
  \nonumber\\
  &\qquad=
    \psi_R(\wv )\left(\frac{|v|^2-\wx^{-2} \left(v \cdot x\right)^2}{\wx}
    -\frac{\vert x \vert^2}{\wx^2}\right)
    -\psi_R'(\wv )\frac{\wx^{-2} \left(v \cdot x\right)^2}{\wv },
    \label{eq:TQ}\end{align}

For the velocity diffusion part, since $\frac{x}{\wx} $ is independent of $v$, it is enough to compute
\begin{equation*}
  \Delta_v\bigl(\psi_R(\wv )v_i\bigr)
  =\left[
    \psi_R''(\wv )\frac{|v|^2}{\wv ^2}
    +\psi_R'(\wv )\left(
        \frac{d+2}{\wv }-\frac{|v|^2}{\wv ^3}
      \right)
   \right]v_i
\end{equation*}
and
\begin{equation*}
  -\wv ^{\beta-2}v\cdot\nabla_v\bigl(\psi_R(\wv )v_i\bigr)
  =-\left(\wv ^{\beta-3}\psi_R'(\wv )|v|^2
  +\wv ^{\beta-2}\psi_R(\wv )\right)v_i.
\end{equation*}
This proves 
\begin{align}
&\bigl(\Delta_v-\wv ^{\beta-2}v\cdot\nabla_v\bigr)Q_R
  \nonumber\\
  &\qquad=( \frac{v \cdot x}{\wx} )\Bigg[
  \psi_R''(\wv )\frac{|v|^2}{\wv ^2}
  +\psi_R'(\wv )\left(
      \frac{d+2}{\wv }
      -\frac{|v|^2}{\wv ^3}
      -\wv ^{\beta-3}|v|^2
    \right)
  -\wv ^{\beta-2}\psi_R(\wv )
  \Bigg].\label{eq:velocityQ}    
\end{align}
Finally,
\begin{align}
  2\nabla_vE\cdot\nabla_vQ_R
  &=2v\cdot\left(
    \psi_R(\wv )\frac{x}{\wx} 
    +\psi_R'(\wv )\frac{ v \cdot x }{\wx\wv }v
  \right)\nonumber\\
  &=2 \left( \frac{v \cdot x}{\wx} \right)
\left(\psi_R(\wv )+\psi_R'(\wv )\frac{|v|^2}{\wv }\right).
  \label{eq:grad-cross}
\end{align}

\noindent{\bf \# Step 2.}  The transport part annihilates the energy:
\begin{equation*}
\sfT E =  \bigl(v\cdot\nabla_x-\frac{x}{\wx} \cdot\nabla_v\bigr)E=0,
\end{equation*}
and the velocity part gives
\begin{equation*}
  \bigl(\Delta_v-\wv ^{\beta-2}v\cdot\nabla_v\bigr)E
  =d-\wv ^{\beta-2}|v|^2.
\end{equation*}
Thus, the energy part gives
\begin{equation*}
  \mathcal L^{\star}(E^2+AE)
  =(2E+A)\left(d-\wv ^{\beta-2}|v|^2\right)+2|v|^2.
\end{equation*}
Since the transport part annihilates $E$,
\begin{equation*}
\begin{aligned}
  \mathcal L^{\star}(EQ_R)
  ={}&E\bigl(v\cdot\nabla_x-\frac{x}{\wx} \cdot\nabla_v\bigr)Q_R
  +Q_R\left(d-\wv ^{\beta-2}|v|^2\right) \\
  &+E\bigl(\Delta_v-\wv ^{\beta-2}v\cdot\nabla_v\bigr)Q_R
  +2\nabla_vE\cdot\nabla_vQ_R.
\end{aligned}
\end{equation*}
Substituting the identities obtained in {\bf \#Step 1} yields the asserted formula.
\end{proof}

\subsection{The Lyapunov estimate for $H$}

\begin{proposition}\label{prop:estH}
Choose $\varepsilon$ sufficiently large (as explicitly given from the proof - see \eqref{eq:eps-star}). Then, one can choose, in this order, $R\gg1$, $A\gg1$, and $X_0\gg1$ such that $H$ defined by \eqref{eq:HA-def} satisfies
\begin{equation}\label{eq:equivalence}
H \asymp E^2+AE,
\end{equation}
and
\begin{equation}\label{eq:HA-lyap}
  \mathcal L^{\star} H
  \le C\mathbf 1_K-cE,
  \qquad
  K:=\{(x,v):\ \wx\le X_0,\ \wv \le 2R\}.
\end{equation}
In fact, after increasing $R$ if needed, the high-velocity region satisfies the stronger estimate
\begin{equation}\label{eq:high-vel-strong}
  \mathcal L^{\star} H\le -cE\wv ^\beta
  \qquad\text{on }\{\wv \ge2R\}.
\end{equation}
\end{proposition}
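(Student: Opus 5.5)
The plan is to prove the two assertions separately: the equivalence \eqref{eq:equivalence} directly, and the drift estimate \eqref{eq:HA-lyap} by splitting phase space according to the cut-off $\psi_R$ and reading off the sign of each term in \Cref{lem:exact-formula}.

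\textbf{Equivalence.} Since $\psi_R(\wv)$ vanishes for $\wv\ge 2R$ and $|x\cdot v|/\wx\le |v|$, we have $|Q_R|\le 2R$. Hence $\eps E|Q_R|\le 2\eps R\,E$. Choosing $A\ge 4\eps R$ (after $\eps$ and $R$ are fixed) gives
\begin{equation*}
\tfrac12(E^2+AE)\le H\le \tfrac32(E^2+AE).
\end{equation*}
This is why $A$ must be chosen after $R$.

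\textbf{High velocities, $\wv\ge2R$.} Here $\psi_R=\psi_R'=\psi_R''=0$, so
\begin{equation*}
\cL^\star H=(2E+A)\bigl(d-\wv^{\beta-2}|v|^2\bigr)+2|v|^2 .
\end{equation*}
Since $\wv^{\beta-2}|v|^2\gtrsim \wv^\beta\ge (2R)^\beta$ and $2|v|^2\le 4E$, taking $R$ large makes $-\wv^{\beta-2}|v|^2$ absorb both $d$ and the term $4E/(2E)$. This yields \eqref{eq:high-vel-strong}. The $A$-contribution is then negative as well.

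\textbf{Transition zone, $R\le\wv\le2R$.} The $\psi_R'$ and $\psi_R''$ terms are bounded, using $|\psi_R'|\lesssim 1/R$ and $|\psi_R''|\lesssim 1/R^2$, by
\begin{equation*}
\eps E\Bigl(\tfrac{|v|}{R}\bigl(1+\wv^{\beta-1}\bigr)+\tfrac{|v|}{R^2}\Bigr)\lesssim \eps E .
\end{equation*}
The $\psi_R$ terms are $O(\eps E\,(R^2/\wx+1))$. All of these are dominated by $-2\wv^{\beta-2}|v|^2E\lesssim -R^\beta E$ once $R\gg \eps^{1/\beta}$. The lower-order terms (those with no factor $E$) contribute at most $O\bigl((A+\eps)R^\beta+R^2\bigr)$, which is $o(E)$ as $\wx\to\infty$.

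\textbf{Low velocities, $\wv\le R$, and the main obstacle.} Here $\psi_R=1$ and $\psi_R'=0$. Write $a=|x|/\wx$ and $w=\wv^{\beta-2}\le1$. The coefficient of $E$ is
\begin{equation*}
2d-2w|v|^2+\eps\frac{|v|^2}{\wx}-\eps a^2-\eps w\frac{v\cdot x}{\wx}.
\end{equation*}
The term $\eps|v|^2/\wx\le \eps R^2/\wx$ is made small by taking $\wx\ge X_0$.

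The remaining quadratic expression in $(a,|v|)$ is the critical step. The dissipation $-\eps a^2\approx-\eps$ must beat both $+2d$ and the cross term $\eps w a|v|$. Note that $w|v|\le\wv^{\beta-1}\le1$ because $\beta\le1$, and that $-2w|v|^2$ already beats $2d$ once $\wv^\beta\gtrsim d$.

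I would split at a velocity $V_1$ with $V_1^\beta\simeq 2d$:
\begin{enumerate}[label=(\roman*)]
\item For $\wv\ge V_1$, the collision term $-2w|v|^2$ handles $2d$. The cross term is treated by Young's inequality,
\begin{equation*}
\eps w a|v|\le \tfrac{\eps}{2}a^2+\tfrac{\eps}{2}w^2|v|^2 ,
\end{equation*}
using $w\le V_1^{\beta-2}$ small enough that $\tfrac{\eps}{2}w^2|v|^2\le w|v|^2$.
\item For $\wv\le V_1$, the cross term is at most $\eps a|v|\,\wv^{\beta-2}$, which is strictly smaller than $\eps$ only when $|v|$ is small compared with $1$. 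So $\eps$ must be compared with $2d$ using the precise ratio $\sup_{\wv\le V_1}\wv^{\beta-2}|v|$.
\end{enumerate}
This balancing is exactly what fixes the explicit threshold \eqref{eq:eps-star}, and it is where I expect the real difficulty. I would first try to prove that $\eps\bigl(1-\wv^{\beta-1}\bigr)$ plus the collision dissipation exceeds $2d$ uniformly, possibly replacing Young's inequality with the exact quadratic-form discriminant.

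\textbf{Conclusion.} The lower-order terms $A(d-w|v|^2)+2|v|^2+O(\eps)$ are bounded for $\wv\le R$. They are therefore absorbed by $-cE$ once $\wx\ge X_0$, with $X_0$ chosen last, because $E\ge \wx$. On the compact set $K$ every term is bounded, which gives the constant $C$ in \eqref{eq:HA-lyap}. The order of choices is $\eps$, then $R$, then $A$, then $X_0$.
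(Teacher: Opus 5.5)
\textbf{Verdict: genuine gap in the low-velocity zone.} Your equivalence argument, high-velocity zone, transition annulus, and parameter order $\eps$, $R$, $A$, $X_0$ all agree with the paper. The gap is in the region $\wv\le R$. This is the heart of the proposition and the only place where the threshold $\eps_*$ enters, and there you stop at a plan.

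\textbf{The inequality you propose is false.} You suggest proving that $\eps\bigl(1-\wv^{\beta-1}\bigr)+2\wv^{\beta-2}|v|^2$ exceeds $2d$ uniformly. At $v=0$ one has $\wv^{\beta-1}=1$ and $\wv^{\beta-2}|v|^2=0$, so the left-hand side vanishes. For $\beta=1$ the $\eps$-term vanishes identically.
\begin{itemize}
\item The loss comes from the bound $\wv^{\beta-2}|v|\le\wv^{\beta-1}$. Near $v=0$ the only mechanism beating $2d$ is $-\eps a^2\approx-\eps$. The cross term is harmless there precisely because $t(|v|):=\wv^{\beta-2}|v|$ vanishes at $v=0$, and replacing it by $\wv^{\beta-1}$ discards exactly this information.
\item It is also not true that the cross term is below $\eps$ ``only when $|v|$ is small'': for $\beta\le1$ one has $t(u)<1$ for every $u\ge0$.
\item Your Young step in (i) forces $V_1^{2-\beta}\ge\eps/2$. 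For $\beta=1$ one has $\sup_u t(u)=1$. Comparing $\eps$ with $2d$ through $\sup_{\wv\le V_1}t(|v|)$, as you suggest in (ii), then requires $\eps\gtrsim dV_1^2\gtrsim d\eps^2$, which is circular.
\end{itemize}

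\textbf{The missing step is short and needs no splitting in velocity.} Set $a=|x|/\wx\in[0,1]$, so that $1-a\le\wx^{-2}$. For $t\ge0$ use the elementary inequality $-a^2+ta\le t-1+2(1-a)$, which is equivalent to $-(1-a)^2-t(1-a)\le0$. Together with $\eps|v|^2/\wx\le\eps R^2/\wx$, it bounds the coefficient of $E$ by
\begin{equation*}
2d-2\wv^{\beta-2}|v|^2+\eps\bigl(\wv^{\beta-2}|v|-1\bigr)+\eps\Bigl(\frac{R^2}{\wx}+\frac{2}{\wx^{2}}\Bigr).
\end{equation*}
So $a$ never needs to be treated as a free variable.

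Now take $\eps>\eps_*:=\sup_{u\ge0}\frac{(2d-2\wu^{\beta-2}u^2)_+}{1-\wu^{\beta-2}u}$. This supremum is finite because the numerator vanishes for large $u$, while the denominator is continuous and positive. With this choice, the function $2d-2\wu^{\beta-2}u^2+\eps(\wu^{\beta-2}u-1)$ is continuous, negative everywhere, and tends to $-\infty$. It is therefore bounded above by some $-4\kappa_0<0$ for all $u\ge0$ at once. Finally, choosing $X_0$ large absorbs the last error term and the bounded remainders.
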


\begin{proof}[{\bf Proof of \Cref{prop:estH}}]

Start with a technical definition. For $0<\beta\le1$ and $u\ge0$,
\begin{equation*}
  \wu^{\beta-2}u<1,
  \qquad
  \wu^{\beta-2}u^2\to+\infty
  \quad\text{as }u\to+\infty.
\end{equation*}
Define
\begin{equation}\label{eq:eps-star}
  \varepsilon_*
  :=\sup_{u\ge0}
  \frac{\bigl(2d-2\wu^{\beta-2}u^2\bigr)_+}
       {1-\wu^{\beta-2}u}.
\end{equation}
This number is finite. Choose $\varepsilon>\varepsilon_*$. Then there exists $\kappa_0>0$ such that
\begin{equation}\label{eq:key-one-dimensional}
  2d-2\wu^{\beta-2}u^2
  +\varepsilon\bigl(\wu^{\beta-2}u-1\bigr)
  \le -4\kappa_0
  \qquad\text{for all }u\ge0.
\end{equation}

We shall now prove the norm equivalence. It remains only to prove the equivalence \eqref{eq:equivalence}.  Since
\begin{equation*}
  |Q_R|\le |v|\mathbf 1_{\wv \le2R}\le2R,
\end{equation*}
choosing
\begin{equation*}
  A\ge4\varepsilon R
\end{equation*}
gives
\begin{equation*}
  E+\frac{A}{2} \leq E+A-2\varepsilon R \leq E+A+\varepsilon Q_R\le E+A+2\varepsilon R\le E+\frac{3A}{2}.
\end{equation*}
Because $H=E(E+A+\varepsilon Q_R)$, this proves \eqref{eq:equivalence}.

To show the Lyapunov estimate, we split the proof into the three velocity regions.  The order of the parameter choices is important: $A$ is fixed before the final choice of $X_0$, because some bounded remainders are proportional to $A$.

\medskip
\noindent\textbf{\# Zone 1. The low-velocity region $\wv \le R$.}
Here $\psi_R(\wv )=1$ and $\psi_R'(\wv )=\psi_R''(\wv )=0$. Hence \Cref{lem:exact-formula} becomes
\begin{align}\label{eq:low-zone}
  \mathcal L^{\star} H
  ={}&
  E\left[
      2d-2\wv ^{\beta-2}|v|^2
      +\varepsilon\left(
        \frac{|v|^2-\wx^{-2} \left(v \cdot x\right)^2}{\wx}
        -\frac{\vert x \vert^2}{\wx^2}
        -\wv ^{\beta-2} \frac{v \cdot x}{\wx} 
      \right)
    \right]
  \nonumber\\
  &+A\left(d-\wv ^{\beta-2}|v|^2\right)
   +2|v|^2
   +\varepsilon \left( \frac{v \cdot x}{\wx} \right)\left(d-\wv ^{\beta-2}|v|^2\right)
   +2\varepsilon \frac{v \cdot x}{\wx}  .
\end{align}
The last line is bounded from above by a constant depending on $A,\varepsilon,R,d$.  It remains to control the coefficient of $E$. Using
\begin{equation*}
  -\wv ^{\beta-2} \frac{v \cdot x}{\wx} 
  \le \wv ^{\beta-2}\frac{\vert x \vert}{\wx} \,|v|,
\end{equation*}
and, for $0\le \eta\le1$ and $a\ge0$,
\begin{equation*}
  -\eta^2+a\eta\le a-1+2(1-\eta),
\end{equation*}
with $\eta=\frac{\vert x \vert}{\wx}$ and $a=\wv ^{\beta-2}|v|$, we get
\begin{align*}
  &2d-2\wv ^{\beta-2}|v|^2
  +\varepsilon\left(
        \frac{|v|^2-\wx^{-2} \left(v \cdot x\right)^2}{\wx}
        -\frac{\vert x \vert}{\wx}
        -\wv ^{\beta-2} \frac{v \cdot x}{\wx} 
      \right)
  \\
  &\quad\le
  2d-2\wv ^{\beta-2}|v|^2
  +\varepsilon\bigl(\wv ^{\beta-2}|v|-1\bigr)
  +\varepsilon\left(\frac{|v|^2}{\wx}+2\left(1-\frac{\vert x \vert}{\wx}\right)\right).
\end{align*}
Since $|v|\le R$ in this region and $1-\frac{\vert x \vert}{\wx}\le\wx^{-2}$, \eqref{eq:key-one-dimensional} implies, after choosing $X_0$ large enough, that for $\wx\ge X_0$,
\begin{equation*}
  2d-2\wv ^{\beta-2}|v|^2
  +\varepsilon\left(
        \frac{|v|^2-\wx^{-2} (v \cdot x)^2}{\wx}
        -\frac{|x|^2}{\wx} 
        -\wv ^{\beta-2} \frac{v \cdot x}{\wx} 
      \right)
  \le -2\kappa_0.
\end{equation*}
Therefore, on $\{\wv \le R,\ \wx\ge X_0\}$,
\begin{equation}\label{eq:low-final}
  \mathcal L^{\star} H\le C_{A,\varepsilon,R}-2\kappa_0E.
\end{equation}
After increasing $X_0$ once more, the bounded remainder is absorbed by the negative multiple of $E$ outside the compact.

\medskip
\noindent\textbf{\# Zone 2. The transition region $R\le\wv \le2R$.}
The derivatives of the cut-off are supported here.  From the bounds on $\psi_R'$ and $\psi_R''$, and using $0<\beta\le1$, one has
\begin{align}\label{eq:velocityQ-bound}
  &\left|
  \psi_R''(\wv )\frac{|v|^2}{\wv ^2}
  +\psi_R'(\wv )\left(
      \frac{d+2}{\wv }
      -\frac{|v|^2}{\wv ^3}
      -\wv ^{\beta-3}|v|^2
    \right)
  -\wv ^{\beta-2}\psi_R(\wv )
  \right|
  \le \frac{C_{\psi,d}}{R}.
\end{align}
Indeed,
\begin{equation*}
  \left|\psi_R''(\wv )\frac{|v|^2}{\wv ^2}\right|\lesssim R^{-2},
\end{equation*}
\begin{equation*}
  |\psi_R'(\wv )|\left(
      \frac{d+2}{\wv }+\frac{|v|^2}{\wv ^3}+\wv ^{\beta-3}|v|^2
    \right)\lesssim R^{-1},
  \qquad
  \wv ^{\beta-2}\lesssim R^{-1}.
\end{equation*}
Consequently,
\begin{align*}
 & \left| \frac{v \cdot x}{\wx} \right| \left|
  \psi_R''(\wv )\frac{|v|^2}{\wv ^2}
  +\psi_R'(\wv )\left(
      \frac{d+2}{\wv }
      -\frac{|v|^2}{\wv ^3}
      -\wv ^{\beta-3}|v|^2
    \right)
  -\wv ^{\beta-2}\psi_R(\wv )
  \right|
  \le C_{\psi,d},\\
 & \left|\psi_R'(\wv )\frac{\left( v \cdot x \right)^2}{\wv \wx^2}\right|
  \le C_\psi.
\end{align*}
On the other hand,
\begin{equation*}
  \wv ^{\beta-2}|v|^2
  =\wv ^\beta-\wv ^{\beta-2}
  \ge cR^\beta
\end{equation*}
for $R$ sufficiently large. Thus the coefficient of $E$ in \Cref{lem:exact-formula} is bounded above by
\begin{equation*}
  2d-cR^\beta+\frac{4\varepsilon R^2}{\wx}+C_{\psi,d}\varepsilon.
\end{equation*}
Choose $R$ sufficiently large, after $\varepsilon$ has been fixed, so that
\begin{equation*}
  2d-cR^\beta+C_{\psi,d}\varepsilon\le -4\kappa_1
\end{equation*}
for some $\kappa_1>0$.  After $A$ has been fixed, choose $X_0$ larger, if necessary, so that
\begin{equation*}
  \frac{4\varepsilon R^2}{\wx}\le \kappa_1
  \qquad\text{for }\wx\ge X_0,
\end{equation*}
and so that the bounded remainders are absorbed.  The terms in \Cref{lem:exact-formula} not multiplied by $E$ are bounded above in the transition annulus, with a bound depending on $A,\varepsilon,R,d$. Hence, on $\{R\le\wv \le2R,\ \wx\ge X_0\}$,
\begin{equation}\label{eq:transition-final}
  \mathcal L^{\star} H\le C_{A,\varepsilon,R}-\kappa_1E,
\end{equation}
and the final enlargement of $X_0$ gives the desired exterior estimate.

\medskip
\noindent\textbf{\# Zone 3. The high-velocity region $\wv \ge2R$.}
Here $\psi_R(\wv )=0$, so the corrector vanishes and
\begin{equation*}
  \mathcal L^{\star} H=(2E+A)\left(d-\wv ^{\beta-2}|v|^2\right)+2|v|^2 \leq (2E+A)\left(d-\wv ^{\beta-2}|v|^2\right) + 4E,
\end{equation*}
since $E\ge \frac{|v|^2}{2}$. For $R$ sufficiently large,
\begin{equation*}
  d-\wv ^{\beta-2}|v|^2\le -\frac34\wv ^\beta.
\end{equation*}
 and $\wv ^\beta\to\infty$, increasing $R$ once more gives
\begin{equation*}
  \mathcal L^{\star} H\le -\frac12E\wv ^\beta,
\end{equation*}
which is \eqref{eq:high-vel-strong}.

Combining \eqref{eq:low-final}, \eqref{eq:transition-final}, and \eqref{eq:high-vel-strong} gives \eqref{eq:HA-lyap}, with
$K=\{\wx\le X_0,\wv \le2R\}$.

\end{proof}

\subsection{Taking the exponential }

\begin{proof}[{\bf Proof of \Cref{thm:lyap-alpha1-betaleq1}}]
Let $\Phi(H)=\exp(\delta H^{\frac{\theta}{2}})$. Since $0<\theta\le1$, one has
\begin{equation*}
  \Phi''(H)\le \frac{\delta\theta}{2}H^{\frac{\theta}{2}-1}\Phi'(H).
\end{equation*}
Therefore
\begin{equation}\label{eq:chain}
  \mathcal L^{\star} m_\theta
  \le \Phi'(H)\left(
    \mathcal L^{\star} H+\frac{\delta\theta}{2}H^{\frac{\theta}{2}-1}|\nabla_vH|^2
  \right).
\end{equation}
For the above choice of parameters, there is a constant $C>0$ such that
\begin{equation}\label{eq:grad-bound}
  |\nabla_vH|^2\le C E^2(1+\wv ^2)
  \qquad\text{for all }(x,v).
\end{equation}
Moreover, on $\{\wv \le2R\}$ one has the sharper bound
\begin{equation}\label{eq:grad-low-bound}
  |\nabla_vH|^2\le C_R E^2.
\end{equation}
Indeed, differentiating $H$ in $v$ gives
\begin{equation*}
  \nabla_vH=(2E+A+\varepsilon Q_R)v+\varepsilon E\nabla_vQ_R.
\end{equation*}
The quantity $Q_R$ is bounded by $2R$ on its support. Also
\begin{equation*}
  \nabla_vQ_R
  =\psi_R(\wv )\frac{x}{\wx} 
  +\psi_R'(\wv )\frac{ v \cdot x }{\wx\wv }v
\end{equation*}
is bounded uniformly for fixed $R$. This proves \eqref{eq:grad-low-bound} on $\{\wv \le2R\}$.  Outside this region, the corrector vanishes and
\begin{equation*}
  \nabla_vH=(2E+A)v,
\end{equation*}
which is bounded by $CE\wv $. This proves \eqref{eq:grad-bound}.

In the bounded-velocity exterior region $\{\wv \le2R,\ \wx\ge X_0\}$, \Cref{prop:estH} and \eqref{eq:grad-bound} give
\begin{equation*}
  \mathcal L^{\star} H+\frac{\delta\theta}{2}H^{\frac{\theta}{2}-1}|\nabla_vH|^2
  \le -cE+C\delta E^\theta.
\end{equation*}
Since $\theta\le1$, enlarging $X_0$ if necessary and then choosing $\delta$ small makes this bounded above by $-cE/2$ outside $K$.

In the high-velocity region $\{\wv \ge2R\}$, \Cref{prop:estH} gives $\mathcal L^{\star} H\le-cE\wv ^\beta$, while \eqref{eq:grad-bound} gives
\begin{equation*}
  H^{\frac{\theta}{2}-1}|\nabla_vH|^2
  \lesssim E^{\theta-2}E^2\wv ^2
  =E^\theta\wv ^2.
\end{equation*}
Since $E\ge \frac14 \wv ^2$ for $\wv $ large and $\theta\le\frac{\beta}{2}$,
\begin{equation*}
  E^\theta\wv ^2
  =E\wv ^\beta\,E^{\theta-1}\wv ^{2-\beta}
  \lesssim E\wv ^\beta\,\wv ^{2\theta-\beta}
  \lesssim E\wv ^\beta.
\end{equation*}
Choosing $\delta$ small enough absorbs the second-order term in \eqref{eq:chain}. Hence
\begin{equation*}
  \mathcal L^{\star} m_\theta
  \le C\mathbf 1_K-c\Phi'(H)E.
\end{equation*}
Finally, by \eqref{eq:equivalence}, $H\asymp E^2$ outside a compact set, and hence
\begin{equation*}
  \Phi'(H)E
  =\frac{\delta\theta}{2}H^{\frac{\theta}{2}-1}E\,m_\theta
  \asymp H^{-(1-\theta)/2}m_\theta.
\end{equation*}
Since
\begin{equation*}
  \ln m_\theta=\delta H^{\frac{\theta}{2}},
\end{equation*}
we have
\begin{equation*}
  H^{-(1-\theta)/2}
  \asymp\delta^{\frac{1-\theta}{\theta}}(\ln(e+ m_\theta))^{-\frac{1-\theta}{\theta}}.
\end{equation*}
This proves \eqref{eq:mtheta-lyap}.
\end{proof}

\section{The case \texorpdfstring{$\alpha =1$}{} and \texorpdfstring{$\beta > 1$}{}}\label{sec:Lyapunov2}

Throughout this section,
\begin{equation*}
0 < \theta \leq \min\left(1,\frac\beta2\right).
\end{equation*}
Fix $R>1$ and let $\eta_R\in C^\infty(\R^d;[0,1])$ be radial, with
\begin{equation*}
  \eta_R(v)=1\quad \text{when } |v|\le R,
  \qquad
  \eta_R(v)=0\quad \text{when } |v|\ge 2R,
\end{equation*}
and
\begin{equation*}
  |\nabla_v\eta_R|\lesssim R^{-1},
  \qquad
  |\nabla_v^2\eta_R|\lesssim R^{-2}.
\end{equation*}
Define
\begin{equation}\label{eq:F-R}
F_R(x,v):=\wx^\theta+\theta\wx^{\theta-2} \eta_R(v) \,  x\cdot\chi(v).
\end{equation}
For $\kappa>0$ and $K>0$ to be chosen, put
\begin{equation}\label{eq:M0-W-m}
  M_0:=\exp\bigl(\kappa F_R\bigr),
  \qquad
  W:=\exp\bigl(\kappa E^\theta\bigr),
\end{equation}
and
\begin{equation}\label{eq:m-def}
  m:=M_0+K W.
\end{equation}
The additive
construction $M_0+KW$ is useful here because the linearity of
$\mathcal L^\star$ avoids any additional cross term between the two
weights.

First note that, since $\eta_R\chi$ is bounded and $\theta \leq 1$,
\begin{equation}\label{eq:M0-comparison}
M_0(x,v) \asymp e^{\kappa \wx^\theta}.
\end{equation}
Thus $m$ is positive and coercive. The following theorem establishes \Cref{thm:main-lyapunov} in the present parameter regime. 
\begin{theorem}\label{thm:alpha1-betagt1}
There exist constants $R>1$, $K>0$, $\kappa>0$, $c>0$, $C>0$, and a compact set $\cK\subset\R^d\times\R^d$ such that the weight $m$ defined by \eqref{eq:m-def} satisfies
\begin{equation}\label{eq:scalar-drift}
  \cLs m
  \le C\one_{\cK}
  -c \kappa^{\frac{1}{\theta}}\frac{m}{\left( \ln(e+m)\right)^{\frac{1-\theta}{\theta}}}.
\end{equation}
When $\theta=1$, the denominator in \eqref{eq:scalar-drift} is interpreted as $1$.   
\end{theorem}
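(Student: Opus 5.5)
We will compute $\cLs M_0$ and $\cLs W$ separately and add them, exploiting linearity. Throughout, $\alpha=1$, so $\nabla_x V=x/\wx$.

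\textbf{Step 1: the energy weight $W$.} Since $\sfT E=0$, only the velocity part acts. The chain rule gives
\begin{equation*}
\cLs W = \kappa\theta E^{\theta-1}W\Bigl(d-\wv^{\beta-2}|v|^2 + (\theta-1)E^{-1}|v|^2+\kappa\theta E^{\theta-1}|v|^2\Bigr).
\end{equation*}
For $\wv\ge R$ with $R$ large, the bracket is $\le -\tfrac12\wv^\beta$. Indeed, $\kappa\theta E^{\theta-1}|v|^2\lesssim \kappa\wv^{2\theta}\le\kappa\wv^\beta$, using $\theta\le\beta/2$, $E\gtrsim|v|^2$ and $\kappa$ small.

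In the region $\wv\le 2R$ the bracket is bounded by $C_R$, so $\cLs W\le C_R\kappa E^{\theta-1}W$. This term has the wrong sign and must be absorbed by $M_0$ there. This is why $M_0$ must beat $W$ in this region. In fact $W\asymp e^{\kappa V^\theta}$ up to bounded factors, and so is $M_0$, since $\wx^\theta$ and $V^\theta=\wx^\theta$ agree.

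\textbf{Step 2: the corrected phase $M_0$.} Write $U=\wx^\theta$, so that $\nabla U=\theta\wx^{\theta-2}x$ and $F_R=U+\eta_R\nabla U\cdot\chi$. On $\{|v|\le R\}$ the cancellation from the introduction applies:
\begin{equation*}
\cLs F_R = v^\top D^2_xU\,\chi-\theta\wx^{\theta-3}x^\top D_v\chi\,x.
\end{equation*}
By \eqref{eq:cell-coercivity}, the second term is at most $-c_\chi\theta\wv^{2-\beta}\wx^{\theta-1}$ for $|x|\ge1$. The first term is $O(R^{C}\wx^{\theta-2})$, hence lower order for $\wx\ge X_0(R)$.

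On the transition annulus $R\le|v|\le 2R$, the cutoff derivatives produce errors of order $\wx^{\theta-1}\bigl(|\nabla\eta_R||D\chi|+(|\Delta\eta_R|+\wv^{\beta-1}|\nabla\eta_R|)|\chi|+|v\cdot\nabla_x|\text{ terms}\bigr)$. Using \eqref{eq:cell-growth-short}, these are $O(R^{2-\beta}\wx^{\theta-1})$ when $\beta>1$. In addition, $v\cdot\nabla U$ is no longer cancelled in the zone where $\eta_R<1$, which costs $|v|\wx^{\theta-1}\lesssim R\,\wx^{\theta-1}$. For $|v|\ge 2R$ we get $\cLs F_R=v\cdot\nabla U$, of size $|v|\wx^{\theta-1}$.

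The exponential adds the term $\kappa^2|\nabla_vF_R|^2$, which is $O(\kappa^2\wx^{2\theta-2})$ and is absorbed for $\kappa$ small. Hence
\begin{equation*}
\cLs M_0\le \kappa M_0\wx^{\theta-1}\bigl(-c+CR\1_{|v|\ge R}\bigr)+\text{l.o.t.}
\end{equation*}

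\textbf{Step 3: gluing, the main obstacle.} The bad part of $M_0$ lives on $|v|\ge R$ and is of size $\kappa R\wx^{\theta-1}e^{\kappa\wx^\theta}$ (or $\kappa|v|\wx^{\theta-1}e^{\kappa\wx^\theta}$ for $|v|\ge 2R$). There, $KW$ provides $-K\kappa\theta E^{\theta-1}\wv^\beta e^{\kappa E^\theta}$. Since $E^\theta\ge\wx^\theta+c|v|^2E^{\theta-1}$, we get $e^{\kappa E^\theta}\ge e^{\kappa\wx^\theta}$. Moreover $E^{\theta-1}\wv^\beta\gtrsim\wx^{\theta-1}\wv^{\beta}$ (when $\wx\gtrsim|v|^2$, and otherwise with the extra growth of $e^{\kappa E^\theta}$), and $\wv^\beta\ge |v|\ge R$ because $\beta>1$. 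So a fixed large $K$ absorbs this bad part; this is exactly where $\beta>1$ is used.

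Conversely, on $|v|\le R$, the bad part $C_RK\kappa E^{\theta-1}W$ must be dominated by $c\kappa\wx^{\theta-1}M_0$. Both are of size $\wx^{\theta-1}e^{\kappa\wx^\theta}$ up to factors: $W\le e^{\kappa(\wx^\theta+CR^2\wx^{\theta-1})}$ and $M_0\ge e^{\kappa\wx^\theta-C}$. The ratio is therefore $C_RK$ against $c$, which is the delicate point. I would resolve it by choosing the thresholds asymmetrically. Apply $W$'s dissipation from $\wv\ge R_1$ and use the corrector cutoff $R\gg R_1$, so that the positive part of $\cLs W$ lives only on $|v|\le R_1$, where it is bounded by $C_{R_1}$. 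Then fix $R_1$ and $K$ first, and choose $R$ large so that the corrector region covers $|v|\le R$. If this ordering still fails, I would instead weight the corrector term by a large multiple $\lambda$ in $F_R$, so that $M_0$'s dissipation is scaled up by $\lambda$ before fixing $K$, keeping $\kappa\lambda$ small to preserve \eqref{eq:M0-comparison}.

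\textbf{Step 4: conversion.} Outside a compact set we get $\cLs m\le -c\kappa\bigl(\wx^{\theta-1}M_0+KE^{\theta-1}W\bigr)\lesssim -c\kappa\, m\,(\max(\wx,E))^{\theta-1}$. Since $\ln m\asymp\kappa\max(\wx^\theta,E^\theta)$, we have $(\max)^{\theta-1}\asymp(\kappa^{-1}\ln(e+m))^{-(1-\theta)/\theta}$. This yields \eqref{eq:scalar-drift} with the factor $\kappa^{1/\theta}$, and the interpretation is trivial when $\theta=1$.
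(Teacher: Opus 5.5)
Your construction and bookkeeping follow the paper's proof: the same corrector cancellation for $M_0$ on $\{|v|\le R\}$ (Lemma~\ref{lem:M0}) and the same computation for $W$ (Lemma~\ref{lem:W}). Your asymmetric thresholds $R_1<R$ are exactly the paper's $\rho<R$. Step~4 is also fine, since $m\ge Ke^{\kappa E^\theta}$ gives $\kappa E^\theta\le(1+\abs{\ln K})\ln(e+m)$, and $E\ge\wx$.

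\textbf{The gap: Step~3 does not close as written.} On $\{|v|\le R_1\}$, the requirement $KC_{R_1}\le\frac12c_{R_1}$ forces $K$ to be \emph{small}, depending only on $R_1$. This is legitimate because $\eta_R\equiv1$ there, so neither $c_{R_1}$ nor $\sup_{|v|\le R_1}W/M_0$ depends on $R$ or on $\kappa\le1$. Your high-velocity argument instead asks for ``a fixed large $K$''. It rests on $\wv^\beta\ge|v|$, which only bounds the bad/good ratio by $C/K$. That inequality also holds for $\beta=1$, so it is not where $\beta>1$ is used. The missing observation is that, on $\{|v|\ge R\}$ in the zone $\wx\gtrsim|v|^2$, the ratio is at most $C\bigl(R^{1-\beta}+R^{-2}A_\beta(R)\bigr)/K$. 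This tends to $0$ as $R\to\infty$ precisely because $\beta>1$. So with $K$ already fixed small, one takes $R$ large afterwards. That is the real reason $R$ is chosen after $K$, not merely so that the corrector ``covers $|v|\le R$''. With this stated, your ordering is the paper's: $\rho$, then $K$, then $R$, then $\kappa$, then $\cK$.

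\textbf{Smaller points.}
\begin{itemize}
\item On the annulus, the term $-\wv^{\beta-2}(v\cdot\nabla\eta_R)\,\theta\wx^{\theta-2}x\cdot\chi$ is of order $R^{\beta-2}A_\beta(R)\wx^{\theta-1}$. That is about $R\wx^{\theta-1}$ for $\beta<3$ and $R^{\beta-2}\wx^{\theta-1}$ for $\beta>3$, not $O(R^{2-\beta}\wx^{\theta-1})$. It is still absorbed by the $R^\beta$ gain.
\item In the zone $\wx\lesssim|v|^2$, the factor $e^{-\kappa(E^\theta-\wx^\theta)}$ only helps once $\kappa E^\theta\gtrsim1$. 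Before that, the ratio is of order $\wv^{3-\beta-2\theta}/K$, so the exceptional set is compact only for fixed $\kappa$. This is harmless because $\cK$ is chosen last; the paper is equally brief here.
\item Discard your fallback. Multiplying only the corrector term of $F_R$ by $\lambda$ destroys the cancellation $v\cdot\nabla U+\nabla U\cdot\sfL^\star\chi=0$. It leaves the unsigned term $(1-\lambda)v\cdot\nabla U$, of size up to $(\lambda-1)\theta\abs{v}\wx^{\theta-1}$. This is not dominated by the gain $\lambda c_\chi\theta\wv^{2-\beta}\wx^{\theta-1}$ when $\beta>1$.
\end{itemize}
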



Note that when $\beta\ge2$, one may take $\theta=1$ in \Cref{thm:alpha1-betagt1} and the estimate \eqref{eq:scalar-drift} reduces to the geometric Lyapunov condition
\begin{equation*}
  \cLs m\le C\one_{\cK}-c \kappa m.
\end{equation*}
Thus, in the range $\beta\ge2$ no sub-geometric logarithmic loss is produced by this Lyapunov computation.

\subsection{Estimate on $M_0$}

\begin{lemma}[Estimate on $M_0$]\label{lem:M0}
Fix $1\le\rho<R$.  There exist $c_\rho,c_R,C_R,C>0$, a number
$\kappa_R>0$, and a compact set $\cK_R$ such that, for
$0<\kappa\le\kappa_R$,
\begin{align}
  \cLs M_0
  \le{}& C_R\one_{\cK_R}
   -c_\rho\kappa M_0V^{\theta-1}\one_{\{|v|\le\rho\}}
   -c_R\kappa M_0V^{\theta-1}
       \one_{\{\rho<|v|\le R\}} \notag\\
  &+C\kappa R^{\beta-2}A_\beta(R) M_0V^{\theta-1}
       \one_{\{R<|v|<2R\}}
   +C\kappa M_0V^{\theta-1}\wv 
       \one_{\{|v|\ge2R\}}.
  \label{eq:M0-estimate}
\end{align}
Here $c_\rho$ depends on $\rho$ but not on $R$.

\end{lemma}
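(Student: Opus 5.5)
The plan is to compute $\cLs M_0$ through the chain rule for the exponential:
\begin{equation*}
\cLs M_0=\kappa M_0\bigl(\cLs F_R+\kappa|\nabla_vF_R|^2\bigr),
\end{equation*}
since only the velocity Laplacian produces a second-order term. I then estimate $\cLs F_R$ using the cancellation mechanism described in the introduction. Write $U(x)=\wx^\theta$, so that $\nabla U=\theta\wx^{\theta-2}x$ and $F_R=U+\eta_R\nabla U\cdot\chi$. Because $\alpha=1$, we have $V=\wx$ and $\nabla V=x/\wx$, hence $V^{\theta-1}=\wx^{\theta-1}$. The computation gives
\begin{equation*}
\cLs F_R=(1-\eta_R)\,v\cdot\nabla U+\eta_R\,v^\top D_x^2U\,\chi-\eta_R\,\nabla V^\top D_v\chi\,\nabla U+\nabla U\cdot\bigl(\sfL^\star(\eta_R\chi)-\eta_R\sfL^\star\chi\bigr)-(\nabla V\cdot\nabla_v\eta_R)(\nabla U\cdot\chi).
\end{equation*}
Here I used $\sfL^\star\chi=-v$, which turns $v\cdot\nabla U+\eta_R\nabla U\cdot\sfL^\star\chi$ into $(1-\eta_R)v\cdot\nabla U$.

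The region $|v|\le R$ is the core of the argument. There $\eta_R=1$, and the good term is
\begin{equation*}
-\theta\wx^{\theta-1}\frac{x^\top D_v\chi\,x}{\wx^2}\le -c_\chi\theta\wv^{2-\beta}\frac{|x|^2}{\wx^{2}}\,\wx^{\theta-1},
\end{equation*}
by \eqref{eq:cell-coercivity}. For $|x|$ large this is at most $-c\,\wv^{2-\beta}V^{\theta-1}$. On $|v|\le\rho$ the factor $\wv^{2-\beta}$ is bounded below independently of $R$, which gives $c_\rho$. On $\rho<|v|\le R$ it is bounded below by a constant depending on $R$ (recall $\beta>1$), which gives $c_R$.

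The remaining terms on $|v|\le R$ must be shown to be lower order:
\begin{itemize}
\item The Hessian term satisfies $|D_x^2U|\lesssim\wx^{\theta-2}$, so it is $O_R(\wx^{\theta-2})=o(V^{\theta-1})$.
\item The gradient-squared term satisfies $|\nabla_vF_R|\lesssim_R\wx^{\theta-1}$, so $\kappa|\nabla_vF_R|^2\lesssim_R\kappa\wx^{2\theta-2}\le\kappa\wx^{\theta-1}$. This is absorbed once $\kappa\le\kappa_R$.
\end{itemize}
Both errors are absorbed outside a compact set $\{\wx\le X_R,|v|\le 2R\}$, and on that compact set everything is bounded by $C_R$.

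On the transition annulus $R<|v|<2R$ I only need an upper bound. The terms produced by $\nabla_v\eta_R$ and $\nabla_v^2\eta_R$ are controlled by $|\nabla U|\,A_\beta(R)R^{-1}\bigl(1+R^{-1}+R^{\beta-1}\bigr)$, using \eqref{eq:cell-growth-short} and the drift $\wv^{\beta-2}v$. The term $(\nabla V\cdot\nabla_v\eta_R)(\nabla U\cdot\chi)$ gives a similar contribution. The cross term $(1-\eta_R)v\cdot\nabla U$ is at most $2R\,\theta\wx^{\theta-1}$. The $D_v\chi$ term is nonpositive and can be dropped. All of this has to be collected into the stated form $C\kappa R^{\beta-2}A_\beta(R)M_0V^{\theta-1}$. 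This bookkeeping is the main obstacle: the bound $2R$ on the cross term and the factor $R^{\beta-1}$ on the cut-off terms must be reconciled with the exact power $R^{\beta-2}A_\beta(R)$ in the statement, and I would redo that comparison carefully, possibly using the size of $|\nabla U|$ relative to $V^{\theta-1}$.

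On $|v|\ge2R$ the corrector part vanishes, so $F_R=U$ and
\begin{equation*}
\cLs F_R=v\cdot\nabla U\le\theta\wx^{\theta-1}|v|.
\end{equation*}
Since $\nabla_vF_R=0$, there is no second-order term, and this gives the last term $C\kappa M_0V^{\theta-1}\wv$ of \eqref{eq:M0-estimate}.
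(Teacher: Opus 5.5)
Your proposal is correct and takes essentially the paper's route. That means the exponential chain rule, then on $\{|v|\le R\}$ the exact cancellation from $\sfL^\star\chi=-v$ plus the coercivity of $D_v\chi$, with the Hessian and $\kappa|\nabla_vF_R|^2$ terms absorbed outside an $R$-dependent compact set; then a crude upper bound on the annulus, and $\cLs F_R=v\cdot\nabla U$ on $\{|v|\ge2R\}$.

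The annulus bookkeeping you flag closes immediately. The factor $R^{-1}\cdot R^{\beta-1}=R^{\beta-2}$ dominates the other cut-off factors $R^{-1},R^{-2}$ since $\beta>1$, and $2R\lesssim R^{\beta-2}A_\beta(R)$ holds in each case $\beta<3$, $\beta=3$, $\beta>3$. The paper only records an $R$-dependent bound $\lesssim_R\wv\wx^{\theta-1}$ there. Your $R$-explicit tracking is therefore what the later requirement $R^{\beta-2}A_\beta(R)R^{-\beta}\to0$ actually needs.
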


\begin{proof}[{\bf Proof of \Cref{lem:M0}}]
Since $M_0=e^{\kappa F_R}$,
\begin{equation*}
  \frac{\cLs M_0}{M_0}
  =\kappa\left( \cLs  F_R+\kappa|\nabla_vF_R|^2\right).
\end{equation*}
{\bf \# On $\{|v|\le \rho\}$ and $\{\rho \leq |v|\le R\}$}, $\eta_R=1$ and the cell equation gives the exact cancellation
\begin{equation*}
  v\cdot\nabla_x(\wx^\theta)+\theta\wx^{\theta-2}x \cdot\sfL^\star\chi
  =v\cdot \theta\wx^{\theta-2}x -\theta\wx^{\theta-2}x \cdot v=0.
\end{equation*}
Therefore, on $\{|v|\le R\}$,
\begin{align}\label{eq:LstarFR}
  \cLs F_R
  = \sfT F_R + \sfL^\star F_R
  = v^\top D_x^2(\wx^\theta)\,\chi
  -\theta\wx^{\theta-3} \, x^\top(D_v\chi)x .    
\end{align}
Using \Cref{thm:cell}, the second term satisfies
\begin{equation*}
-\theta\wx^{\theta-3} \, x^\top(D_v\chi)x \le -c_\rho \wx^{\theta-3}|x|^2.
\end{equation*}
on $\{|v|\le \rho\}$ and 
\begin{equation*}
-\theta\wx^{\theta-3} \, x^\top(D_v\chi)x \le -c_R \wx^{\theta-3}|x|^2
\end{equation*}
on $\{\rho \leq |v|\le R\}$. Since
\begin{equation*}
  |D_x^2(\wx^\theta)|
  \lesssim \wx^{\theta-2},
\end{equation*}
the first term in the r.h.s. of \eqref{eq:LstarFR} is absorbed by the second one outside a compact set in $x$.  The term
\begin{equation*}
  |\nabla_vF_R|^2\le A_{\beta}(R)^2\wx^{2\theta-2}
\end{equation*}
and thus is absorbed by taking $\kappa$ small.  This proves the negative part of \eqref{eq:M0-estimate}.

\medskip

\noindent{\bf \# On $R\le |v|\le2R$},
\begin{align*}
  \cLs F_R
  &= \sfT \left(\wx^\theta+\theta\wx^{\theta-2} \eta_R(v) \,  x\cdot\chi(v)\right)+ \sfL^\star (\wx^\theta+\theta\wx^{\theta-2} \eta_R(v) \,  x\cdot\chi(v))\notag\\
  &= \sfT \left(\wx^\theta\right) +\theta \sfT \left(\wx^{\theta-2}\,  x\cdot \eta_R\chi\right) + \theta\wx^{\theta-2} x\cdot\sfL^\star ( \eta_R\,  \chi)\notag\\
  &= \theta\wx^{\theta-2} x\cdot v +\theta \sfT \left(\wx^{\theta-2}\,  x\cdot \eta_R\chi\right) + \theta\wx^{\theta-2} x\cdot\sfL^\star ( \eta_R\,  \chi)\\
  &= \theta \sfT \left(\wx^{\theta-2}\,  x\cdot \eta_R\chi\right) + \theta\wx^{\theta-2} x\cdot\sfL^\star ( (\eta_R-1)\,  \chi).
\end{align*}
where we have used $\sfL^\star (\chi) = -v$. Using the product identities, 
\begin{align*}
&\sfT \left(\wx^{\theta-2}\,  x\cdot \eta_R\chi\right) = \Theta^{-1}v^\top D_x^2 \left(\wx^\theta \right)\eta_R\chi - \wx^{\theta-3} x^\top D_v(\eta_R \chi)\,  x,\\
&\sfL^\star ( (\eta_R-1)\,  \chi) = (\eta_R-1)\, \sfL^\star (  \chi) + \sfL^\star ( (\eta_R-1))\,  \chi + 2 \nabla_v \eta_R \cdot \nabla_v \chi. 
\end{align*}
we deduce that
\begin{align*}
\left\vert\sfT \left(\wx^{\theta-2}\,  x\cdot \eta_R\chi\right) \right\vert \lesssim_R \wv  \wx^{\theta-1},\qquad \left\vert\sfL^\star ( (\eta_R-1)\,  \chi) \right\vert \lesssim_R \wv ,
\end{align*}
which yields the last term in \eqref{eq:M0-estimate}.  

\medskip

\noindent{\bf \# For $|v|\ge2R$}, $\eta_R=0$, so $F_R=\wx^\theta$ and
\begin{equation*}
  \cLs F_R= \theta\wx^{\theta-2}v\cdot x 
  \qquad \textrm{and } \qquad
  |\cLs F_R|\le \theta \wx^{\theta-1}\wv .
\end{equation*}
Combining these estimates and enlarging the compact set gives \eqref{eq:M0-estimate}.
\end{proof}

\subsection{Estimate on the energy weight}

The dissipative radius in the energy estimate is fixed before the cutoff
radius $R$ is chosen.

\begin{lemma}[Estimate on $W$]\label{lem:W}
There exist a radius $\rho\ge2$, a number $\kappa_0>0$, and constants
$c_W,C_W>0$ such that, for every $0<\kappa\le\kappa_0$,
\begin{equation}\label{eq:W-estimate}
  \cLs W
  \le C_W\kappa WV^{\theta-1}\one_{\{|v|\le\rho\}}
  -c_W\kappa WE^{\theta-1}\wv ^\beta
       \one_{\{|v|\ge\rho\}}.
\end{equation}
The constants and the radius $\rho$ are independent of $R$.
\end{lemma}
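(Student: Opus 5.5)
We compute directly with the chain rule. Since $W=\exp(\kappa E^\theta)$ and $\sfT E=0$, the transport part annihilates $W$, and
\begin{equation*}
  \frac{\cLs W}{W}
  =\kappa\theta E^{\theta-1}\left(d-\wv^{\beta-2}|v|^2\right)
  +\kappa\theta(\theta-1)E^{\theta-2}|v|^2
  +\kappa^2\theta^2E^{2\theta-2}|v|^2 .
\end{equation*}
The second term is nonpositive because $\theta\le1$, so we may discard it.

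\textbf{High velocities.} We first fix $\rho$, independently of $R$ and $\kappa$, such that $d-\wv^{\beta-2}|v|^2\le-\frac12\wv^\beta$ for $|v|\ge\rho$. This is possible since $\wv^{\beta-2}|v|^2=\wv^\beta-\wv^{\beta-2}$ and $\beta>0$. On $\{|v|\ge\rho\}$ it then remains to absorb the term $\kappa^2\theta^2E^{2\theta-2}|v|^2$ into $\frac{\kappa\theta}{2}E^{\theta-1}\wv^\beta$, which amounts to showing
\begin{equation*}
  \kappa\theta E^{\theta-1}|v|^2\le\tfrac14\wv^\beta .
\end{equation*}
Here $E\ge\frac12|v|^2$ and $E\ge V\ge1$, and $\theta\le1$ makes $E^{\theta-1}$ decreasing in $E$. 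Hence
\begin{equation*}
  E^{\theta-1}|v|^2\lesssim \wv^{2\theta-2}\wv^2=\wv^{2\theta}\le \wv^\beta ,
\end{equation*}
using $\theta\le\beta/2$. The constants depend only on $d,\beta,\rho$. This is the main point of the proof: the constraint $\theta\le\min(1,\beta/2)$ is used exactly here. Choosing $\kappa_0$ small then gives the bound $-c_W\kappa WE^{\theta-1}\wv^\beta$ with, say, $c_W=\theta/4$.

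\textbf{Low velocities.} On $\{|v|\le\rho\}$ we have $E\asymp V$, since $|v|$ is bounded and $V\ge1$. We bound $d-\wv^{\beta-2}|v|^2\le d$, which gives the first term $\kappa\theta d\,E^{\theta-1}\lesssim\kappa V^{\theta-1}$. For the other term,
\begin{equation*}
  \kappa^2\theta^2E^{2\theta-2}|v|^2\le\kappa_0\rho^2\kappa E^{\theta-1}\cdot E^{\theta-1}\lesssim \kappa V^{\theta-1},
\end{equation*}
because $E^{\theta-1}\le1$. Together these give the term $C_W\kappa WV^{\theta-1}\one_{\{|v|\le\rho\}}$.

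None of these constants involves $R$, since $W$ contains no cut-off. The only care needed is the order of choices: first $\rho$, then $\kappa_0$, both independent of $R$.
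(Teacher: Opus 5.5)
Your proposal is correct and follows the paper's proof essentially line by line. You use the same chain-rule formula for $\cLs W/W$, discard the nonpositive middle term, choose $\rho$ so that the velocity drift gives $-c\wv^\beta$, and absorb the quadratic term for small $\kappa$ via $E\gtrsim\wv^2$ and $2\theta\le\beta$; your treatment of $\{|v|\le\rho\}$ simply spells out what the paper states in one line.
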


\begin{proof}[{\bf Proof of \Cref{lem:W}}]
Since $\sfT E=0$, $\cLs W=\sfL^\star W$. Moreover,
\begin{equation*}
  \sfL^\star E=d-\wv ^{\beta-2}|v|^2,
  \qquad
  |\nabla_vE|^2=|v|^2.
\end{equation*}
Thus
\begin{align*}
  \frac{\cLs W}{W}
  &=\kappa\theta E^{\theta-1}\bigl(d-\wv ^{\beta-2}|v|^2\bigr)
+\kappa\theta(\theta-1)E^{\theta-2}|v|^2+\kappa^2\theta^2E^{2\theta-2}|v|^2.
\end{align*}
The middle term is non-positive. Choose $\rho$ large enough that, for
$|v|\ge\rho$,
\begin{equation*}
  \wv ^{\beta-2}|v|^2-d\ge c\wv ^\beta.
\end{equation*}
The ratio of the quadratic term to the principal
negative term is bounded by
\begin{equation*}
\frac{\kappa^2\theta^2E^{2\theta-2}|v|^2}{\kappa\theta E^{\theta-1}\wv ^{\beta-2}|v|^2} = \theta\kappa E^{\theta-1}\wv ^{2-\beta}
  \le \theta\kappa\wv ^{2\theta-\beta}
  \le \theta\kappa,
\end{equation*}
where we used $E\ge \frac{|v|^2}{2}$ and $2\theta\le\beta$.  Taking
$\kappa_0$ small proves the negative part of \eqref{eq:W-estimate}.  On
$|v|\le\rho$, the formula for $\frac{\cLs W}{W}$ and $\theta\le1$ give
\begin{equation*}
  \cLs W\le C_W\kappa WV^{\theta-1}.
\end{equation*}
\end{proof}

\subsection{Proof of \Cref{thm:alpha1-betagt1}}

Starting from \Cref{lem:W} and \Cref{lem:M0}, take $\rho \geq 2$, $R > \rho$ and $\kappa$ so small that
\begin{align*}
  \cLs m
  &\le C_R\one_{\cK_R}
   -c_\rho\kappa M_0V^{\theta-1}\one_{\{|v|\le\rho\}}
   -c_R\kappa M_0V^{\theta-1}
       \one_{\{\rho<|v|\le R\}} 
+C\kappa A_\beta(R) M_0V^{\theta-1}
       \one_{\{R<|v|<2R\}} \\
  & +C\kappa M_0V^{\theta-1}\wv 
       \one_{\{|v|\ge2R\}}
  +KC_W\kappa WV^{\theta-1}\one_{\{|v|\le\rho\}}
  -Kc_W\kappa WE^{\theta-1}\wv ^\beta
       \one_{\{|v|\ge\rho\}}\\
       &\le C_R\one_{\cK_R}
   +\kappa\left( KC_W W-c_\rho M_0\right)V^{\theta-1}\one_{\{|v|\le\rho\}}\\
   &-\left(c_R\kappa M_0V^{\theta-1}+Kc_W\kappa WE^{\theta-1}\wv ^\beta\right)
       \one_{\{\rho<|v|\le R\}}\\       
&+\left(C\kappa R^{\beta-2}A_\beta(R) M_0V^{\theta-1}-Kc_W\kappa WE^{\theta-1}\wv ^\beta\right)
       \one_{\{R<|v|<2R\}} \\
  & +\left(C\kappa M_0V^{\theta-1}\wv -Kc_W\kappa WE^{\theta-1}\wv ^\beta\right)
       \one_{\{|v|\ge2R\}}.
\end{align*}
We first choose $R$ and $K$ so that the inner and exterior absorptions are compatible.  

On $|v|\le \rho$, choose $K$ so small (uniformly in $\kappa \leq 1$) that $KC_W W \leq \frac12 c_\rho M_0$. This is possible since
\begin{equation*}
  \frac{W}{M_0}
  \le C_\rho \exp\{\kappa(E^\theta-\wx^\theta)\} .
\end{equation*}
Moreover, 
\begin{align*}
E^\theta-\wx^\theta &= \left( \frac{\vert v \vert^2}{2} + \wx\right)^\theta-\wx^\theta \leq \left( \frac{\rho^2}{2} + \wx\right)^\theta-\wx^\theta \\
&= \wx^\theta \left(\left( \frac{\rho^2}{2\wx} + 1\right)^\theta-1\right) \leq \theta \wx^{\theta-1} \frac{\rho^2}{2} \leq \theta \frac{\rho^2}{2},
\end{align*}
 since $\theta \leq 1$.
On the annulus $\rho \leq |v|\le R$, the contribution is negative, so there is nothing to do. On $|v|\ge R$ we have, by \eqref{eq:M0-comparison},
\begin{equation*}
  \frac{M_0}{W}
  \le C_R\exp\{-\kappa(E^\theta-\wx^\theta)\} \leq C_R.
\end{equation*}
If $|v|^2\le \wx$, then $E\asymp \wx$ and the exponential factor is bounded by a constant.  Hence
\begin{equation*}
  \frac{M_0\wx^{\theta-1}\wv }{WE^{\theta-1}\wv ^\beta}
  \le C_R\wv ^{1-\beta}
  \le C_RR^{1-\beta}.
\end{equation*}
Since $\beta>1$, this is made smaller than $Kc/4$ by taking $R$ large.  If $|v|^2\ge \wx$, then $E^\theta-\wx^\theta\ge c\min(E^\theta, E^{\theta-1}|v|^2)$ and the exponential factor gives an even stronger bound. 

In the annulus $R < \vert v \vert < 2R$, one sees that
\begin{equation*}
    \frac{R^{\beta-2}A_\beta(R) M_0\wx^{\theta-1}}{ WE^{\theta-1}\wv ^\beta} \lesssim \frac{R^{\beta-2}A_\beta(R)}{ R^\beta} 
\end{equation*}
which is small when $R$ is large. 
Enlarging the compact set covers the bounded remaining region.

Let us tell explicitely that the choice of parameters is in the follwoing order : $\rho$ then $K$ then $R$ then $\kappa$, and the compact $\cK$ at end. 

We have established that there exist positive constants $\rho,R,K,\kappa,c,C$ and a compact set such that $m$ satisfies
\begin{align}
  \cLs m
  \le{}& C\one_{\cK}
  -c\kappa M_0V^{\theta-1}\one_{\{|v|\le R\}} -cK\kappa WE^{\theta-1}\wv ^\beta
       \one_{\{|v|\ge R\}}.
\end{align}
It remains to pass to the scalar form.  On $|v|\le R$, $m\asymp M_0\asymp e^{\kappa \wx^\theta}$, and hence
\begin{equation*}
  \wx^{\theta-1}
  \asymp \kappa^{\frac{1-\theta}{\theta}}
  \left(\ln(e+m)\right)^{-\frac{1-\theta}{\theta}}.
\end{equation*}
On $|v|\ge R$, $m = M_0 + KW\le (C_R + K)W$ and
\begin{equation*}
  E^{\theta-1}\wv ^\beta
  \ge c_R E^{\theta-1}
  \asymp \kappa^{\frac{1-\theta}{\theta}}
  \left(\ln(e+m)\right)^{-\frac{1-\theta}{\theta}}.
\end{equation*}
The proof is now complete. 

\section{The case \texorpdfstring{$\alpha < 1$}{} and \texorpdfstring{$\beta > 2$}{}}\label{sec:Lyapunov3}

We define an auxiliary function $\tau(x) = \wx^{-2(1-\alpha)} = \alpha^{-\frac{2(1-\alpha)}{\alpha}}
   V(x)^{-\frac{2(1-\alpha)}{\alpha}}$. Direct computations give, 
\begin{align*}
& |\nabla V(x)|^2
 =\tau(x)\bigl(1-\wx^{-2}\bigr)\le \tau(x),
\qquad |\nabla V(x)|^2 \ge \frac12\tau(x)\qquad\text{when }\wx\ge\sqrt2,
\\
&\|D_x^2V(x)\| \le \wx^{\alpha-2} =\wx^{-\alpha}\tau(x),
  \qquad 
 |\nabla_x\tau(x)| \le 2(1-\alpha)\frac{\tau(x)}{\wx}.
\end{align*}
Let $h\in C^\infty([0,\infty))$ satisfy
\begin{equation}\label{eq:cutoff-h}
0\le h\le1,
\qquad
h(s)=0\ (s\le1),
\qquad
h(s)=1\ (s\ge2),
\qquad
h'(s)\ge0.
\end{equation}
Define, for the entire section, the following parameters, fixed in
the following order:
\begin{equation}\label{eq:explicit-parameters}
 \begin{gathered}
 \nu:=\frac{\beta+2}{4\beta},
 \qquad
 R_0:=\max\left\{\sqrt8,(8d)^{\frac{1}{\beta}},128^{\frac{1}{\beta}}, \left(16(1 +\Vert h' \Vert_{\infty})\right)^{\frac{1}{\beta-2}},\left(16(1 +\Vert h'' \Vert_{\infty})\right)^{\frac{1}{\beta}}\right\},
 \\
 \eps:=\min\left\{
 \frac12,
 \left(\frac{c_\chi}{8(d+2^{\beta-2})}\right)^{\beta-1},
 \bigl(128(d+2^{\beta-2})\bigr)^{-\frac{\beta-1}{\beta}},c_\chi^{-1}
 \right\},
 \\
 \Phi_0:=1+\frac{2C_\chi^2}{\eps},
 \qquad
 \kappa:=\min\left\{ \frac{c_\chi}{512 \Vert D_v\chi\Vert_\infty^2}
 \eps^{\frac{\beta-2}{\beta-1}} , \frac{1}{512(1+\Vert h' \Vert_\infty) \eps} \right\}.
 \end{gathered}
\end{equation}
Notice that
\begin{equation}\label{eq:nu-properties}
 \frac1\beta<\nu<\frac12,
 \qquad
 1-2\nu(1-\alpha)(3-\beta)_+>0.
\end{equation}

For $0<s\le1$ set
\begin{equation*}
R(s):=R_0s^{-\nu} (\geq R_0),
\end{equation*}
where $R_0\ge1$ will be chosen large.  For $0<s\le1$ define
\begin{equation}\label{eq:eta-q-def}
\eta(s,v):=s+(1-s)h\!\left(\frac{\wv }{R(s)}\right),
\qquad
q(x,v):=\eta(\tau(x),v)\frac{|v|^2}{2}.
\end{equation}
To ease readability, we introduce the following shorthands (slightly abusing notation in the indices).
\begin{equation*}
\eta_{\tau}(v) := \eta(\tau,v).
\end{equation*}
The coefficient $\eta_\tau$ is of size $|\nabla V|^2$ at moderate velocities and becomes equal to $1$ at very large velocities. Set
\begin{equation*}
F(x,v):=V(x)+\nabla_x V(x)\cdot\chi(v).
\end{equation*}
Given $\eps$ and $\Phi_0$ to be chosen later, we define the Lyapunov phase $\Phi$ and the Lyapunov weight $m$ as
\begin{equation*}
\Phi(x,v):=\Phi_0+ F+\eps q, \qquad \textrm{and} \qquad m:=\exp(\kappa\Phi).
\end{equation*}

Define the following explicit compact $\mathcal K$:
\begin{align}
 \mathcal K:= {}&
 \left\{(x,v):
 (1-\alpha)2^{(3-\beta)_+}R_0^{(3-\beta)_+}
 \wx^{-1+2\nu(1-\alpha)(3-\beta)_+}\ge\frac1{64},
 \wv \le2R_0\wx^{2\nu(1-\alpha)}
 \right\}
 \label{eq:compact-K}\\
 &\cup
 \left\{(x,v):
 \wx^{1-\alpha}\wv ^{\beta-1}\le64
 \right\} \cup
 \left\{(x,v):
 \wv \le\eps^{-\frac1{\beta-1}}, \wx^\alpha\le
 \frac{4C_\chi}{c_\chi}\eps^{-\frac{\beta}{\beta-1}}
 \right\}\\
&\cup
 \left\{(x,v):
 \wx^\alpha\wv ^{\beta-2}\le\frac{128C_\chi}{\eps}
 \right\}.
 \notag
\end{align}
Set
\begin{equation}\label{eq:R-final}
 R:=1+\max_{(x,v)\in\mathcal K}E(x,v).
\end{equation}

The main result of this section, which is \Cref{thm:main-lyapunov} is this range of parameters is, 

\begin{theorem}\label{thm:beta_gt2_alphalt1}
Let $d\ge1$, $0<\alpha<1$, and $\beta>2$.  With the choices
\eqref{eq:explicit-parameters} and the definitions above, the weight $m$ is
positive, tends to $+\infty$ at infinity, has compact sublevel sets, and
satisfies
\begin{equation}\label{eq:main-drift}
 \cL^\star m
 \le C\one_{\{E\le R\}}
 -\frac{c_\chi}{256}
  \eps^{\frac{\beta-2}{\beta-1}}
  \kappa^{1+\frac{2(1-\alpha)}{\alpha}}
  \frac{m}{\{\ln(e+m)\}^{\frac{2(1-\alpha)}{\alpha}}},
\end{equation}
where $R$ is given by \eqref{eq:R-final} and the compact remainder is the
finite, explicitly determined number
\begin{equation}\label{eq:C-final}
 C:=\max_{\{E\le R\}}
 \left[
  \cL^\star m
  +\frac{c_\chi}{256}
   \eps^{\frac{\beta-2}{\beta-1}}
   \kappa^{1+\frac{2(1-\alpha)}{\alpha}}
   \frac{m}{(\ln(e+m))^{\frac{2(1-\alpha)}{\alpha}}}
 \right]_+.
\end{equation}
Consequently, every constant in \eqref{eq:main-drift} is a function of
$(d,\alpha,\beta)$ only, once the universal profile $h$ has been fixed.
\end{theorem}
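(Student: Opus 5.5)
Since $m=e^{\kappa\Phi}$, the computation reduces to the identity
\begin{equation*}
\frac{\cL^\star m}{\kappa m}=\cL^\star\Phi+\kappa|\nabla_v\Phi|^2 .
\end{equation*}
The plan is to show that, outside the compact set $\mathcal K$, the right-hand side is bounded by $-\frac{c_\chi}{256}\eps^{\frac{\beta-2}{\beta-1}}\tau(x)$. We then convert $\tau$ into a function of $m$. Positivity and coercivity come first. By \eqref{eq:cell-growth-short}, for $\beta>2$ we have $|\chi|\lesssim\wv^{(3-\beta)_+}$ (up to a logarithm when $\beta=3$), and $|\nabla V|\le\tau^{1/2}$. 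Hence $|\nabla V\cdot\chi|\le C_\chi\tau^{1/2}\wv^{(3-\beta)_+}$. This is absorbed by Young's inequality into $\frac{\eps}{2}\eta_\tau\frac{|v|^2}{2}+\frac{C}{\eps}$, because $\eta_\tau\ge\tau$ and $\wv^{2(3-\beta)_+}\lesssim\wv^2$. The choice $\Phi_0=1+2C_\chi^2/\eps$ then gives $\Phi\ge1$ and $\Phi\gtrsim V+\eps\tau|v|^2$, and also $\Phi\lesssim V+|v|^2$. In particular, on the region where $\cL^\star$ is genuinely negative we get $\Phi\asymp V$ at moderate velocities, which is what matters.

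Next we compute $\cL^\star\Phi$ term by term. The cell cancellation, as in the introduction, gives
\begin{equation*}
\cL^\star F=v^\top D_x^2V\,\chi-\nabla V^\top D_v\chi\,\nabla V.
\end{equation*}
By \eqref{eq:cell-coercivity} the good term is at most $-c_\chi\wv^{2-\beta}\tau/2$. The remainder is bounded by $\wx^{-\alpha}\tau\,\wv^{1+(3-\beta)_+}$, and it is lower order as long as $\wv\le R(\tau)=R_0\tau^{-\nu}$. This is exactly where \eqref{eq:nu-properties} and the first piece of $\mathcal K$ enter. For $q$, the transport part gives $\sfT q=\frac{|v|^2}{2}v\cdot\nabla_x\eta_\tau-\eta_\tau\nabla V\cdot v-\frac{|v|^2}{2}\partial_v\eta\cdot\nabla V$. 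The velocity part gives $\eta_\tau(d-\wv^{\beta-2}|v|^2)$ plus terms in $h',h''$ of relative size $1/R_0$ (these are controlled by the bounds on $R_0$). The term $-\eps\eta_\tau\wv^{\beta-2}|v|^2$ is the second source of dissipation. On $\{\wv\ge R(\tau)\}$ it equals $-\eps\wv^\beta$ up to constants, and it dominates both the cross term $\eps\eta_\tau|\nabla V||v|$ and all error terms of $F$, including the loss of corrector coercivity and the growth of $v^\top D^2V\chi$. At moderate velocities the cross term $\eps\tau\,\tau^{1/2}|v|$ is handled by Young's inequality: half goes into $\eps\tau\wv^\beta$ and half into $c_\chi\wv^{2-\beta}\tau$. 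The exponent $\eps^{\frac{\beta-2}{\beta-1}}$ is expected to come from optimizing over this split, since the worse of $c_\chi\wv^{2-\beta}$ and $\eps\wv^{\beta}$ bounds the dissipation below by $\eps^{\frac{\beta-2}{\beta-1}}$ at the crossover $\wv\sim\eps^{-1/(2\beta-2)}$ (roughly). The quadratic term satisfies $\kappa|\nabla_v\Phi|^2\le2\kappa(\|D_v\chi\|_\infty^2\tau+\eps^2|\nabla_v q|^2)$, and $\nabla_v q$ picks up $\eta_\tau v$ plus $h'$ terms. The choice of $\kappa$ makes the first piece at most $\frac{c_\chi}{512}\eps^{\frac{\beta-2}{\beta-1}}\tau$ and the second a small fraction of $\eps\eta_\tau\wv^\beta$.

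The main obstacle is the transition annulus $R(\tau)\le\wv\le2R(\tau)$ and the $x$-derivative of $\eta_\tau$. Since $\tau$ depends on $x$, the quantity $v\cdot\nabla_x\eta_\tau$ contains $|v|\,|\nabla\tau|\,(1+\|h'\|\,\nu\wv/R)$. This is of order $\tau|v|/\wx$ times $|v|^2$, and it must be absorbed by $\eps\eta_\tau\wv^\beta\gtrsim\eps\tau\wv^\beta$. It is absorbed because $\beta>2$, $\wx$ is large outside $\mathcal K$, and the second and fourth pieces of $\mathcal K$ handle the relevant products. I would check this region carefully, zone by zone ($\wv\le R(\tau)$, the annulus, and $\wv\ge2R(\tau)$), precisely following the thresholds encoded in the four pieces of $\mathcal K$.

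Finally, outside $\mathcal K$ we have $\cL^\star m\le-\frac{c_\chi}{256}\eps^{\frac{\beta-2}{\beta-1}}\kappa\,\tau\, m$. At large velocities the dissipation is even stronger than $\tau$, since $\wv^\beta\gtrsim\Phi^{\beta/2}$. Elsewhere $\Phi\asymp V$, so $\tau\gtrsim V^{-\frac{2(1-\alpha)}{\alpha}}\gtrsim(\kappa^{-1}\ln m)^{-\frac{2(1-\alpha)}{\alpha}}$. This gives the factor $\kappa^{1+\frac{2(1-\alpha)}{\alpha}}$. Since $\mathcal K\subset\{E\le R\}$ by \eqref{eq:R-final}, defining $C$ as in \eqref{eq:C-final}, which is finite by continuity on a compact set, closes the argument.
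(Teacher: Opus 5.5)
You follow the paper's overall architecture: the chain rule for $m=e^{\kappa\Phi}$, the cell cancellation in $\cL^\star F$, dissipation from $\eps q$, a bound by $-\tau$ outside $\mathcal K$, then conversion via $\Phi$. But the step you single out as the main obstacle does not close as proposed.

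\textbf{The transition annulus.} You size the cutoff part of $v\cdot\nabla_x q$ as $O(\tau|v|^3/\wx)$, which misses a factor $\tau^{-1}$. Since $R(\tau)=R_0\tau^{-\nu}$,
\[
\partial_\tau\eta=\bigl(1-h\bigr)+(1-\tau)\,h'\Bigl(\tfrac{\wv}{R_\tau}\Bigr)\frac{\nu\wv}{\tau R_\tau}.
\]
With $|\nabla\tau|\le 2(1-\alpha)\tau/\wx$, this term is of size $(1-\alpha)\nu(1-\tau)h'\,|v|^3/\wx$ in the annulus, with no factor $\tau$. Against the lower bound $\eps\eta_\tau\wv^\beta\gtrsim\eps\tau\wv^\beta$ that you invoke, the ratio is of order $h'\wv^{3-\beta}\wx^{1-2\alpha}$. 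This is unbounded, for instance when $\alpha<\frac12$ and $\beta\le 3$. Using $\eta_\tau$ itself does not help uniformly either. Near the inner edge $\eta_\tau=\tau+(1-\tau)h\approx\tau$, while $h'/h$ is unbounded as $s\to1^+$. So you would need quantitative control of $h'$ by $h$ there, which your sketch does not supply.

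The missing ingredient is the term $\sfL^\star$ produces when it hits the cutoff, $(1-\tau)h'\bigl(\tfrac{\wv}{R_\tau}\bigr)\tfrac{\sfL^\star\wv}{R_\tau}\tfrac{|v|^2}{2}$. In the annulus $h'\ge0$ and $\wv\,\sfL^\star\wv\le-\frac12\wv^\beta$. So this is a drift of full size, at most $-\frac18(1-\tau)h'\wv^\beta$, not an error of relative size $1/R_0$ as you treat it. This is why the paper uses the dissipation scale $D_\tau=(\eta_\tau+(1-\tau)h')\wv^\beta$. With it, $|v\cdot\nabla_xq|\le(1-\alpha)\wx^{-1}\wv^{(3-\beta)_+}D_\tau$ on $\{\wv\le 2R_\tau\}$. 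That is where $1-2\nu(1-\alpha)(3-\beta)_+>0$ and the first piece of $\mathcal K$ are used. Separately, $\nu\beta>1$ gives $\eta_\tau\wv^\beta\ge R_0^\beta\tau^{1-\nu\beta}\ge R_0^\beta$ in the annulus, which absorbs the bounded positive $h'$ and $h''$ terms.

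\textbf{Misattributions of pieces of $\mathcal K$.} You also attribute pieces of $\mathcal K$ to the wrong terms. Your claim that $v^\top D_x^2V\chi$ is lower order than $c_\chi\tau\wv^{2-\beta}$ on all of $\{\wv\le R(\tau)\}$ is false in general. The ratio is $\lesssim\wx^{-\alpha}\wv^{\beta-1+(3-\beta)_+}$, which grows along $\wv\sim R_0\wx^{2\nu(1-\alpha)}$ when $\alpha$ is small. This one is repairable, because $\eps\tau\wv^\beta$ absorbs the term except at bounded velocities. The paper splits at the fixed threshold $\wv=\eps^{-1/(\beta-1)}$:
\begin{itemize}
\item Below it, the corrector gives $c_\chi\wv^{2-\beta}\ge c_\chi\eps^{\frac{\beta-2}{\beta-1}}$, combined with the third piece of $\mathcal K$.
\item Above it, $\eps D_\tau\ge\eps\tau\wv^\beta$ is used, combined with the fourth piece.
\end{itemize}
The two smallness conditions on $\eps$ absorb $\eps(d+2^{\beta-2})\tau$ on each side, and this split is what produces the stated power $\eps^{\frac{\beta-2}{\beta-1}}$.

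\textbf{The final conversion.} No case distinction is needed. Your Young bound gives exactly $\Phi\ge V$, hence $\tau=\alpha^{-\frac{2(1-\alpha)}{\alpha}}V^{-\frac{2(1-\alpha)}{\alpha}}\ge\Phi^{-\frac{2(1-\alpha)}{\alpha}}$ everywhere. This is what yields the explicit constant in the statement.
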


\subsection{The variable velocity weight}

We start with a collection of identities that will be important later on. As a start, 
\begin{equation*}
\cL^\star q = \sfL^\star q + \sfT q =  \sfL^\star q +v\cdot\nabla_xq - \nabla V\cdot\nabla_vq.
\end{equation*}
Note that, for any $x \in \R^d$,
\begin{align*}
    \sfL^\star q &= \sfL^\star \left( \eta(\tau(x),\cdot)\frac{|\cdot|^2}{2} \right) = \eta(\tau(x),\cdot)\sfL^\star \left( \frac{|\cdot|^2}{2} \right) + \sfL^\star \left( \eta(\tau(x),\cdot)\right) \frac{|\cdot|^2}{2}  +2 v \cdot \nabla_v \eta(\tau(x),\cdot),\\
    &= \eta(\tau(x),\cdot)\left( d-\wv ^{\beta-2}|v|^2\right) + \sfL^\star \left( \eta(\tau(x),\cdot)\right) \frac{|\cdot|^2}{2}  +2 v \cdot \nabla_v \eta(\tau(x),\cdot),
\end{align*}
We have used, 
\begin{equation*}
\sfL^\star \left( \frac{|\cdot|^2}{2} \right)
= d-\wv ^{\beta-2}|v|^2.
\end{equation*}
Given that $\nabla_v \eta = (1-\tau(x))h'\!\left(\frac{\wv }{R_{\tau}}\right) \frac{v}{\wv  R_{\tau}}$,
\begin{align*}
\nabla_v q &= \eta v (\tau(x),\cdot)+  \frac{\vert v\vert^2}{2} \nabla_v \eta(\tau(x),\cdot) \\
&=\eta(\tau(x),\cdot) \, v +  \frac{\vert v\vert^2}{2} (1-\tau(x))h'\!\left(\frac{\wv }{R_{\tau}}\right) \frac{v}{\wv  R_{\tau}}, \\
\sfL^\star \left( \eta(\tau(x),\cdot) \right)& = (1-\tau(x)) \left(\sfL^\star \left(\frac{\wv }{R_{\tau}}\right)  \!h'\left(\frac{\wv }{R_{\tau}}\right) + h''\left(\frac{\wv }{R_{\tau}}\right)\frac{  \left\vert  v\right\vert^2}{\wv ^2 R_{\tau}^2} \right).
\end{align*}
Finally, 
\begin{align*}
    \sfL^\star q    &= \eta(\tau(x),\cdot)\left( d-\wv ^{\beta-2}|v|^2\right) \\
    &+ (1-\tau(x)) \left(\frac{\sfL^\star \left(\wv \right)}{R_{\tau}}  \!h'\left(\frac{\wv }{R_{\tau}}\right) + h''\left(\frac{\wv }{R_{\tau}}\right)\frac{  \left\vert  v\right\vert^2}{\wv ^2 R_{\tau}^2} \right) \frac{|v|^2}{2}  \\
    &+ (1-\tau(x))h'\!\left(\frac{\wv }{R_{\tau}}\right) \frac{2 \vert v \vert^2}{\wv  R_{\tau}},
\end{align*}
Next, 
\begin{align*}
v\cdot\nabla_xq &= \frac{|v|^2}{2}   v\cdot\nabla_x \left(\eta(\tau(x),\cdot) \right) = \frac{|v|^2}{2} \frac{\partial \eta }{\partial \tau}(\tau(x),v)  v\cdot\nabla_x \tau\\
&= \frac{|v|^2}{2} \left(1-h\!\left(\frac{\wv }{R_{\tau}}\right)-(1-\tau(x))h'\!\left(\frac{\wv }{R_{\tau}}\right)\frac{R'(\tau(x))\wv }{R_{\tau}^2}\right) \left( v\cdot\nabla_x \tau \right) \\
\nabla V\cdot\nabla_vq &= \left( \nabla V \cdot v \right) \left( \eta(\tau(x),\cdot) + \frac{|v|^2}{2} h'\!\left(\frac{\wv }{R_{\tau}}\right) \frac{(1-\tau(x))}{\wv  R_{\tau}} \right)
\end{align*}
Conclude with, 
\begin{align*}
 \wv  \sfL^\star(\wv )
 &=\left(d-1-\wv ^{\beta}\right)  +\wv ^{-2}\left(\wv ^{\beta}+1\right)\label{eq:L-wv}
\end{align*}
We shall now proceed with estimates for $\cL^\star q$. Define,
\begin{equation}\label{eq:Dv-def}
D_\tau(v):=\left(\eta_\tau(v) + (1-\tau)h' \left(\frac{\wv }{R_{\tau}}\right)\right)\wv ^\beta
\end{equation}
This is the dissipation scale associated with $q$.  The first term is the usual velocity dissipation generated by the coefficient $\eta_\tau$; the second term is the additional negative drift created by the monotone transition of $h$.

\begin{lemma}\label{lem:q-estimates}
For all $(x,v) \in \R^d \times \R^d$,
\begin{equation}\label{eq:q-gradient}
 |\nabla_vq |^2
 \le(1+\Vert h'\Vert_{\infty})\wv ^{2-\beta}D_{\tau } 
 \le (1+\Vert h'\Vert_{\infty})D_{\tau }.
\end{equation}
Outside the compact set $\mathcal K$ defined in \eqref{eq:compact-K},
\begin{equation}\label{eq:q-full}
 \cL^\star q 
 \le(d+2^{\beta-2})\tau 
   -\frac{D_{\tau }}{32}.
\end{equation}
\end{lemma}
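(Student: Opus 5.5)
The plan is to read off the gradient bound directly from the formula for $\nabla_v q$. Then I will estimate $\cL^\star q$ term by term, splitting velocity space according to the value of $s:=\wv/R_\tau$: the region $s\le 1$ where $h$ and its derivatives vanish, the transition region $1\le s\le 2$, and $s\ge 2$ where $\eta_\tau=1$.

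For \eqref{eq:q-gradient}, start from
\[
\nabla_v q=\Bigl(\eta_\tau+\tfrac{|v|^2}{2\wv R_\tau}(1-\tau)h'(s)\Bigr)v .
\]
On the support of $h'$ one has $\wv\le 2R_\tau$, so $\frac{|v|^2}{2\wv R_\tau}\le 1$. Hence
\[
|\nabla_v q|\le \bigl(\eta_\tau+(1-\tau)h'(s)\bigr)|v| .
\]
Squaring, and using $\eta_\tau+(1-\tau)h'\le 1+\|h'\|_\infty$ together with $|v|^2\le \wv^2=\wv^{2-\beta}\wv^\beta$, gives the first inequality. The second follows from $\beta>2$.

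For \eqref{eq:q-full}, I collect the identities already computed into four groups.

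\begin{enumerate}
\item[(i)] The \emph{main term} is $\eta_\tau(d-\wv^{\beta-2}|v|^2)$. Since $\wv^{\beta-2}|v|^2=\wv^\beta-\wv^{\beta-2}$, it is at most $\eta_\tau(d+2^{\beta-2}) -\tfrac12\eta_\tau\wv^\beta$ once $\wv^\beta\ge 8d$ or so. On $\{s\le1\}$ we have $\eta_\tau=\tau$, which produces the positive $(d+2^{\beta-2})\tau$ term. Where $\eta_\tau>\tau$, we have $\wv\ge R_\tau\ge R_0$, and the constant is absorbed by $\eta_\tau\wv^\beta$ using $R_0^\beta\ge 8d$.
\item[(ii)] The \emph{$h'$ terms from $\sfL^\star$} are $(1-\tau)h'(s)\frac{|v|^2}{2R_\tau}\sfL^\star(\wv)$ and $2(1-\tau)h'\frac{|v|^2}{\wv R_\tau}$. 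By the formula for $\wv\sfL^\star(\wv)$, the first is approximately $-(1-\tau)h'(s)\frac{|v|^2}{2\wv R_\tau}\wv^\beta$. On $1\le s\le 2$, $\frac{|v|^2}{\wv R_\tau}\ge \tfrac12$, so this is the good drift, at least $-\tfrac18(1-\tau)h'\wv^\beta$. The remaining pieces are $O(h'(1+d))$ and are absorbed since $\wv\ge R_0$ with $R_0^\beta\ge 128$. This is exactly why $h'\ge0$ is needed.
\item[(iii)] The \emph{$h''$ term} is bounded by $\|h''\|_\infty\frac{|v|^2}{R_\tau^2}\lesssim \|h''\|_\infty$ on the transition zone. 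There $\eta_\tau\ge\tau$, and $\wv^\beta\ge R_0^\beta\ge 16(1+\|h''\|_\infty)$. Ideally one absorbs it into $\eta_\tau\wv^\beta$; when $\eta_\tau$ is close to $\tau$, it should instead be absorbed via $\tau R_\tau^\beta=R_0^\beta\tau^{1-\nu\beta}$, which is large because $\nu\beta>1$. I would check this carefully, and it may need the compact $\mathcal K$.
\item[(iv)] The \emph{transport terms} are $v\cdot\nabla_x q$ and $-\nabla V\cdot\nabla_v q$. Using $|\nabla\tau|\le 2(1-\alpha)\tau/\wx$, $|R'(\tau)|\tau\lesssim R_\tau$ and $|\nabla V|\le\tau^{1/2}$, they are bounded by
\[
C|v|^3\frac{\tau}{\wx}+C\tau^{1/2}|v|\bigl(\eta_\tau+h'\bigr).
\]
\end{enumerate}

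The main obstacle is absorbing these transport errors into $D_\tau/32$ outside $\mathcal K$, uniformly in both velocity regimes.

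\emph{Moderate velocities} ($\wv\le 2R_\tau=2R_0\wx^{2\nu(1-\alpha)}$). Here $D_\tau\ge\tau\wv^\beta$, so the ratio of the first error to the dissipation is at most $\wv^{3-\beta}/\wx$. This is at most $R_0^{(3-\beta)_+}2^{(3-\beta)_+}\wx^{-1+2\nu(1-\alpha)(3-\beta)_+}$, which is small off the first piece of $\mathcal K$ thanks to \eqref{eq:nu-properties}. The second error has ratio $\tau^{-1/2}\wv^{1-\beta}=\wx^{1-\alpha}\wv^{1-\beta}$, which is at most $1/64$ off the second piece of $\mathcal K$.

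\emph{Large velocities} ($s\ge2$). Here $\eta_\tau=1$ and $h'=0$, and the only transport term is $-\nabla V\cdot v$, which is dominated by $\wv^\beta$ in the same way. I expect that verifying the precise numerical constants (the $1/32$) against the explicit definition \eqref{eq:compact-K} will be tedious but routine once these ratios are isolated.
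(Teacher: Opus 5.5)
Your gradient bound and your treatment of $\sfL^\star q$ essentially match the paper. For the $h''$ term, $\eta_\tau\wv^\beta\ge\tau R_\tau^\beta=R_0^\beta\tau^{1-\nu\beta}\ge R_0^\beta$ on the annulus suffices, with no help from $\mathcal K$.

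The genuine gap is in (iv), in the $x$-transport of the cut-off. Since $R(\tau)=R_0\tau^{-\nu}$, one has $R'(\tau)=-\nu R_\tau/\tau$, so with $s=\wv/R_\tau$,
\[
v\cdot\nabla_x q=\frac{|v|^2}{2}\Bigl(1-h(s)+\nu(1-\tau)h'(s)\frac{\wv}{\tau R_\tau}\Bigr)\,v\cdot\nabla_x\tau .
\]
After multiplying by $|\nabla_x\tau|\le 2(1-\alpha)\tau/\wx$, the factor $\tau$ cancels in the $h'$ part:
\[
|v\cdot\nabla_x q|\le(1-\alpha)\frac{|v|^3}{\wx}\Bigl((1-h(s))\tau+2\nu(1-\tau)h'(s)\Bigr).
\]
Your bound $C|v|^3\tau/\wx$ drops the second piece, and that piece cannot be absorbed by $\tau\wv^\beta$. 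At points of the annulus where $h'(s)>0$, with $\wv\asymp R_\tau=R_0\wx^{2\nu(1-\alpha)}$, the ratio is of order $\wx^{1-2\alpha+2\nu(1-\alpha)(3-\beta)}$. This is unbounded, for instance, when $2<\beta\le 3$ and $\alpha<\frac12$. What closes the estimate is the second component $(1-\tau)h'(s)\wv^\beta$ of $D_\tau$, i.e. the drift produced by $h'\ge0$ that you found in (ii). Using $(1-h)\tau\le\eta_\tau$ and $2\nu<1$, one gets $|v\cdot\nabla_x q|\le(1-\alpha)\wv^{3-\beta}\wx^{-1}D_\tau$. Only then does the first piece of $\mathcal K$ give the factor $\frac1{64}$. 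This is the reason $D_\tau$ carries that extra term, so the missing step is a structural idea, not constant-chasing.

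The same remark applies to $\nabla V\cdot\nabla_v q$. Keep the factor $\eta_\tau+(1-\tau)h'(s)$ and compare with the full $D_\tau$:
\[
|\nabla V\cdot\nabla_v q|\le\tau^{1/2}\wv^{1-\beta}D_\tau=\wx^{\alpha-1}\wv^{1-\beta}D_\tau .
\]
Off the second piece of $\mathcal K$, the factor $\wx^{\alpha-1}\wv^{1-\beta}$ is below $\frac1{64}$. Suppose instead, as in your moderate-velocity step, you compare with $\tau\wv^\beta$ after bounding $\eta_\tau+h'$ by a constant. Then you get the ratio $\tau^{-1/2}\wv^{1-\beta}=\wx^{1-\alpha}\wv^{1-\beta}$ that you wrote. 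That ratio is unbounded at fixed $v$ as $|x|\to\infty$, and it is not what the second piece of $\mathcal K$ controls.
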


\begin{proof}[{\bf Proof of \Cref{lem:q-estimates}}]

Start with estimates of $\sfL^\star q$ and $|\nabla_vq |^2$.

\smallskip
\noindent{\bf \#  Region $\wv  \le R_\tau$.}  Here $h\left(\frac{\wv }{R_{\tau}}\right) = h'\left(\frac{\wv }{R_{\tau}}\right) = h''\left(\frac{\wv }{R_{\tau}}\right) =0$, hence $q(x,v)=\tau(x) \frac{|v|^2}{2}$.
Thus
\begin{equation*}
\sfL^\star q
=\tau\bigl(d-\wv ^{\beta-2}|v|^2\bigr)=\tau\bigl(d-\wv ^{\beta-2}(\wv ^2 -1)\bigr)
\le \left(d+2^{\beta-2} - \frac{\wv ^\beta}{2}\right) \tau.
\end{equation*}
\begin{equation*}
|\nabla_vq |^2=\tau^2|v|^2.
\end{equation*}

\smallskip
\noindent{\bf \# Region $R_\tau\le \wv  \le2R_\tau$.}  The choice of $R_0$ gives
\begin{equation}\label{eq:annulus-negative-bounds}
 d-\wv ^\beta+\wv ^{\beta-2}\le-\frac34\wv ^\beta,
 \qquad
 \wv \sfL^\star\wv \le-\frac12\wv ^\beta.
\end{equation}
Start from 
\begin{align*}
    \sfL^\star q    &= \eta(\tau(x),\cdot)\left( d-\wv ^{\beta-2}|v|^2\right) \\
    &+ (1-\tau(x)) \left(\frac{\sfL^\star \left(\wv \right)}{R_{\tau}}  \!h'\left(\frac{\wv }{R_{\tau}}\right) + h''\left(\frac{\wv }{R_{\tau}}\right)\frac{  \left\vert  v\right\vert^2}{\wv ^2 R_{\tau}^2} \right) \frac{|v|^2}{2}  \\
    &+ (1-\tau(x))h'\!\left(\frac{\wv }{R_{\tau}}\right) \frac{2 \vert v \vert^2}{\wv  R_{\tau}}\\
    &\leq \eta(\tau(x),\cdot)\left( d-\wv ^{\beta-2}|v|^2\right)+ 2  \Vert h'' \Vert_\infty+  (1-\tau(x)) \left(4+\frac{1}{4}\wv \sfL^\star \left(\wv \right) \right)  \!h'\left(\frac{\wv }{R_{\tau}}\right)\\
    &\leq - \frac34\eta(\tau(x),\cdot) \wv ^\beta + 2  \Vert h'' \Vert_\infty+  (1-\tau(x)) \left(4 -\frac18 \wv ^\beta \right)  \!h'\left(\frac{\wv }{R_{\tau}}\right)
\end{align*}
Since $\eta_\tau \geq \tau$, we have $R_\tau^\beta \eta_\tau > R_0^\beta \tau^{1-\nu \beta} > R_0^\beta$ because $\nu\beta>1$ and $0<\tau\le1$. Hence, all positive terms in the last r.h.s above are absorbed. This proves 
\begin{align*}
    \sfL^\star q &\leq -\frac{1}{16} \left( \eta(\tau(x),\cdot) +  (1-\tau(x)) h'\left(\frac{\wv }{R_{\tau}}\right) \right)\wv ^\beta
\end{align*}
in the annulus. Moreover
\begin{align*}
\left\vert \nabla_v q \right\vert^2& =\left\vert \eta(\tau(x),\cdot) \, +  \frac{\vert v\vert^2}{2} (1-\tau(x))h'\!\left(\frac{\wv }{R_{\tau}}\right) \frac{1}{\wv  R_{\tau}}\right\vert^2 \vert v\vert^2\\
&\leq\left( \eta(\tau(x),\cdot) \, +  (1-\tau(x))h'\!\left(\frac{\wv }{R_{\tau}}\right) \right) \wv ^\beta (1 +\Vert h' \Vert) \vert v\vert^{2-\beta} \\
& \leq \frac{1}{16}\left( \eta(\tau(x),\cdot) \, +  (1-\tau(x))h'\!\left(\frac{\wv }{R_{\tau}}\right) \right) \wv ^\beta, 
\end{align*}
given the value of $R_0$.

\smallskip
\noindent{\bf \# Region $\wv \ge2R_\tau$.}  Here $h=1$, hence $\eta_\tau=1$ and $q=\frac{|v|^2}{2}$.  Thus, the definition of $R_0$ implies,
\begin{equation*}
\sfL^\star q=d-\wv ^{\beta-2}|v|^2\le - \frac12\wv ^\beta,
\qquad
|\nabla_vq|^2=|v|^2\le \wv ^\beta,
\end{equation*}
This completes the proof, since $ \eta(\tau(x),\cdot) \, +  (1-\tau(x))h'\!\left(\frac{\wv }{R_{\tau}}\right) = 1$ in this zone.

Combining the three velocity regions gives, for all
$(x,v)\in \R^d \times \R^d$,
\begin{equation}\label{eq:q-velocity}
 \sfL^\star q
 \le(d+2^{\beta-2})\tau(x)
   -\frac1{16}D_{\tau(x)}(v).
\end{equation}

\smallskip
We now estimate $v\cdot\nabla_xq$ outside of $\mathcal{K}$. We shall split again according to two velocity zones. 

\medskip
\noindent{\bf \# Region $ \wv \le2R_\tau$.}
\medskip

We go back to the full formula
$$v\cdot\nabla_xq = \frac{|v|^2}{2} \left(1-h\!\left(\frac{\wv }{R_{\tau}}\right)-(1-\tau(x))h'\!\left(\frac{\wv }{R_{\tau}}\right)\frac{R'(\tau(x))\wv }{R_{\tau}^2}\right) \left( v\cdot\nabla_x \tau \right)$$
and use $R' = \frac{-\nu}{\tau} R_\tau$, $\vert \nabla_x \tau\vert \leq 2(1- \alpha)\frac{\tau}{\wx}$, $\left(1-h\!\left(\frac{\wv }{R_{\tau}}\right)\right)\tau \leq \eta_\tau$, $2 \nu < 1$ to get
\begin{align*}
\left\vert v\cdot\nabla_xq \right\vert &\leq (1-\alpha) \vert v\vert^3  \frac{\tau}{\wx}\left( \left(1-h\!\left(\frac{\wv }{R_{\tau}}\right)\right) +(1-\tau(x))h'\!\left(\frac{\wv }{R_{\tau}}\right)\frac{\nu R_\tau \cdot 2 R_\tau}{\tau R_{\tau}^2}\right)\\
&\leq\frac{ (1-\alpha) \vert v\vert^3}{\wx}\left( \left(1-h\!\left(\frac{\wv }{R_{\tau}}\right)\right)\tau + 2 \nu (1-\tau(x))h'\!\left(\frac{\wv }{R_{\tau}}\right)\right)\\
&\leq\frac{ (1-\alpha) \vert v\vert^3\wv ^{-\beta}}{\wx}\left( \eta_\tau + (1-\tau(x))h'\!\left(\frac{\wv }{R_{\tau}}\right)\right) \wv ^\beta\\
&\leq(1-\alpha)2^{(3-\beta)_+}R_0^{(3-\beta)_+}
 \wx^{-1+2\nu(1-\alpha)(3-\beta)_+}
  D_{\tau}(v) \leq \frac{1}{64}D_{\tau}(v)
\end{align*}
the last inequality coming from the first component of the definition of $\mathcal{K}$.

\medskip
\noindent{\bf \# Region $ \wv \ge2R_\tau$.}
\medskip
There, the coefficient $\eta(\tau(x),v)$ is equal to one,
so $v\cdot\nabla_xq=0$.  

Finally, it follows that, outside $\mathcal K$,
\begin{equation}\label{eq:q-x-transport}
 |v\cdot\nabla_xq|\le\frac1{64}D_{\tau(x)}(v).
\end{equation}

We finally estimate $\nabla_x V \cdot\nabla_v q$.
\begin{align*}
\left\vert\nabla_x V \cdot\nabla_v q \right\vert
&\leq \vert v\vert \vert \nabla V \vert \left( \eta(\tau(x),\cdot) + \frac{|v|^2}{2} h'\!\left(\frac{\wv }{R_{\tau}}\right) \frac{(1-\tau(x))}{\wv  R_{\tau}} \right)\\
& \leq \wv ^{1-\beta} \vert \nabla V \vert \left( \eta(\tau(x),\cdot) + (1-\tau(x)) h'\!\left(\frac{\wv }{R_{\tau}}\right)  \right) \wv ^\beta \leq \wx^{\alpha-1}\wv ^{1-\beta}D_{\tau}.
\end{align*}
The second component of $\mathcal K$ therefore gives, outside $\mathcal K$,
\begin{equation}\label{eq:q-force-transport}
 |\nabla V\cdot\nabla_vq|
 \le\frac1{64}D_{\tau}(v).
\end{equation}
Combining \eqref{eq:q-velocity}, \eqref{eq:q-x-transport}, and \eqref{eq:q-force-transport} proves \eqref{eq:q-full}.
\end{proof}

\subsection{Estimate on $\Phi$.}
\begin{lemma}\label{lem:macro}
Outside $\mathcal K$,
\begin{equation}\label{eq:phase-drift}
 \cL^\star\Phi
 \le
 -\frac{c_\chi}{128}
  \eps^{\frac{\beta-2}{\beta-1}}\tau
 -\frac{\eps}{128}D_{\tau}(v).
\end{equation}
\end{lemma}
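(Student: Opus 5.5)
The plan is to write $\cL^\star\Phi=\cL^\star F+\eps\,\cL^\star q$ and handle the two pieces separately. The $q$ part is already controlled outside $\mathcal K$ by \Cref{lem:q-estimates}: $\eps\cL^\star q\le \eps(d+2^{\beta-2})\tau-\frac{\eps}{32}D_\tau$. For $F=V+\nabla V\cdot\chi$, the cell equation $\sfL^\star\chi=-v$ of \Cref{thm:cell} gives the cancellation $v\cdot\nabla V+\nabla V\cdot\sfL^\star\chi=0$. Hence
\begin{equation*}
\cL^\star F=v^\top D_x^2V\,\chi-\nabla V^\top D_v\chi\,\nabla V .
\end{equation*}
By \eqref{eq:cell-coercivity} the second term is at most $-c_\chi\wv^{2-\beta}|\nabla V|^2\le-\frac{c_\chi}{2}\wv^{2-\beta}\tau$ once $\wx\ge\sqrt2$. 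For the first term I use $\|D_x^2V\|\le\wx^{-\alpha}\tau$ and $|\chi|\le C_\chi$ (since $\beta>2$, $A_\beta$ grows at most like $\wv^{3-\beta}$ or a logarithm, and one writes $|\chi(v)|\le C_\chi\wv^{(3-\beta)_+}$ or simply bounds it by a constant for $\beta>3$). This gives $|v^\top D_x^2V\chi|\le C_\chi\wx^{-\alpha}\wv^{1+(3-\beta)_+}\tau$.

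Everything then reduces to a pointwise comparison in two velocity regimes, with threshold $\wv\approx\eps^{-1/(\beta-1)}$.

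In the low-velocity regime $\wv\le\eps^{-1/(\beta-1)}$, the good term satisfies
\begin{equation*}
\frac{c_\chi}{2}\wv^{2-\beta}\tau\ge\frac{c_\chi}{2}\eps^{\frac{\beta-2}{\beta-1}}\tau .
\end{equation*}
This absorbs $\eps(d+2^{\beta-2})\tau$ by the second condition on $\eps$ in \eqref{eq:explicit-parameters}, which yields a factor $c_\chi/8$. It also absorbs the Hessian remainder, which is $C_\chi\wx^{-\alpha}\wv\,\tau$ up to harmless powers. That remainder is small against $c_\chi\wv^{2-\beta}\tau$ precisely when $\wx^\alpha\wv^{\beta-1}\gtrsim C_\chi/c_\chi$, and outside the third component of $\mathcal K$ we have $\wx^\alpha\ge\frac{4C_\chi}{c_\chi}\eps^{-\beta/(\beta-1)}$. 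Since $\wv\le\eps^{-1/(\beta-1)}$, this is exactly what is needed. In this regime the $-\frac{\eps}{32}D_\tau$ term is simply kept.

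In the high-velocity regime $\wv\ge\eps^{-1/(\beta-1)}$, the $F$-dissipation $\wv^{2-\beta}\tau$ is weak. Instead I use $D_\tau\ge\eta_\tau\wv^\beta\ge\tau\wv^\beta\ge\tau\eps^{-\beta/(\beta-1)}$, so that
\begin{equation*}
\frac{\eps}{64}D_\tau\ge\frac{1}{64}\eps^{-\frac{1}{\beta-1}}\tau .
\end{equation*}
This dominates both $\eps(d+2^{\beta-2})\tau$, by the third condition on $\eps$, and a $\frac{c_\chi}{128}\eps^{\frac{\beta-2}{\beta-1}}\tau$ target term, because $\eps\le c_\chi^{-1}$ makes $c_\chi\eps^{\frac{\beta-2}{\beta-1}}\le\eps^{-1/(\beta-1)}$. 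The Hessian remainder $C_\chi\wx^{-\alpha}\wv^{1+(3-\beta)_+}\tau$ is bounded by $\frac{\eps}{128}\tau\wv^\beta$ exactly when $\wx^\alpha\wv^{\beta-1-(3-\beta)_+}\ge128C_\chi/\eps$. This should be guaranteed by the fourth component of $\mathcal K$; I would check that the exponent $\beta-2$ there matches after bounding $|\chi|\le C_\chi$, using $\wv\ge1$. I also keep the $-\frac{c_\chi}{2}\wv^{2-\beta}\tau$ term, which is nonpositive.

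Summing the two regimes should give the constants $\frac{c_\chi}{128}$ and $\frac{\eps}{128}$ after splitting $\frac{\eps}{32}D_\tau$ into pieces. The main obstacle is the bookkeeping of the Hessian cross term $v^\top D_x^2V\chi$ at large velocities, since $\chi$ may grow when $\beta<3$, and matching it exactly to the components of $\mathcal K$. I would first try bounding $|\chi|\le C_\chi\wv^{(3-\beta)_+}$. If the fourth component of $\mathcal K$ does not directly cover it, I would borrow part of $D_\tau$, with the factor $h'$ in $D_\tau$ handling the transition annulus.
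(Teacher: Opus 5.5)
Your proposal is correct and follows the paper's proof essentially step for step. You use the same cancellation $\nabla V\cdot\sfL^\star\chi=-\nabla V\cdot v$, the same split at $\wv=\eps^{-1/(\beta-1)}$, the third component of $\mathcal K$ together with the second entry of the $\eps$-minimum at low velocity, and the fourth component together with $\eps^{-\beta/(\beta-1)}\ge128(d+2^{\beta-2})$ and $\eps\le c_\chi^{-1}$ at high velocity, ending with the same halving of $\frac{\eps}{64}D_\tau$.

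Two small slips need fixing. First, the low-velocity condition should read $\wx^\alpha\ge\frac{4C_\chi}{c_\chi}\wv^{\beta-1}$, not $\wx^\alpha\wv^{\beta-1}\gtrsim C_\chi/c_\chi$; your subsequent use of the upper bound on $\wv$ is consistent with the corrected form. Second, at $\beta=3$ the growth bound on $\chi$ carries a logarithm, so $|\chi|\le C_\chi\wv^{(3-\beta)_+}$ fails there. The paper avoids both issues by using $|\chi(v)|\le C_\chi|v|$, so the Hessian remainder is $C_\chi\wx^{-\alpha}\wv^{2}\tau$, whose exponent matches the $\beta-2$ in the fourth component of $\mathcal K$ exactly.
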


\begin{proof}[{\bf Proof of \Cref{lem:macro}}]
Using, 
\begin{equation*}
\sfT V=v\cdot \nabla_x V,
\qquad
\sfL^\star(\nabla_x V\cdot\chi)=\nabla_x V\cdot\sfL^\star\chi=-\nabla_x V\cdot v,
\end{equation*}
by definition of $\chi$ and
\begin{equation*}
\sfT(\nabla_x V\cdot\chi)=v^\top \text{Hess}(V)\chi-\nabla_x V^\top\nabla_v\chi \nabla_x V,
\end{equation*}
we get
\begin{equation*}\label{eq:L-A}
\cL^\star F
= v^\top\text{Hess}(V)\chi -\nabla_x V^\top\nabla_v\chi \nabla V.
\end{equation*}
Note that, by \eqref{eq:cell-coercivity},
\begin{equation*}
-\nabla_x V^\top\nabla_v\chi \nabla_x V\le -c_\chi \vert \nabla_x V \vert^2 \wv ^{2-\beta},
\end{equation*}
and thus 
\begin{equation*}
\cL^\star F
\leq v^\top\text{Hess}(V)\chi -c_\chi \vert \nabla_x V \vert^2 \wv ^{2-\beta}.
\end{equation*}
Finally,
\begin{equation*}
\cL^\star\Phi = \cL^\star F + \varepsilon \cL^\star q \leq v^\top\text{Hess}(V)\chi  +\eps(d+2^{\beta-2})\tau -c_\chi \vert \nabla_x V \vert^2 \wv ^{2-\beta}
   -\frac{\eps D_{\tau }}{32},
\end{equation*}
recalling \eqref{eq:q-full}. Estimate the latter depending on the size of $\wv $. 

\medskip
\noindent{\bf \# Region $\{\wv \le \varepsilon^{\frac{1}{1-\beta}}\}$.} 
Outside the third component of $\mathcal K$, one has
$\wx^\alpha\ge
 \frac4{c_\chi}\eps^{-\frac{\beta}{\beta-1}}$.
This inequality also implies $\wx\ge\sqrt2$ in view of the choice of $\eps$.  Hence the basic bounds on $V$ give, 
\begin{align*}
 \cL^\star F
 &\le
 \tau(x)\wx^{-\alpha}
 \eps^{-\frac2{\beta-1}}
 -\frac{c_\chi}{2}
  \eps^{\frac{\beta-2}{\beta-1}}\tau
\le
 -\frac{c_\chi}{4}
  \eps^{\frac{\beta-2}{\beta-1}}\tau.
\end{align*}
Using the first smallness condition on $\eps$ in
\eqref{eq:explicit-parameters}, namely
\begin{equation*}
 \eps(d+2^{\beta-2})
 \le\frac{c_\chi}{8}
      \eps^{\frac{\beta-2}{\beta-1}},
\end{equation*}
we obtain
\begin{equation}\label{eq:phase-low}
 \cL^\star\Phi
 \le
 -\frac{c_\chi}{8}
  \eps^{\frac{\beta-2}{\beta-1}}\tau(x)
 -\frac{\eps}{32}D_{\tau(x)}(v).
\end{equation}

\medskip
\noindent{\bf \# Region $\{\wv \ge \varepsilon^{\frac{1}{1-\beta}}\}$.} 

Outside the fourth component of $\mathcal K$, $\wx ^\alpha\wv ^{\beta-2}\ge\frac{128}{\eps}$. Estimate
\begin{equation*}
|v^\top\text{Hess}(V)(x)\chi(v)|
\le C_\chi \tau(x)\wx ^{-\alpha}\wv ^2 = C_\chi \wx ^{-\alpha}\wv ^{2-\beta}\wv ^\beta\tau(x) \leq\frac{\eps}{128}D_{\tau}(v).
\end{equation*}

After discarding the nonpositive coercive term,
\begin{equation*}
 \cL^\star\Phi
 \le\eps(d+2^{\beta-2})\tau
   -\frac{3\eps}{128}D_{\tau}(v).
\end{equation*}
The second smallness condition on $\eps$ in
\eqref{eq:explicit-parameters} gives
\begin{equation*}
 D_{\tau}(v)
 \ge\tau\wv ^\beta
 \ge128(d+2^{\beta-2})\tau,
\end{equation*}
and hence
\begin{equation}\label{eq:phase-high}
 \cL^\star\Phi
 \le-\frac{\eps}{64}D_{\tau(x)}(v).
\end{equation}
Since, in the present region,
\begin{equation*}
 \eps D_{\tau(x)}(v)
 \ge\eps^{-\frac1{\beta-1}}\tau(x)
 \ge c_\chi\eps^{\frac{\beta-2}{\beta-1}}\tau(x),
\end{equation*}
splitting the right-hand side of \eqref{eq:phase-high} into two equal
parts proves \eqref{eq:phase-drift}.

Gathering both regions, and pointing out that the low-velocity estimate
\eqref{eq:phase-low} is stronger than \eqref{eq:phase-drift}, the proof is
complete.
\end{proof}

\subsection{Proof of \Cref{thm:beta_gt2_alphalt1}}

We have
\begin{equation*}
\nabla_v\Phi=(D_v\chi)^\top \nabla_x V+\eps\nabla_vq.
\end{equation*}
Using boundedness of $D_v\chi$, $|
\nabla_x V|^2\le \tau$, and \eqref{eq:q-gradient},
\begin{equation}\label{eq:grad-Phi-bound}
|\nabla_v\Phi|^2
\leq 2 \Vert D_v\chi\Vert_\infty^2\tau+2(1+\Vert h' \Vert_\infty)\eps^2D_\tau.
\end{equation}
The explicit value of $\kappa$ in \eqref{eq:explicit-parameters} therefore
implies, outside $\mathcal K$,
\begin{equation}\label{eq:phase-with-square}
 \cL^\star\Phi+\kappa|\nabla_v\Phi|^2
 \le
 -\frac{c_\chi}{256}
  \eps^{\frac{\beta-2}{\beta-1}}\tau
 -\frac{\eps}{256}D_{\tau}.
\end{equation}
Indeed, $2\kappa\Vert D_v\chi\Vert_\infty^2 \leq \frac{c_\chi}{256}
  \eps^{\frac{\beta-2}{\beta-1}}$ and $2 \kappa(1+\Vert h' \Vert_\infty)\eps^2
 =\frac{\eps}{256}$.

Let us prove that $m$ has compact level sets, and give some bounds on $\Phi$. Since $|
\nabla_x V|^2\le \tau$, and Young's inequality,
\begin{equation*}
|\nabla V(x)\cdot\chi(v)|
\le C_\chi |\nabla V(x)||v|
\le  \frac{\eps}{8}\tau(x)|v|^2+\frac{2C_\chi^2}{\eps}
\le \frac{\eps}{4}q(x,v)+\frac{2C_\chi^2}{\eps},
\end{equation*}
because $q\ge \tau \frac{|v|^2}{2}$. Thus, 
\begin{equation*}
 \Phi_0+ V(x)+\frac{3\eps}{4}q(x,v)-\frac{2C_\chi^2}{\eps} \leq \Phi(x,v) \leq \Phi_0+ V(x)+\frac{5\eps}{4}q(x,v)+\frac{2C_\chi^2}{\eps}.
\end{equation*}
Consequently, for the choice of $\Phi_0$, $\Phi\ge1$, $\Phi\ge V$, and $\Phi$ tends to $+\infty$ at
infinity.  Thus $m=e^{\kappa\Phi}$ is positive and coercive. By the chain rule and \eqref{eq:phase-with-square}, outside
$\mathcal K$,
\begin{align*}
 \cL^\star m =\kappa m\left(\cL^\star\Phi
                 +\kappa|\nabla_v\Phi|^2\right)\le
 -\frac{c_\chi}{256}
  \eps^{\frac{\beta-2}{\beta-1}}
  \kappa m\tau.
\end{align*}
It remains to convert this into a scalar weak Lyapunov form. Since $\Phi\ge V$,
\begin{equation*}
 \tau=\alpha^{-\frac{2(1-\alpha)}{\alpha}}V^{-\frac{2(1-\alpha)}{\alpha}}\ge\Phi^{-\frac{2(1-\alpha)}{\alpha}}.
\end{equation*}
Moreover, $\ln m=\kappa\Phi$ and
$\ln(e+m)\ge\ln m$, so
\begin{equation*}
 \Phi^{-\frac{2(1-\alpha)}{\alpha}}
 =\kappa^{\frac{2(1-\alpha)}{\alpha}}(\ln m)^{-\frac{2(1-\alpha)}{\alpha}}
 \ge\kappa^{\frac{2(1-\alpha)}{\alpha}}(\ln(e+m))^{-\frac{2(1-\alpha)}{\alpha}}.
\end{equation*}
Consequently, outside $\mathcal K$,
\begin{equation*}
 \cL^\star m
 \le
 -\frac{c_\chi}{256}
  \eps^{\frac{\beta-2}{\beta-1}}
  \kappa^{1+\frac{2(1-\alpha)}{\alpha}}
  \frac{m}{(\ln(e+m))^{\frac{2(1-\alpha)}{\alpha}}}.
\end{equation*}
Since $\mathcal K\subset\{E\le R\}$ by \eqref{eq:R-final}, the definition
\eqref{eq:C-final} of $C$ gives the global estimate
\eqref{eq:main-drift} and completes the proof of \Cref{thm:beta_gt2_alphalt1}.
 
\section{The case \texorpdfstring{$\alpha <1$}{} and \texorpdfstring{$\beta < 2$}{}}\label{sec:Lyapunov4}

In this section, we describe the construction of a candidate for a Lyapunov functional when $\alpha < 1$ and $0 <\beta < 2$.

With the convention that a quotient by $(1-\beta)_+=0$ equals $+\infty$, set
\begin{equation}\label{eq:a-choice}
 a:=\frac18\min\left\{
 1,\ \frac{1}{\alpha(3-\beta)},\
 \frac{\frac1\alpha-1}{(1-\beta)_+}
 \right\}, \qquad \omega:=\frac18\min\left\{
 \frac{a\beta}{1+\beta},\
 \frac{(2-\beta)(\frac12-a)}{1+\beta}
 \right\}.
\end{equation}
We provide an explicit value for these just to convince that all conditions that it should satisfy are not void, but actually thinking of them as needing to be $\omega < a$ sufficiently small is enough to understand the proof. 
Define
\begin{equation}\label{eq:z-def}
\Theta(x,v):=\frac{\wv  }{E(x,v)^a}.
\end{equation}
Observe that given that $E$ is large, if $\Theta$ is large then so does $v$. On the contrary, is $\Theta$ is small then $v$ is bounded, so $x$ is large. This will be quantified later on. 

We take an increasing function $\zeta \in C^2((0,\infty))$ such that
\begin{equation*}
 0<\zeta\le1,
 \qquad \zeta(s)=e^{-\frac{1}{s}}\quad(0<s\le1),
 \qquad \zeta(s)=1\quad(s\ge 2),
 \qquad \zeta'\ge0.
\end{equation*}
On $(0,1]$,
\begin{equation}\label{eq:zeta-derivatives}
 \zeta'(s)= s^{-2}\zeta(s),
 \qquad
 \zeta''(s)=\left(s^{-4}-2 s^{-3}\right)\zeta(s),
\end{equation}
and hence
\begin{equation*}
 (\zeta''(s))_+\le s^{-4}\zeta(s),\qquad 0<s\le1.
\end{equation*}
On the fixed interval $[1,2]$ we choose the interpolation so that
\begin{equation*}
 |\zeta'(s)|+|\zeta''(s)|\le C,
 \qquad 1\le s\le 2.
\end{equation*}

We are now ready to define $m$ properly. Start with setting
\begin{equation}\label{eq:A-B}
F:=V^\frac{\beta}{2}+\nabla(V^\frac{\beta}{2})\cdot\chi,
\end{equation}
where $\chi$ is again given by \Cref{thm:cell}, and the interpolated phase
\begin{equation*}
 \Phi:=\Phi_0 + (1-\zeta(\Theta))F + \zeta(\Theta)E^\frac{\beta}{2},
 \qquad
 m:=e^{\kappa\Phi}.
\end{equation*}
The following theorem establishes \Cref{thm:main-lyapunov} in the present parameter regime. \begin{theorem}\label{thm:Lyap-alphalt1-betalt2}
There exists $\kappa_0>0$ such that, for each fixed
\begin{equation*}
0<\kappa\le\kappa_0,
\end{equation*}
there are constants $c>0$, $C>0$, and $R>0$ for which $m=e^{\kappa\Phi}$ has compact sublevel sets and satisfies,
\begin{equation}\label{eq:weak-main}
 \cLs m
 \le C\one_{\{E\le R\}}
 -c \kappa^{1+\sigma} \frac{m}{(\ln(e+m))^{\sigma}},
\end{equation}
with
\begin{equation}\label{eq:sigma}
 \sigma=\frac{1-\frac{\beta}{2}+\frac{2(1-\alpha)}{\alpha}}{\frac{\beta}{2}}.
\end{equation}
\end{theorem}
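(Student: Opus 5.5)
The plan follows the scheme of \Cref{thm:beta_gt2_alphalt1}. Since $m=e^{\kappa\Phi}$, we have $\cLs m=\kappa m\,(\cLs\Phi+\kappa|\nabla_v\Phi|^2)$. It therefore suffices to prove, outside a compact set,
\begin{equation*}
\cLs\Phi\le -c\,\mathcal D,\qquad |\nabla_v\Phi|^2\le C\mathcal D,
\end{equation*}
for a dissipation scale $\mathcal D$ bounded below by $c\,\Phi^{-\sigma}$. The scalar form then follows as before, using $\Phi\asymp E^{\beta/2}$, $\ln m=\kappa\Phi$ and $\kappa\le\kappa_0$. The comparison $\Phi\asymp E^{\beta/2}$ comes from the bound $|\nabla(V^{\beta/2})\cdot\chi|\lesssim V^{\frac\beta2-1}|\nabla V|A_\beta(\wv)$ on the region $\zeta(\Theta)<1$, where $\wv\lesssim E^a$. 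There the correction is $o(V^{\beta/2})$ because $a$ is small; this is the role of the condition $a\alpha(3-\beta)<1$. Together with $\Phi_0$ large, this yields positivity, coercivity and compact sublevel sets. The natural scale is
\begin{equation*}
\mathcal D=(1-\zeta)V^{\frac\beta2-1}|\nabla V|^2\wv^{2-\beta}+\zeta E^{\frac\beta2-1}\wv^\beta .
\end{equation*}
Both pieces are at least of order $E^{\frac\beta2-1-\frac{2(1-\alpha)}\alpha}\asymp\Phi^{-\sigma}$, since $\wv^\beta\gtrsim E^{a\beta}$ wherever $\zeta$ is not negligible.

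Next I expand, by the Leibniz rule,
\begin{equation*}
\cLs\Phi=(1-\zeta)\cLs F+\zeta\,\cLs E^{\frac\beta2}+\bigl(E^{\frac\beta2}-F\bigr)\cLs\zeta(\Theta)+2\nabla_v\zeta(\Theta)\cdot\nabla_v\bigl(E^{\frac\beta2}-F\bigr).
\end{equation*}
The first term is handled as in \Cref{lem:macro}. The cell cancellation gives
\begin{equation*}
\cLs F=v^\top D^2_x(V^{\beta/2})\chi-\tfrac\beta2 V^{\frac\beta2-1}\nabla V^\top D_v\chi\nabla V,
\end{equation*}
and \eqref{eq:cell-coercivity} makes the second term at most $-c\,V^{\frac\beta2-1}|\nabla V|^2\wv^{2-\beta}$. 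The Hessian term is bounded by $V^{\frac\beta2-1}\wx^{\alpha-2}\wv A_\beta(\wv)$ plus a lower-order term. It is negligible relative to the good term as long as $\wv^{\beta+1}\ll\wx^{\alpha}$, i.e. on $\Theta\lesssim1$, which holds since $a(1+\beta)$ is small against $\alpha\cdot$ const. For the second term, using $\sfT E=0$,
\begin{equation*}
\cLs E^{\beta/2}=\tfrac\beta2E^{\frac\beta2-1}\bigl(d-\wv^{\beta-2}|v|^2\bigr)+\tfrac\beta2\bigl(\tfrac\beta2-1\bigr)E^{\frac\beta2-2}|v|^2,
\end{equation*}
whose last piece is nonpositive because $\beta<2$. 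This gives $-cE^{\frac\beta2-1}\wv^\beta$ once $\wv$ is large, which is guaranteed on $\{\zeta>0\}$ up to the flat region.

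The main obstacle is the pair of commutator terms involving derivatives of $\zeta(\Theta)$. For these I would use:
\begin{itemize}
\item $\sfT\Theta=\sfT\wv\,E^{-a}=-\nabla V\cdot v\,\wv^{-1}E^{-a}$, so $|\sfT\Theta|\lesssim|\nabla V|\,E^{-a}$;
\item $\nabla_v\Theta=O(E^{-a})$, up to a relatively smaller piece $a\Theta E^{-1}v$;
\item $\sfL^\star\Theta\approx-\wv^{\beta-1}E^{-a}$, which has a good sign and multiplies $\zeta'$.
\end{itemize}
The size of $E^{\beta/2}-F$ must also be controlled. When $\Theta\le1$, $E^{\beta/2}-V^{\beta/2}\approx\tfrac\beta2V^{\frac\beta2-1}\tfrac{|v|^2}2$ and $|\nabla(V^{\beta/2})\cdot\chi|$ is comparably small. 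For $\Theta\le1$ the factors $\zeta'(\Theta)=\Theta^{-2}\zeta$ and $(\zeta'')_+\le\Theta^{-4}\zeta$ are superpolynomially small, because of $e^{-1/\Theta}$. Hence any polynomial loss in $\Theta^{-1}$, that is in $E^a/\wv$, is absorbed by $e^{-1/\Theta}$ against the good term $(1-\zeta)V^{\frac\beta2-1}|\nabla V|^2\wv^{2-\beta}$, provided the total polynomial loss is at most $E^{\omega}$-type. I would verify this with $\omega$ playing the role of the admissible error exponent, using $\omega\le\frac{a\beta}{8(1+\beta)}$. On $1\le\Theta\le2$, $\wv\asymp E^a$, so everything is explicit. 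The commutators are of order $E^{\frac\beta2-1}\wv^2E^{-2a}$ and $E^{\frac\beta2-1}|\nabla V|\wv E^{-a}$, and these must be dominated by $\zeta E^{\frac\beta2-1}\wv^\beta\asymp E^{\frac\beta2-1+a\beta}$. This is where the constraint $a<\frac12$, through $(2-\beta)(\frac12-a)$, and the sign of $-\zeta'\wv^{\beta-1}E^{-a}(E^{\beta/2}-F)$ are needed. This is the step I expect to require the most care; the first thing I would try is to bound $E^{\beta/2}-F\ge0$ there, so that the $\sfL^\star\Theta$ contribution is actually helpful.

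Finally I would bound
\begin{equation*}
|\nabla_v\Phi|^2\lesssim(1-\zeta)V^{\beta-2}|\nabla V|^2|D_v\chi|^2+\zeta E^{\beta-2}|v|^2+\text{commutator terms}.
\end{equation*}
Each piece is controlled by $\mathcal D$, using $|D_v\chi|\lesssim\wv^{2-\beta}$ together with $V^{\frac\beta2-1}\wv^{2-\beta}$ bounded, respectively $E^{\frac\beta2-1}|v|^2\wv^{-\beta}\lesssim1$. Then $\kappa_0$ is chosen small so that $\kappa|\nabla_v\Phi|^2\le\frac12c\,\mathcal D$. Setting $R$ to cover the exceptional compact set and defining $C$ as a maximum over $\{E\le R\}$, exactly as in \eqref{eq:C-final}, gives \eqref{eq:weak-main}.
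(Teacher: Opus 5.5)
Your overall architecture is the paper's: the chain rule for $e^{\kappa\Phi}$, the dissipation scale $\mathcal D$, the Leibniz expansion of $\cLs\Phi$, the cell cancellation for $F$, and $\Phi\asymp E^{\beta/2}$ via $a\alpha(3-\beta)<1$. The gap is in how you handle the transition terms on $\{\Theta\le1\}$. You claim that $\zeta'=\Theta^{-2}\zeta$ and $(\zeta'')_+\le\Theta^{-4}\zeta$ are superpolynomially small there, and you absorb them against the $F$-dissipation $(1-\zeta)V^{\frac\beta2-1}|\nabla V|^2\wv^{2-\beta}$. But $e^{-1/\Theta}$ beats a power of $E$ only when $\Theta\lesssim1/\ln E$; for $\Theta$ of order one, $\zeta,\zeta',\zeta''$ are of order one. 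In that range the cross term $2\zeta'(\Theta)\nabla_v\Theta\cdot\nabla_v(E^{\beta/2}-F)\approx\beta\,\zeta'(\Theta)\,\Theta\, E^{\frac\beta2-1}$ is \emph{positive} and of size $E^{\frac\beta2-1}$, and so is the $(\zeta'')_+$-term. The $F$-dissipation there is only of size $E^{\frac\beta2-1-\frac{2(1-\alpha)}{\alpha}+a(2-\beta)}$. Since $a(2-\beta)<\frac14$, it cannot absorb these errors in general, for instance for every $\alpha\le\frac89$. The reason is that the $F$-term carries the degenerate factor $|\nabla V|^2$ and the transition errors do not. You invoke the right mechanism (energy dissipation and the sign of $\sfL^\star\Theta$) only on $1\le\Theta\le2$, but it is needed earlier.

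The missing step is the split at $\Theta=E^{-\omega}$ that the paper uses. For $\Theta\le E^{-\omega}$ one has $\zeta(\Theta)\le e^{-E^{\omega}}$, and there every term carrying $\zeta,\zeta',\zeta''$ is indeed negligible against the $F$-term. For $E^{-\omega}\le\Theta\le1$ these terms must instead be absorbed by the $\zeta$-weighted good terms:
\begin{itemize}
\item the energy dissipation $\zeta\,\cLs E^{\beta/2}\le-c\,\zeta E^{\frac\beta2-1}\wv^\beta$, which is available because $\wv\ge E^{a-\omega}$;
\item above all, the negative term $\zeta'(\Theta)(\cLs\Theta)(E^{\beta/2}-F)\le-c\,\zeta\,\Theta^{\beta-1}E^{\frac\beta2-1+a\beta}$, which relies on $E^{\beta/2}-F\asymp\Theta^2E^{\frac\beta2-1+2a}>0$.
\end{itemize}
The cost is then only powers of $\Theta^{-1}\le E^{\omega}$. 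For example, $(\zeta'')_+|\nabla_v\Theta|^2(E^{\beta/2}-F)\lesssim \Theta^{-1-\beta}E^{-a\beta}\cdot\zeta\,\Theta^{\beta-1}E^{\frac\beta2-1+a\beta}$, with $\Theta^{-1-\beta}E^{-a\beta}\le E^{\omega(1+\beta)-a\beta}\to0$. This, not a comparison with $e^{-1/\Theta}$, is what the condition $\omega\le\frac{a\beta}{8(1+\beta)}$ is for.

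Two smaller points:
\begin{itemize}
\item To keep the good sign of $\cLs\Theta$, you must dominate the transport part $|\sfT\Theta|\lesssim|\nabla V|E^{-a}$ by $\wv^{\beta-1}E^{-a}$ with $\wv\le2E^a$. For $\beta<1$ this is exactly the constraint $a(1-\beta)<\frac1\alpha-1$, which you never invoke.
\item The Hessian term in $\cLs F$ is negligible if and only if $\wv^2\ll\wx^\alpha$, not $\wv^{\beta+1}\ll\wx^\alpha$. This holds on $\Theta\le2$ simply because $a<\frac12$.
\end{itemize}
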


\subsection{Elementary estimates for \texorpdfstring{$\Theta$}{\Theta}}

We shall first record some estimates that will be used later on during the proofs. 

\begin{lemma}\label{lem:infoTheta}
There is $R_\Theta>0$ such that, whenever $E \geq R_\Theta$,
\begin{enumerate}
    \item if $\Theta\le 2$ then $E\asymp V$. 
\item whenever $\Theta\le 2$ and $a<\frac12$,
\begin{equation*}
|\nabla_v\Theta|\lesssim E^{-a}, \qquad  |\nabla_v^2\Theta|\lesssim \Theta^{-1}E^{-2a}.    
\end{equation*}
\item There are $c,R>0$ such that, whenever $E\ge R$ and $E^{-\omega}\le \Theta \le2$,
\begin{equation*}
 \cLs \Theta\le- \frac{\Theta^{\beta-1}}{4}E^{-a(2-\beta)}.
\end{equation*}
\end{enumerate}   
\end{lemma}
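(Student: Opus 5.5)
The three items are elementary computations, and I would prove them in order, using throughout that on $\{\Theta\le 2\}$ one has $\wv\le 2E^a$ with $a<\frac12$, so $\wv$ is negligible against $E^{1/2}$ once $E$ is large.

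\textbf{Item (1).} Since $\Theta\le2$,
\begin{equation*}
\frac{|v|^2}{2}\le \frac{\wv^2}{2}\le 2E^{2a}\le \frac{E}{2}
\end{equation*}
as soon as $E^{1-2a}\ge4$. Hence $V=E-\frac{|v|^2}{2}\ge \frac E2$, and together with $V\le E$ this gives $E\asymp V$. This fixes a first threshold for $R_\Theta$.

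\textbf{Item (2).} Since $\nabla_vE=v$, I would differentiate directly:
\begin{equation*}
\nabla_v\Theta=\frac{v}{\wv E^a}-a\,\wv E^{-a-1}v .
\end{equation*}
The first term is at most $E^{-a}$. The second is at most $a\Theta|v|E^{-1}\lesssim E^{a-1}\le E^{-a}$, because $a<\frac12$.

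For the Hessian, I would group the terms by which factor is differentiated.
\begin{itemize}
\item Differentiating $v/\wv$ costs a factor $\wv^{-1}$. Multiplied by $E^{-a}$, this gives exactly $\wv^{-1}E^{-a}=\Theta^{-1}E^{-2a}$.
\item The remaining terms are of size $E^{-a-1}$, $\wv^2E^{-a-2}$ or $E^{-a-1}|v|^2\wv^{-1}$.
\item Each of these is $\lesssim \wv^{-1}E^{-a}$, because $\wv^2\lesssim E$ and $\wv\le E$.
\end{itemize}

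\textbf{Item (3).} This is the core of the lemma. I would write $\Theta=\wv\,E^{-a}$ and expand $\cLs\Theta$ with the product rule $\sfL^\star(fg)=f\sfL^\star g+g\sfL^\star f+2\nabla_vf\cdot\nabla_vg$, using $\sfT E=0$. The ingredients are:
\begin{itemize}
\item $\sfT\Theta=-E^{-a}\,\nabla V\cdot v/\wv$;
\item the identity $\wv\sfL^\star\wv=d-1-\wv^\beta+\wv^{-2}(\wv^\beta+1)$ recorded in \Cref{sec:Lyapunov3};
\item $\sfL^\star(E^{-a})=-aE^{-a-1}\bigl(d-\wv^{\beta-2}|v|^2\bigr)+a(a+1)E^{-a-2}|v|^2$;
\item the cross term $-2aE^{-a-1}|v|^2/\wv$.
\end{itemize}
The leading term is $-\wv^{\beta-1}E^{-a}$, which equals $-\Theta^{\beta-1}E^{-a(2-\beta)}$ since $\wv=\Theta E^a$. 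I would show that every other term is at most a small fraction of it.

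First I would use $\Theta\ge E^{-\omega}$ to get $\wv\ge E^{a-\omega}\to\infty$ (recall $\omega<a$). Then:
\begin{itemize}
\item The lower-order part of $\sfL^\star\wv$ is $O(\wv^{-1})E^{-a}$, which has relative size $\wv^{-\beta}\to0$.
\item The terms coming from $E^{-a}$ and from the cross term have relative size $\lesssim \wv^2/E\lesssim E^{2a-1}\to0$.
\end{itemize}

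The transport term is the genuine obstacle, because it is not of velocity type. It is bounded by $\wx^{\alpha-1}E^{-a}$, and by item (1) this is $\lesssim E^{-\frac{1-\alpha}{\alpha}-a}$. I would compare it with $\Theta^{\beta-1}E^{-a(2-\beta)}$ separately in two cases.
\begin{itemize}
\item \emph{Case $\beta<1$.} Here $\Theta^{\beta-1}\ge2^{\beta-1}$, so I need $\frac{1-\alpha}{\alpha}>a(1-\beta)$. This is guaranteed by the third entry $\frac{\frac1\alpha-1}{(1-\beta)_+}$ in the definition \eqref{eq:a-choice} of $a$.
\item \emph{Case $\beta\ge1$.} Here $\Theta^{\beta-1}\ge E^{-\omega(\beta-1)}$, so I need $\frac{1-\alpha}{\alpha}+a(\beta-1)>\omega(\beta-1)$. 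This holds because $\omega<a$.
\end{itemize}

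Once every error is at most, say, a quarter of the main term beyond some threshold $R$, the claimed bound $\cLs\Theta\le-\frac14\Theta^{\beta-1}E^{-a(2-\beta)}$ follows.
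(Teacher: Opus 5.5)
Your proposal is correct and follows essentially the same route as the paper: item (1) comes from $\wv^2\le 4E^{2a}\ll E$, item (2) from direct differentiation of $\Theta=\wv E^{-a}$, and item (3) from the exact expansion of $\cLs\Theta$. In item (3) the leading term is $-\wv^{\beta-1}E^{-a}=-\Theta^{\beta-1}E^{-a(2-\beta)}$, and the remaining velocity terms are relatively small because $E^{a-\omega}\le\wv\le 2E^a$. The transport term $|\nabla V|E^{-a}$ is handled by the same split $\beta<1$ versus $\beta\ge1$ on $\Theta^{1-\beta}$ that the paper encodes in the exponent $1-\frac1\alpha+a(1-\beta)+\omega(\beta-1)_+$.

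There is one small bookkeeping slip in item (2). Differentiating $aE^{-a-1}\wv\, v$ produces terms of size $\wv E^{-a-1}$ and $\wv^3E^{-a-2}$, not $E^{-a-1}$ and $\wv^2E^{-a-2}$. Both are still $\lesssim \wv^{-1}E^{-a}=\Theta^{-1}E^{-2a}$ since $\wv^2\lesssim E$, so the conclusion is unaffected.
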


\begin{proof}[{\bf Proof of \Cref{lem:infoTheta}}]
The first identity is rather direct, given that if $\Theta\le 2$, $\wv ^2 \leq 2^2 E^{2a} \leq 4 R^{2a-1} E$, when $E \geq R$ since $a < \frac12$. As a consequence, $E(x,v) = \frac{\vert v \vert^2}{2} + V(x) \leq 2 R^{2a-1} E + V(x)$ and thus $$V \leq E \leq \left( 1 - 2 R^{2a-1} \right)^{-1} V,$$
when $E \geq R > 2^{\frac{1}{1-2a}}$.

Since $\Theta=\wv   E^{-a}$,  
\begin{equation}\label{eq:grad-z-exact}
 \nabla_v\Theta=E^{-a}\left(1-a \frac{\wv ^2}{E}\right)\frac{v}{\wv  }.
\end{equation}
Since, when $\Theta\le 2$, $\wv  \leq 2 E^a$, or equivalently $\wv ^2 \leq 2^2 E^{2a}$, we have $a\frac{\wv ^2}{E}\leq 4 a R^{2a-1} < 2R^{2a-1}$ and thus
\begin{equation}\label{eq:grad-z}
 |\nabla_v\Theta|\le E^{-a}
\end{equation}
when $E \geq R > 2^{\frac{1}{1-2a}}$. The second derivative is
\begin{align}\label{eq:hess-z-exact}
 E^{a}\nabla_v^2\Theta &= \nabla_v^2\wv  
 -a\left(2\frac{v\otimes v}{\wv ^2 }
 +I \right)\wv  E^{-1} +a(a+1)\wv  E^{-2}v\otimes v \notag\\
 &= \nabla_v^2\wv  
 -a\left(2\frac{v\otimes v}{\wv ^2 }
 +I \right)\Theta E^{a-1} +a(a+1)\Theta^3 E^{3a-2}\frac{v\otimes v}{\wv ^2 }\notag\\
 &= \Theta^{-1}E^{-a}\left(\Theta E^{a}\nabla_v^2\wv  
 -a\left(2\frac{v\otimes v}{\wv ^2 }
 +I \right) \Theta^2 E^{2a-1} +a(a+1)\Theta^4 E^{4a-2}\frac{v\otimes v}{\wv ^2 }\right).
\end{align}
Since $\nabla_v^2\wv $ is of order $\wv ^{-1} =\Theta^{-1}E^{-a}$, on $\Theta\le 2$,
\begin{equation}\label{eq:hess-z}
 |\nabla_v^2\Theta|\lesssim \Theta^{-1}E^{-2a},
\end{equation}
when $E \geq R$ is large.


The exact formula for $\cLs \Theta$ is
\begin{equation}\label{eq:Lz-exact}
\begin{aligned}
 \cLs \Theta={}& E^{-a}\left(\frac d{\wv  }- \left( 1 + \wv ^\beta\right) \frac{|v|^2}{\wv  ^3}- \frac{v \cdot \nabla V}{\wv  }\right)\\
 &-a\wv  E^{-a-1}\left(d +\left( 2 -\wv  ^{\beta}\right) \frac{|v|^2}{\wv  ^2}\right)
 +a(a+1)\wv  E^{-a-2}|v|^2.
\end{aligned}
\end{equation}
On the strip $E^{-\omega}\le \Theta\le 2$, for $E$ large, $v$ is large since $\wv  = \Theta E^a \geq E^{a- \omega}$. From all this, we deduce,
\begin{equation*}
E^a \cLs \Theta \leq\left(\frac {d}{\wv  } + \vert \nabla V \vert - \wv ^{\beta-3}|v|^2\right) +a\wv ^{\beta+1} E^{-1}
 +a(a+1)\wv ^3 E^{-2}
\end{equation*}
Observe that 
\begin{align*}
&\wv ^{\beta+1} E^{-1} \leq \Theta^{\beta+1} E^{a(\beta+1)-1} = \Theta^{2} E^{2a-1} \Theta^{\beta-1}E^{-a(1-\beta)} \leq 4 R^{2a-1} \Theta^{\beta-1}E^{-a(1-\beta)},\\
&\wv ^3 E^{-2} = \Theta^{4-\beta}E^{a(4-\beta)-2} \Theta^{\beta-1}E^{-a(1-\beta)} \leq 2^{4-\beta}R^{a(4-\beta)-2} \Theta^{\beta-1}E^{-a(1-\beta)},\\
&\wv ^{\beta-3}|v|^2 =\wv ^{\beta-1} \frac{|v|^2}{\wv ^2} = \wv ^{\beta-1} \left(1-\frac{1}{\wv ^2} \right) \geq \frac{\wv ^{\beta-1}}2 = \frac{\Theta^{\beta-1}}2E^{-a(1-\beta)},
\end{align*}
\begin{align*}
\vert \nabla V \vert &=  \left(\vert \nabla V \vert \Theta^{1-\beta}E^{a(1-\beta)} \right) \Theta^{\beta-1}E^{-a(1-\beta)} \leq \left(\vert \nabla V \vert \Theta^{1-\beta}V^{a(1-\beta)} \right) \Theta^{\beta-1}E^{-a(1-\beta)}   \\
&\leq \left( E^{1-\frac{1}{\alpha}+a(1-\beta)+\omega(\beta-1)_+}\right) \Theta^{\beta-1}E^{-a(1-\beta)} 
\end{align*}

the latter being true when $\wv ^2$ is larger that $\frac12$.
As a consequence, 
\begin{equation}
E^a \cLs \Theta \leq - \frac{\Theta^{\beta-1}}4E^{-a(1-\beta)},
\end{equation}
if $E \geq R$ and $R$ is large enough and $a$ and $\omega$ are well-chosen.
   
\end{proof}

\subsection{Estimates on $E^\frac{\beta}{2} - F$}

In view of the latter computations, 
\begin{lemma}\label{lem:Etheta-F}
When $E$ is large enough, on the strip $E^{-\omega}\le \Theta\le 2$, one has, 
\begin{equation*}
 E^\frac{\beta}{2} - F \asymp \Theta^2E^{\frac{\beta}{2}-1+2a}, \quad |\nabla_v(E^\frac{\beta}{2} - F)|
 \le \wv  E^{\frac{\beta}{2}-1} + \wv  ^{2-\beta}V^{\frac{\beta}{2}-\frac{1}{\alpha}}.
\end{equation*}
\end{lemma}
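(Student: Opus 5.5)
The plan is to split
\begin{equation*}
E^{\frac\beta2}-F=\Bigl(E^{\frac\beta2}-V^{\frac\beta2}\Bigr)-\nabla\bigl(V^{\frac\beta2}\bigr)\cdot\chi ,
\end{equation*}
show that the first bracket carries the claimed size $\Theta^2E^{\frac\beta2-1+2a}$, and show that the corrector term is of strictly lower order on the strip. Throughout, I use \Cref{lem:infoTheta}. On the strip $E^{-\omega}\le\Theta\le2$ with $E$ large we have $E\asymp V$. We also have $\wv=\Theta E^a\ge E^{a-\omega}\to\infty$, so $|v|^2\asymp\wv^2=\Theta^2E^{2a}$. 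Finally $\wv^2\le 4E^{2a}\ll E$ because $a<\frac12$.

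\textbf{Main term.} Write $E=V+\frac{|v|^2}{2}$ with $\frac{|v|^2}{2}\le 2E^{2a}=o(V)$. The mean value theorem applied to $s\mapsto s^{\frac\beta2}$ on $[V,E]$ gives
\begin{equation*}
E^{\frac\beta2}-V^{\frac\beta2}=\tfrac\beta2\,\xi^{\frac\beta2-1}\tfrac{|v|^2}{2}\quad\text{for some }\xi\in[V,E].
\end{equation*}
Since $\xi\asymp E$, this is $\asymp E^{\frac\beta2-1}\wv^2=\Theta^2E^{\frac\beta2-1+2a}$.

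\textbf{Corrector term.} By the bound \eqref{eq:cell-growth-short} from \Cref{thm:cell} with $\beta<2<3$, and $|\nabla V|\le\wx^{\alpha-1}\lesssim V^{1-\frac1\alpha}$, we get
\begin{equation*}
\bigl|\nabla(V^{\frac\beta2})\cdot\chi\bigr|\lesssim V^{\frac\beta2-1}\,V^{1-\frac1\alpha}\,\wv^{3-\beta}.
\end{equation*}
Its ratio to the main term is therefore $\lesssim V^{1-\frac1\alpha}\wv^{1-\beta}$.
\begin{itemize}
\item If $\beta\ge1$, this ratio is at most $V^{1-\frac1\alpha}\to0$.
\item If $\beta<1$, use $\wv\le2E^a$ to bound the ratio by $E^{1-\frac1\alpha+a(1-\beta)}$. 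The exponent is negative thanks to the constraint $a\le\frac18\frac{\frac1\alpha-1}{(1-\beta)_+}$ in \eqref{eq:a-choice}.
\end{itemize}
In both cases the corrector is $o$ of the main term once $E$ is large, which yields the two-sided $\asymp$. This step is the one real obstacle: it is exactly where the choice of $a$ enters.

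\textbf{Gradient.} We have $\nabla_v E^{\frac\beta2}=\frac\beta2E^{\frac\beta2-1}v$, whose norm is at most $\wv E^{\frac\beta2-1}$ since $\beta<2$. For the other piece, $\nabla_vF=(D_v\chi)^\top\nabla(V^{\frac\beta2})$. By \eqref{eq:cell-gradient-upper-short}, its norm is
\begin{equation*}
\lesssim \wv^{2-\beta}\,V^{\frac\beta2-1}|\nabla V|\lesssim\wv^{2-\beta}V^{\frac\beta2-\frac1\alpha}.
\end{equation*}
Adding the two contributions gives the stated gradient bound, with constants depending only on $(d,\alpha,\beta)$ absorbed as in the rest of the section.
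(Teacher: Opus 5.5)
Your proof is correct and follows the paper's route. You use the same splitting of $E^{\frac\beta2}-F$ into $E^{\frac\beta2}-V^{\frac\beta2}$ (expanded here via the mean value theorem) and the corrector term bounded through \eqref{eq:cell-growth-short}, followed by the same two-term gradient computation using \eqref{eq:cell-gradient-upper-short}; your case split $\beta\ge1$ versus $\beta<1$ for the ratio $V^{1-\frac1\alpha}\wv^{1-\beta}$ is slightly cleaner than the paper's exponent bookkeeping with $\Theta^{1-\beta}$, and it pinpoints where the constraint on $a$ in \eqref{eq:a-choice} is used.
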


\begin{proof}[{\bf Proof of \Cref{lem:Etheta-F}}]
Recall that $$E^\frac{\beta}{2} - F = \left(\frac{\vert v \vert^2}{2} + V\right)^\frac{\beta}{2} -V^\frac{\beta}{2} -\nabla(V^\frac{\beta}{2})\cdot\chi =V^\frac{\beta}{2} \left(\left(\frac{\vert v \vert^2}{2V} + 1\right)^\frac{\beta}{2}-1\right) -\nabla(V^\frac{\beta}{2})\cdot\chi .$$
We shall estimate both parts separately. On the strip $E^{-\omega}\le \Theta\le 2$, $v$ is large. Recalling $\wv  = \Theta E^a$ and $V \asymp E$, we obtain $\frac{\vert v \vert^2}{2V} \asymp \Theta^2 E^{2a-1}$. Since the latter is small, 
$$V^\frac{\beta}{2} \left(\left(\frac{\vert v \vert^2}{2V} + 1\right)^\frac{\beta}{2}-1\right) \asymp \Theta^2 E^{\frac{\beta}{2} +2a-1}.$$
The term $
\nabla(V^\frac{\beta}{2})\cdot\chi$ is smaller, since, recalling \Cref{thm:cell},
\begin{align*}
    \left\vert \nabla(V^\frac{\beta}{2})\cdot\chi\right\vert &\leq V^{\frac{\beta}{2}-1}\left\vert \nabla V \right\vert \left\vert \chi\right\vert \leq V^{\frac{\beta}{2} - 1 +1 - \frac{1}{\alpha}} \wv ^{3-\beta} \\
    &\lesssim E^{\frac{\beta}{2} - \frac{1}{\alpha}}\wv ^{3-\beta}= \left(E^{1- \frac{1}{\alpha} +a(1- \beta)}\Theta^{1-\beta} \right)\Theta^2 E^{\frac{\beta}{2} +2a-1} \leq \left(E^{1- \frac{1}{\alpha} + a(1-\beta) - \omega(1-\beta)_+} \right)\Theta^2 E^{\frac{\beta}{2} +2a-1}.
\end{align*}
When $a$ and $\omega$ are suitably chosen, the above bracket is small when $E$ is large. Furthermore,
\begin{align*}
\nabla_v(E^\frac{\beta}{2} - F) &= \nabla_v \left( E^\frac{\beta}{2} -V^\frac{\beta}{2} -\nabla(V^\frac{\beta}{2})\cdot\chi \right)
= \frac{\beta}{2} E^{\frac{\beta}{2}-1} v - \frac{\beta}{2} V^{\frac{\beta}{2} - 1} D_v \chi  \nabla V
\end{align*}
and thus 
\begin{align*}
\left\vert \nabla_v(E^\frac{\beta}{2} - F)  \right\vert \lesssim \wv  E^{\frac{\beta}{2}-1} + \wv  ^{2-\beta}V^{\frac{\beta}{2}-\frac{1}{\alpha}}.
\end{align*}
\end{proof}

\begin{lemma}\label{lem:estPhi}
Taking $\kappa$ small, when $E$ and $v$ are large,
\begin{equation}\label{eq:B-local-negative}
 \cLs E^\frac{\beta}{2}+4\kappa|\nabla_vE^\frac{\beta}{2}|^2\le - \frac{\beta}{4}E^{\frac{\beta}{2}-1}\wv  ^\beta.
\end{equation}
When $E$ is large, $\Theta \leq 2$ provided $\kappa$ is small enough,
\begin{equation}\label{eq:A-good}
 \cLs F + 4 \kappa \left\vert \nabla_v F \right\vert^2 \le -\frac14c_{\chi} V^{\frac{\beta}{2}-1}\vert\nabla V\vert^2 \wv ^{2-\beta}.
\end{equation}

\end{lemma}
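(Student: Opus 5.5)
We treat the two estimates separately. For \eqref{eq:B-local-negative} we repeat the computation behind \Cref{lem:W} with exponent $\theta=\beta/2$.

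\textbf{The energy weight.} Since $\sfT E=0$, we have $\cLs E^{\frac\beta2}=\sfL^\star E^{\frac\beta2}$. The chain rule gives
\begin{equation*}
\cLs E^{\frac\beta2}=\tfrac\beta2E^{\frac\beta2-1}\bigl(d-\wv^{\beta-2}|v|^2\bigr)+\tfrac\beta2\bigl(\tfrac\beta2-1\bigr)E^{\frac\beta2-2}|v|^2 .
\end{equation*}
The second term is nonpositive because $\beta<2$. For $|v|$ large, $\wv^{\beta-2}|v|^2-d\ge\frac34\wv^\beta$, so $\cLs E^{\frac\beta2}\le-\frac{3\beta}{8}E^{\frac\beta2-1}\wv^\beta$. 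The gradient term is
\begin{equation*}
4\kappa|\nabla_vE^{\frac\beta2}|^2=\kappa\beta^2E^{\beta-2}|v|^2 .
\end{equation*}
Its ratio to $E^{\frac\beta2-1}\wv^\beta$ is at most $\kappa\beta^2E^{\frac\beta2-1}\wv^{2-\beta}$. Using $E\ge|v|^2/2$ and $\frac\beta2-1<0$, this is $\lesssim\kappa\wv^{\beta-2}\wv^{2-\beta}=\kappa$. Choosing $\kappa$ small then leaves at least $-\frac\beta4E^{\frac\beta2-1}\wv^\beta$.

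\textbf{The corrected spatial phase.} For \eqref{eq:A-good} we follow \Cref{lem:macro} with $U=V^{\frac\beta2}$. The cell equation \eqref{eq:Poisson} cancels $v\cdot\nabla U$ against $\nabla U\cdot\sfL^\star\chi$. This leaves
\begin{equation*}
\cLs F=v^\top D_x^2(V^{\frac\beta2})\chi-\tfrac\beta2V^{\frac\beta2-1}\nabla V^\top D_v\chi\,\nabla V .
\end{equation*}
By \eqref{eq:cell-coercivity}, the last term is $\le-\frac\beta2c_\chi V^{\frac\beta2-1}|\nabla V|^2\wv^{2-\beta}$; we absorb $\beta/2$ into constants, as the statement does. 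Since $V\asymp\wx^\alpha$, we have $|D_x^2V^{\frac\beta2}|\lesssim V^{\frac\beta2-1}\wx^{\alpha-2}+V^{\frac\beta2-2}|\nabla V|^2\lesssim V^{\frac\beta2-1}|\nabla V|^2\wx^{-\alpha}$. Together with \eqref{eq:cell-growth-short}, this bounds the Hessian term by
\begin{equation*}
V^{\frac\beta2-1}|\nabla V|^2\wv^{2-\beta}\cdot\wx^{-\alpha}\wv^{2}A_\beta(\wv)\wv^{-(3-\beta)} .
\end{equation*}
On $\Theta\le2$ we have $\wv\lesssim E^a\asymp V^a$ by \Cref{lem:infoTheta}. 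Hence the extra factor is $\lesssim V^{-1}\wv^{2}\lesssim V^{2a-1}$, and it is small for $E$ large since $a<\frac12$. In the case $\beta\ge3$, $A_\beta$ is bounded and the factor $\wv^{\beta-1}$ appears instead, but $\beta<2$ here, so this case does not arise. So the Hessian term is at most a quarter of the coercive term.

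\textbf{The gradient term.} We have $\nabla_vF=\frac\beta2V^{\frac\beta2-1}(D_v\chi)\nabla V$, and $|D_v\chi|\lesssim\wv^{2-\beta}$ by \eqref{eq:cell-gradient-upper-short}. Therefore
\begin{equation*}
4\kappa|\nabla_vF|^2\lesssim\kappa V^{\beta-2}|\nabla V|^2\wv^{4-2\beta} ,
\end{equation*}
and its ratio to the coercive term is $\kappa V^{\frac\beta2-1}\wv^{2-\beta}\lesssim\kappa V^{\frac\beta2-1+a(2-\beta)}$. The exponent equals $(2-\beta)(a-\frac12)<0$, so this ratio is bounded by $\kappa$ times a constant, and small $\kappa$ absorbs it.

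\textbf{Main obstacle.} The delicate point is this last balance. It works only because the restriction $\Theta\le2$ caps the velocity at $E^a$ with $a<\frac12$. Both the Hessian remainder and the gradient term rely on that cap, so we must check carefully that the powers of $V$ are negative, uniformly for $E\ge R_\Theta$.
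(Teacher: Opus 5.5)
Your proposal is correct and follows essentially the same route as the paper. For $E^{\frac\beta2}$ you use the same chain-rule computation and absorb the gradient term for small $\kappa$. For $F$ you use the same cell-equation cancellation. The Hessian remainders are controlled by the factor $\wv^{2}V^{-1}\lesssim E^{2a-1}$ on $\{\Theta\le 2\}$, and the $|\nabla_v F|^2$ term by $V^{\frac{\beta}{2}-1}\wv^{2-\beta}\lesssim V^{(2-\beta)(a-\frac12)}$.

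There are only two cosmetic differences. You absorb that last term with small $\kappa$ rather than large $E$; either works. You also explicitly flag the factor $\frac{\beta}{2}$ in front of $c_\chi$, which the paper silently drops.
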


\begin{proof}[{\bf Proof of \Cref{lem:estPhi}}]

Now, 
\begin{equation*}
 \cLs E^\frac{\beta}{2}=\frac{\beta}{2} E^{\frac{\beta}{2}-1}\big(d-\wv  ^{\beta-2}|v|^2\big)
 +\frac{\beta}{2}\left(\frac{\beta}{2}-1\right)E^{\frac{\beta}{2}-2}|v|^2.
\end{equation*}
Moreover, since $|\nabla_vE^\frac{\beta}{2}|^2=\frac{\beta^2}{4}E^{\beta-2}|v|^2$,
\begin{align*}
 \cLs E^\frac{\beta}{2}+4\kappa|\nabla_vE^\frac{\beta}{2}|^2 & =\frac{\beta}{2} E^{\frac{\beta}{2}-1}\big(d-\wv  ^{\beta-2}|v|^2\big)
+\frac{\beta}{2}\left(\frac{\beta}{2}-1\right)E^{\frac{\beta}{2}-2}|v|^2 +  \kappa \beta^2E^{\beta-2}|v|^2\notag\\
 &\le \frac{\beta}{2} E^{\frac{\beta}{2} -1} \left( d-\wv  ^{\beta-2}|v|^2 + 2 \beta \kappa E^{\frac{\beta}{2}-1}|v|^2\right).
\end{align*}
Since $\frac{\beta}{2} \leq 1$, $E^{\frac{\beta}{2}-1} < 2^{1-\frac{\beta}{2}}\vert v \vert^{2(\frac{\beta}{2}-1)}$, 
\begin{align*}
 \cLs E^\frac{\beta}{2}+4\kappa|\nabla_vE^\frac{\beta}{2}|^2 &\le \frac{\beta}{2} E^{\frac{\beta}{2} -1} \left( d + \left(2^{3-\frac{\beta}{2}} \kappa \frac{\beta}{2} \vert v \vert^{2(\frac{\beta}{2}-1)}-\wv  ^{\beta-2} \right)|v|^2\right).
\end{align*}
Taking $\kappa$ small, when $E$ and $v$ are large,
\begin{equation*}
 \cLs E^\frac{\beta}{2}+4\kappa|\nabla_vE^\frac{\beta}{2}|^2\le - \frac{\frac{\beta}{2}}{2}E^{\frac{\beta}{2}-1}\wv  ^\beta.
\end{equation*}
Observe that
\begin{align*}
 \cLs F &= \sfT F + \sfL^\star F = \sfT (V^\frac{\beta}{2}+\nabla(V^\frac{\beta}{2})\cdot\chi) + \sfL^\star (V^\frac{\beta}{2}+\nabla(V^\frac{\beta}{2})\cdot\chi) \\
 &= v \cdot \nabla(V^\frac{\beta}{2})+\sfT (\nabla(V^\frac{\beta}{2})\cdot\chi) + \nabla(V^\frac{\beta}{2})\cdot\sfL^\star\chi \\
 &= \frac{\beta}{2} V^{\frac{\beta}{2}-1} v^\top D_x^2V \chi + \frac{\beta}{2}\left(\frac{\beta}{2}-1\right)V^{\frac{\beta}{2}-2} (v \cdot \nabla V)(\chi \cdot \nabla V) - \frac{\beta}{2} V^{\frac{\beta}{2}-1}\nabla V^\top(D_v\chi)\nabla V .
\end{align*}
All latter terms may be estimated, the leading order term being the coercive one,
\begin{align*}
    V^{\frac{\beta}{2}-1}\nabla V^\top(D_v\chi)\nabla V \geq c_{\chi} V^{\frac{\beta}{2}-1}\vert\nabla V\vert^2 \wv ^{2-\beta}
\end{align*}
The other terms are lower order since, using $V \asymp E$ and $\wv  \leq 2 E^a$ on $\Theta\le 2$,
\begin{align*}
    &\left\vert V^{\frac{\beta}{2}-1} v^\top D_x^2V \chi \right\vert \lesssim \left(V^{1-\frac{2}{\alpha}} \wv ^2 \vert\nabla V\vert^{-2} \right)V^{\frac{\beta}{2}-1}\vert\nabla V\vert^2\wv ^{2-\beta} \lesssim \left(\wv ^2 V^{-1}\right)V^{\frac{\beta}{2}-1}\vert\nabla V\vert^2\wv ^{2-\beta}\\
    &\left\vert V^{\frac{\beta}{2}-2} (v \cdot \nabla V)(\chi \cdot \nabla V) \right\vert \lesssim \wv ^{4-\beta} V^{\frac{\beta}{2}-2}\vert\nabla V\vert^2 = \left( V^{-1} \wv ^2 \right)V^{\frac{\beta}{2}-1}\vert\nabla V\vert^2\wv ^{2-\beta}
\end{align*}
and $\wv ^2 V^{-1}$ is smaller than $E^{2a-1}$ which tends to zero. As a consequence, when $E$ is large, on $\Theta \leq 2$, 
\begin{equation*}
 \cLs F\le -\frac12c_{\chi} V^{\frac{\beta}{2}-1}\vert\nabla V\vert^2 \wv ^{2-\beta}.
\end{equation*}
Also, 
\begin{align*}
\left\vert \nabla_v F \right\vert^2 = \frac{\beta^2}{4} V^{\beta-2}\left\vert D_v \chi  \nabla V \right\vert^2 \lesssim\frac{\beta^2}{4} V^{\frac{\beta}{2} - 1} \wv ^{2-\beta}\left( V^{\frac{\beta}{2} - 1}\wv ^{2-\beta} \left\vert \nabla V \right\vert^2\right) 
\end{align*}
Since $V^{\frac{\beta}{2} - 1} \wv ^{2-\beta}  \asymp E^{\frac{\beta}{2} - 1} \Theta^{2-\beta} E^{a(2-\beta)} \leq 2^{2-\beta} E^{(a-\frac12)(2-\beta)}$, the term $\left\vert \nabla_v F \right\vert^2$ can be absorbed by $\cLs F$ and thus can be absorbed when $E$ (thus $V$) is large.

Consequently, 
\begin{equation*}
 \cLs F + 4 \kappa \left\vert \nabla_v F \right\vert^2 \le -\frac14c_{\chi} V^{\frac{\beta}{2}-1}\vert\nabla V\vert^2 \wv ^{2-\beta}
\end{equation*}
when $E$ is large, provided $\kappa$ is small enough.
    
\end{proof}

\subsection{Estimates on $\Phi$}

\begin{proposition}[Lyapunov estimate on $\Phi$]\label{prop:LyapPhi}
There exists $\kappa_0>0$ such that, for each fixed
\begin{equation*}
0<\kappa\le\kappa_0,
\end{equation*}
there are constants $c>0$, $C>0$, and $R>0$ for which
\begin{equation}\label{eq:phase-final}
\cLs\Phi+2\kappa|\nabla_v\Phi|^2
 \le C\one_{\{E\le R\}}-c\big((1-\zeta(\Theta))V^{\frac{\beta}{2}-1}|\nabla V|^2\wv  ^{2-\beta} +\zeta(\Theta) E^{\frac{\beta}{2}-1}\wv  ^\beta \big).
\end{equation} 
\end{proposition}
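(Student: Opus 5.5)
We expand $\cLs\Phi$ by the product rule for the second-order operator $\sfL^\star$ (the transport part $\sfT$ is a first-order derivation). Writing $\zeta=\zeta(\Theta)$ and $G:=E^{\frac\beta2}-F$, we have $\Phi=\Phi_0+F+\zeta G$. Hence
\begin{equation*}
\cLs\Phi=(1-\zeta)\cLs F+\zeta\,\cLs E^{\frac\beta2}+G\,\zeta'(\Theta)\cLs\Theta+G\,\zeta''(\Theta)|\nabla_v\Theta|^2+2\zeta'(\Theta)\nabla_v\Theta\cdot\nabla_v G .
\end{equation*}
For the gradient term we use
\begin{equation*}
|\nabla_v\Phi|^2\le 2\bigl((1-\zeta)|\nabla_vF|+\zeta|\nabla_vE^{\frac\beta2}|\bigr)^2+2G^2\zeta'^2|\nabla_v\Theta|^2 .
\end{equation*}
The convexity bound $((1-\zeta)a+\zeta b)^2\le(1-\zeta)a^2+\zeta b^2$ makes $4\kappa(1-\zeta)|\nabla_vF|^2+4\kappa\zeta|\nabla_vE^{\frac\beta2}|^2$ appear. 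The bulk terms are then handled by Lemma~\ref{lem:estPhi}: \eqref{eq:A-good} on $\{\Theta\le2\}$, where $1-\zeta$ is supported, and \eqref{eq:B-local-negative} where $\zeta>0$ and $v$ is large. This produces exactly the two good terms on the right of \eqref{eq:phase-final}. One caveat is that $\zeta>0$ everywhere, so where $\Theta$ is tiny we are not in the regime of \eqref{eq:B-local-negative}. There $\zeta\cLs E^{\frac\beta2}$ is bounded by $\zeta E^{\frac\beta2-1}$ times a constant and is absorbed by $(1-\zeta)$ times the $F$-dissipation, because $\zeta(\Theta)=e^{-1/\Theta}$ is super-polynomially small in $E$ when $\Theta\le E^{-\omega}$. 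The same remark deals with $\zeta E^{\frac\beta2-1}\wv^\beta$ on the right-hand side there.

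We then split into three $\Theta$-zones, all for $E\ge R$.

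On $\{\Theta\ge2\}$ we have $\zeta\equiv1$, so $\Phi=\Phi_0+E^{\frac\beta2}$ and $v$ is large. The estimate is exactly \eqref{eq:B-local-negative}.

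On $\{E^{-\omega}\le\Theta\le2\}$ the cross terms live. By Lemma~\ref{lem:infoTheta}(3) and $\zeta'\ge0$, $G\ge0$ (Lemma~\ref{lem:Etheta-F}), the term $G\zeta'\cLs\Theta$ is nonpositive and can be dropped (or even used). The terms $G\zeta''|\nabla_v\Theta|^2$, $2\zeta'\nabla_v\Theta\cdot\nabla_vG$ and $\kappa G^2\zeta'^2|\nabla_v\Theta|^2$ are bounded using $(\zeta'')_+\le\Theta^{-4}\zeta$, $\zeta'=\Theta^{-2}\zeta$ on $(0,1]$ (and bounded on $[1,2]$), together with $|\nabla_v\Theta|\lesssim E^{-a}$ and Lemma~\ref{lem:Etheta-F}. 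Typically,
\begin{equation*}
G\,\zeta''|\nabla_v\Theta|^2\lesssim \zeta\,\Theta^{-2}E^{\frac\beta2-1},
\end{equation*}
and this must be compared with $\zeta E^{\frac\beta2-1}\wv^\beta=\zeta E^{\frac\beta2-1}\Theta^\beta E^{a\beta}$. Since $\Theta\ge E^{-\omega}$, the ratio is at most $E^{(2+\beta)\omega-a\beta}\to0$, which is the constraint $\omega<\frac{a\beta}{1+\beta}$ built into \eqref{eq:a-choice}. The gradient cross term is handled the same way: it is bounded by $\zeta\Theta^{-2}E^{-a}\bigl(\wv E^{\frac\beta2-1}+\wv^{2-\beta}V^{\frac\beta2-\frac1\alpha}\bigr)$ and compared against the $\zeta$-dissipation, and the second piece against $(1-\zeta)V^{\frac\beta2-1}|\nabla V|^2\wv^{2-\beta}$ when $\Theta\le1$, using Young. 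The $\kappa G^2\zeta'^2|\nabla_v\Theta|^2$ term also uses that $\zeta'^2/\zeta=\Theta^{-4}\zeta$ is harmless, with $\kappa$ small. On $[1,2]$, $\zeta$ and $1-\zeta$ are both comparable to constants and the same powers of $E$ close the argument.

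On $\{\Theta\le E^{-\omega}\}$ everything carrying $\zeta,\zeta',\zeta''$ is bounded by a polynomial in $E$ times $e^{-E^{\omega}/2}$. It is therefore absorbed by the $F$-dissipation, which is only polynomially small, namely $\asymp V^{\frac\beta2-1-\frac{2(1-\alpha)}\alpha}$ there since $\wv\ge1$. Here \eqref{eq:A-good} applies because $\Theta\le2$ and $E\asymp V$ by Lemma~\ref{lem:infoTheta}(1).

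Finally, the region $\{E\le R\}$ is compact and $\Phi$ is smooth, so it contributes $C\one_{\{E\le R\}}$. The main obstacle is the strip $E^{-\omega}\le\Theta\le2$: checking that each derivative-of-$\zeta$ cross term is dominated by the available dissipation requires tracking the exponents of $\Theta$ and $E$ against the constraints on $a,\omega$ in \eqref{eq:a-choice}. In particular, the term $\zeta'\nabla_v\Theta\cdot\nabla_v(\nabla V^{\frac\beta2}\cdot\chi)$ when $\beta<1$ (where $\wv^{2-\beta}$ grows) is where I would check first whether the $\frac{1/\alpha-1}{(1-\beta)_+}$ constraint on $a$ suffices.
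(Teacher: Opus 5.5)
Your proposal is correct and follows essentially the paper's proof: the same product-rule expansion of $\cLs\Phi$ with $G=E^{\frac\beta2}-F$, the same convexity/Young bound on $|\nabla_v\Phi|^2$, \Cref{lem:estPhi} for the bulk terms, the super-polynomial smallness of $\zeta$ on $\{\Theta\le E^{-\omega}\}$, and the strip split at $\Theta=1$. The one variation is that you drop $G\zeta'(\Theta)\cLs\Theta$ and absorb the $\zeta''$ and cross terms into $\zeta E^{\frac\beta2-1}\wv^\beta$. The paper instead uses that transition term, which is larger by a factor $\Theta^{-1}$. Your route therefore needs $(2+\beta)\omega<a\beta$, not the $\omega<\frac{a\beta}{1+\beta}$ you quote; this is still implied by \eqref{eq:a-choice}. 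Your $\beta<1$ worry is also resolved: that cross term is at most $E^{2\beta\omega+a(1-2\beta)+1-\frac1\alpha}$ times the $\zeta$-dissipation, and this exponent is negative under \eqref{eq:a-choice}.
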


\begin{proof}[{\bf Proof of \Cref{prop:LyapPhi}}]

Start by 
\begin{align*}\label{eq:grad-Phi-exact}
\nabla_v \Phi =(1-\zeta(\Theta))\nabla_vF + \zeta(\Theta)\nabla_v E^\frac{\beta}{2} +\zeta'(\Theta) (E^\frac{\beta}{2} - F) \nabla_v \Theta.
\end{align*}
Since $0\le \zeta\le1$, convexity gives
\begin{equation*}
 |(1-\zeta(\Theta))\nabla_v F+\zeta(\Theta)\nabla_vE^\frac{\beta}{2}|^2\le (1-\zeta(\Theta))|\nabla_v F|^2+\zeta(\Theta)|\nabla_vE^\frac{\beta}{2}|^2.
\end{equation*}
Using Young's inequality, we obtain:
\begin{equation}\label{eq:grad-square-split}
|\nabla_v\Phi|^2
 \le 2(1-\zeta)|\nabla_vF|^2
 +2\zeta |\nabla_vE^\frac{\beta}{2}|^2
 +2 |\zeta'(\Theta)(E^\frac{\beta}{2} - F)\nabla_v \Theta|^2.
\end{equation}

We first compute
\begin{align}\label{eq:LPhi-exact}
\cLs\Phi &= \cLs\left( (1-\zeta(\Theta))F \right) + \cLs \left( \zeta(\Theta)E^\frac{\beta}{2} \right)\notag\\
  &=(1-\zeta(\Theta))\cLs F - 2 \zeta'(\Theta) \nabla_v \Theta \cdot \nabla_v F -\cLs (\zeta(\Theta) ) F \notag\\
  &+ \zeta(\Theta) \cLs ( E^\frac{\beta}{2} ) + 2 \zeta'(\Theta) \nabla_v \Theta \cdot \nabla_v E^\frac{\beta}{2}  + \cLs \left( \zeta(\Theta)\right)E^\frac{\beta}{2}\notag\\
  &=(1-\zeta(\Theta))\cLs F + \zeta(\Theta) \cLs ( E^\frac{\beta}{2} )
   + 2 \zeta'(\Theta) \nabla_v \Theta \cdot \nabla_v \left( E^\frac{\beta}{2} - F \right) + \cLs \left( \zeta(\Theta)\right) \left( E^\frac{\beta}{2} - F\right)\notag\\
  &=(1-\zeta(\Theta))\cLs F + \zeta(\Theta) \cLs ( E^\frac{\beta}{2} )\notag\\
   &+ \zeta'(\Theta) \left(2\nabla_v \Theta \cdot \nabla_v ( E^\frac{\beta}{2} - F ) + (\cLs\Theta) ( E^\frac{\beta}{2} - F)\right) +  \zeta''(\Theta) \vert \nabla_v \Theta \vert^2 ( E^\frac{\beta}{2} - F) 
\end{align}
where we have used
$$\cLs \left( \zeta(\Theta)\right) = \zeta'(\Theta) \cLs\Theta + \zeta''(\Theta) \vert \nabla_v \Theta \vert^2.$$
Combining previous expressions, we get that
\begin{align*}\label{eq:LPhi-exact}
\cLs\Phi + 2 \kappa \left\vert \nabla_v \Phi\right\vert^2 &\leq (1-\zeta(\Theta)) \left(\cLs F + 4 \kappa \left\vert \nabla_v F\right\vert^2\right)+ \zeta(\Theta)\left( \cLs ( E^\frac{\beta}{2} ) + 4 \kappa \left\vert \nabla_v E^\frac{\beta}{2}\right\vert^2\right)\notag\\
   &+ 2\zeta'(\Theta) \nabla_v \Theta \cdot \nabla_v ( E^\frac{\beta}{2} - F ) + \zeta'(\Theta) (\cLs\Theta) ( E^\frac{\beta}{2} - F) \\
   &+  \zeta''(\Theta) \vert \nabla_v \Theta \vert^2 ( E^\frac{\beta}{2} - F) + 4 \kappa |\zeta'(\Theta)(E^\frac{\beta}{2} - F)\nabla_v \Theta|^2
\end{align*}
We split the phase space into several zones to estimate the dissipation, assuming that $E\geq R$, for $R$ sufficiently large to be set along the proof.

\medskip

\noindent{\bf \# Region $\Theta\le E^{-\omega}$.}
Here $\Theta\le1$, so $\zeta(\Theta)\le e^{- E^\omega}$. In this region, all the terms having a $\zeta$, $\zeta'$, or $\zeta''$ are very small, so they are negligible with respect to $V^{\frac{\beta}{2}-1}|\nabla V|^2\wv  ^{2-\beta}$. 
  
Since $1-\zeta(\Theta)\ge \frac12$ in this region, using \Cref{lem:estPhi}, we have
\begin{equation}\label{eq:region-I}
 \cLs\Phi+2\kappa|\nabla_v\Phi|^2\le -\frac18 V^{\frac{\beta}{2}-1}|\nabla V|^2\wv  ^{2-\beta} ,
\end{equation}
after discarding all the small remainder terms detailed above. 
\medskip

\noindent{\bf \# Region $E^{-\omega}\le \Theta\le1$.}
Here $\wv  \ge E^{a-\omega}$, so \eqref{eq:B-local-negative} applies.  The term $\zeta'(\Theta)(\cLs \Theta)(E^\frac{\beta}{2} - F)$ is negative and, using \eqref{eq:zeta-derivatives}, \Cref{lem:infoTheta} and \Cref{lem:Etheta-F},
\begin{equation*}\label{eq:negative-transition}
 \zeta'(\Theta)(\cLs \Theta)(E^\frac{\beta}{2} - F)
 \le -\frac14 \zeta(\Theta)\Theta^{\beta-1}E^{\frac{\beta}{2}-1+a\beta}.
\end{equation*}
The positive part of the $\zeta''$-term is
\begin{equation*}\label{eq:zeta-second-est}
(\zeta''(\Theta))_+|\nabla_v\Theta|^2(E^\frac{\beta}{2} - F)
 \lesssim   \zeta(\Theta)\Theta^{-2}E^{\frac{\beta}{2}-1} =\left( \Theta^{-1-\beta}E^{-a\beta}\right) \zeta(\Theta)\Theta^{\beta-1}E^{\frac{\beta}{2}-1+a\beta}.
\end{equation*}
The bracket term might be estimated as follows. 
\begin{equation*}
\Theta^{-1-\beta}E^{-a\beta}
 \le E^{\omega(1+\beta)-a\beta},
\end{equation*}
which is small when $E$ is large, since $\omega(1+\beta)-a\beta<0$. Thus, $(\zeta''(\Theta))_+|\nabla_v\Theta|^2(E^\frac{\beta}{2} - F)$ is absorbed by $\zeta'(\Theta)(\cLs \Theta)(E^\frac{\beta}{2} - F)$.

The following cross term satisfies, using \Cref{lem:infoTheta} and \Cref{lem:Etheta-F}, recalling that $E \asymp V$ in this zone,
\begin{align*}\label{eq:cross-est} 2|\zeta'(\Theta)\nabla_v\Theta\cdot\nabla_v(E^\frac{\beta}{2} - F)|
 &\lesssim C\zeta(\Theta)\Theta^{-2}E^{-a}
 \left(E^{\frac{\beta}{2}-1}\wv  +E^{\frac{\beta}{2}- \frac{1}{\alpha}}  \wv ^{2-\beta}\right)\\
 &\lesssim C\zeta(\Theta)
 \left(\Theta^{-\beta}E^{-a\beta} +E^{1- \frac{1}{\alpha}}E^{a(1-2\beta)}\Theta^{1-2\beta}\right)\Theta^{\beta-1}E^{\frac{\beta}{2}-1+a\beta}
 .
\end{align*}
The term $\Theta^{-\beta}E^{-a\beta}$ is small when $E$ is large, since $\omega < a$. For the term $\Theta^{1-2\beta}E^{1-\frac{1}{\alpha}+a(1-2\beta)}$, it is also small when $E$ is large (since $1-\frac{1}{\alpha}+(a-\omega)(1-2\beta)_+<0$) . 

Finally, the last term in \eqref{eq:grad-square-split} satisfies
\begin{equation*}
 4\kappa|\zeta'(\Theta)(E^\frac{\beta}{2} - F)\nabla_v\Theta|^2
 \lesssim \kappa \zeta(\Theta)^2E^{\beta-2+2a} = \left(\kappa\zeta(\Theta)\Theta^{1-\beta}E^{\frac{\beta}{2}-1+a(2-\beta)}\right)\zeta(\Theta)\Theta^{\beta-1}E^{\frac{\beta}{2}-1+a\beta}.
\end{equation*}
The latter bracket term is small when $E$ is large since $\frac{\beta}{2}-1+a(2-\beta)- \omega(1-\beta)_+ < 0$. 

We obtain
\begin{equation}\label{eq:region-II}
\cLs\Phi+2\kappa|\nabla_v\Phi|^2
 \le -c\big((1-\zeta(\Theta))V^{\frac{\beta}{2}-1}|\nabla V|^2\wv  ^{2-\beta} +\zeta(\Theta) E^{\frac{\beta}{2}-1}\wv  ^\beta \big),
\end{equation}
in this zone.
\medskip

\noindent{\bf \# Region $1\le \Theta\le 2$.}

Here $E\asymp V$, $\wv  \asymp E^a$, and $\zeta\ge e^{-1}$.  Hence \eqref{eq:B-local-negative} gives a negative energy contribution.  Since $|\zeta'|+|\zeta''|\le C_\zeta$, all transition terms are powers of $E$.  More precisely,
\begin{equation*}
|\zeta''||\nabla_v\Theta|^2(E^\frac{\beta}{2} - F)\lesssim  E^{\frac{\beta}{2}-1}, \qquad |\zeta'\nabla_v\Theta\cdot E^{\frac{\beta}{2}-1}v|\lesssim E^{\frac{\beta}{2}-1}.
\end{equation*}
\begin{equation*}
    \left\vert 2\zeta'(\Theta) \nabla_v \Theta \cdot \nabla_v ( E^\frac{\beta}{2} - F ) \right\vert \lesssim  E^{1-\frac{1}{\alpha}+a(1-2\beta)}E^{\frac{\beta}{2}-1}\wv  ^\beta
\end{equation*}
The squared term satisfies
\begin{equation*}
4\kappa|\zeta'(E^\frac{\beta}{2} - F)\nabla_v\Theta|^2\le C\kappa E^{\beta-2+2a} \lesssim \left(\kappa E^{\frac{\beta}{2}-1+a(2-\beta)}\right) E^{\frac{\beta}{2}-1}\wv  ^\beta ,
\end{equation*}
Consequently, after increasing $R$,
\begin{equation}\label{eq:region-III}
 \cLs\Phi+2\kappa|\nabla_v\Phi|^2
 \le -c\big((1-\zeta(\Theta))V^{\frac{\beta}{2}-1}|\nabla V|^2\wv  ^{2-\beta} +\zeta(\Theta) E^{\frac{\beta}{2}-1}\wv  ^\beta \big).
\end{equation}

\medskip

\noindent{\bf \# Region $\Theta\ge 2$.}
Here $\zeta(\Theta)=1$ and $\Phi=E^\frac{\beta}{2}$.  Since $\wv  \ge 2E^a$, \eqref{eq:B-local-negative} applies for $E$ large. Therefore
\begin{equation}\label{eq:region-IV}
 \cLs\Phi+2\kappa|\nabla_v\Phi|^2
 \le - \frac{\beta}{4}E^{\frac{\beta}{2}-1}\wv  ^\beta.
\end{equation}
Combining \eqref{eq:region-I}, \eqref{eq:region-II}, \eqref{eq:region-III}, and \eqref{eq:region-IV}, we obtain \eqref{eq:phase-final}.

\end{proof}

\subsection{Proof of the Lyapunov estimate}

We are now ready to prove the main result. 

\begin{proof}[{\bf Proof of \Cref{thm:Lyap-alphalt1-betalt2}}]
For $m=e^{\kappa\Phi}$,
\begin{equation}\label{eq:chain-m}
 \frac{\cLs m}{m}=\kappa\cLs\Phi+\kappa^2|\nabla_v\Phi|^2
 =\kappa\left(\cLs\Phi+2\kappa|\nabla_v\Phi|^2\right)-\kappa^2|\nabla_v\Phi|^2.
\end{equation}
Thus \eqref{eq:phase-final} implies 
\begin{equation}\label{eq:strong-main}
 \cLs m
 \le C\one_{\{E\le R\}}
 -c \kappa \big((1-\zeta(\Theta))V^{\frac{\beta}{2}-1}|\nabla V|^2\wv ^{2-\beta}+\zeta(\Theta)E^{\frac{\beta}{2}-1}\wv ^\beta\big)\,m,
\end{equation}
 
The compactness of the sublevel sets comes as follows.  On $\Theta\le 2$, $E\asymp V$ and $|
\nabla(V^{\frac{\beta}{2}})\cdot\chi| \lesssim V^{\frac{\beta}{2}-\frac{1}{\alpha}}\wv^{3-\beta} \lesssim E^{\frac{\beta}{2}-\frac{1}{\alpha}}E^{a(3-\beta)}=o(V^\frac{\beta}{2})$, so $F\asymp V^\frac{\beta}{2}$.  On $\Theta\ge 2$, $\Phi=E^\frac{\beta}{2}$.  Hence $\Phi\to\infty$, and $m=e^{\kappa\Phi}$ is coercive. 

It remains to pass to a scalar weak Lyapunov form. On $\Theta\ge 2$, $\Phi\asymp E^\frac{\beta}{2}$ and $\wv  \gtrsim E^a$, hence
\begin{equation*}
 E^{\frac{\beta}{2}-1}\wv  ^\beta =E^{\frac{\beta}{2}-1}\wv  ^\beta\gtrsim E^{\frac{\beta}{2}-1+a\beta}\gtrsim E^{-\sigma\frac{\beta}{2}} \gtrsim\Phi^{-\sigma} \asymp \kappa^\sigma \left(\ln (e+m)\right)^{-\sigma}.
\end{equation*}
Moreover, $\zeta(\Theta) =1$, so from \eqref{eq:strong-main},
\begin{equation}
 \cLs m
 \le C\one_{\{E\le R\}}
 -c \kappa \big(E^{\frac{\beta}{2}-1}\wv ^\beta\big)\,m \leq C\one_{\{E\le R\}}
 -c \kappa^{1+\sigma} \,\frac{m}{ \left(\ln (e+m)\right)^{\sigma}}.
\end{equation}
On $\Theta\le 2$, notice that if $\zeta(\Theta)\le \frac12$, the term $(1-\zeta)V^{\frac{\beta}{2}-1}|\nabla V|^2\wv  ^{2-\beta} $ is substantial in \eqref{eq:strong-main}, $\Phi\asymp V^\frac{\beta}{2}$ and
\begin{equation*}
 V^{\frac{\beta}{2}-1}|\nabla V|^2\wv  ^{2-\beta} =V^{\frac{\beta}{2}-1}|\nabla V|^2\wv  ^{2-\beta}
 \gtrsim V^{\frac{\beta}{2}-1-\frac{2(1-\alpha)}{\alpha}} \asymp \Phi^{-\sigma} \asymp \kappa^\sigma\left(\ln (e+m)\right)^{-\sigma}.
\end{equation*}
since $\ln (e+m)\asymp \Phi$. If $\zeta(\Theta)>\frac{1}{2}$, then actually $\Theta\ge c>0$ and hence the same energy estimate as for $\Theta\ge2$ holds.

Gathering everything ends the proof of \Cref{thm:Lyap-alphalt1-betalt2}.
\end{proof}


\section{The case \texorpdfstring{$\alpha<1$}{} and \texorpdfstring{$\beta=2$}{}}\label{sec:Lyapunov5}

In this section, we revisit Cao's case discussed in \cite{Cao2019}, which is $0<\alpha<1$, $\beta=2$, with the light given by the previous construction. In this case the velocity cell corrector is exactly $\chi(v)=v$ as one can check immediately, 
\begin{equation*}
  \sfL^\star=\Delta_v-v\cdot\nabla_v, \qquad   \chi(v)=v,
  \qquad
  \sfL^\star\chi=-v,
\end{equation*}
so the construction is neater. 

Consequently, the corrected spatial phase used above reduces to
\begin{equation}\label{caseb2:eq:def-F}
  F(x,v):=V(x)+\nabla V(x)\cdot v.
\end{equation}

We fix once and for all
\begin{equation}\label{caseb2:eq:a-omega}
  a:=\frac18,
  \qquad
  \omega:=\frac1{32},
\end{equation}
and define
\begin{equation}\label{caseb2:eq:def-Theta}
  \Theta(x,v):=\frac{\wv }{E(x,v)^a}.
\end{equation}
Keeping the same notations as in the previous sections for the mollifier $\zeta$, $\Phi$ and $m$, the result proved in this section is the following endpoint version of the weak Lyapunov estimate.

\begin{theorem}\label{caseb2:thm:beta2}
There exist $\Phi_0\geq1$, $\kappa_0>0$, $R>0$, and constants $c,C>0$, depending only on $d$, $\alpha$, and the fixed profile $\zeta$, such that, for every $0<\kappa\leq\kappa_0$, the weight $m$ is positive, coercive, has compact sublevel sets, and satisfies
\begin{equation}\label{caseb2:eq:main-Lyapunov}
  \cL^\star m
  \leq C\1_{\{E\leq R\}}
  -c\,\kappa^{1+\frac{2(1-\alpha)}{\alpha}}
   \frac{m}{(\ln(e+m))^{\frac{2(1-\alpha)}{\alpha}}}.
\end{equation}
\end{theorem}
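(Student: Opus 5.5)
The plan is to specialise the construction of \Cref{sec:Lyapunov4} to $\beta=2$, where everything becomes explicit. By \Cref{thm:cell}, $\chi(v)=v$ and $D_v\chi=\mathbf I$, so $c_\chi=1$. Also $\wv^{2-\beta}=1$, $E^{\frac\beta2}=E$, and $F=V+\nabla V\cdot v$. The phase is then
\begin{equation*}
\Phi=\Phi_0+(1-\zeta(\Theta))F+\zeta(\Theta)E,
\qquad
E-F=\frac{|v|^2}{2}-\nabla V\cdot v.
\end{equation*}
The exponent $\sigma$ of \eqref{eq:sigma} at $\beta=2$ equals $\frac{2(1-\alpha)}{\alpha}$, which is the target. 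We will follow the four steps of \Cref{thm:Lyap-alphalt1-betalt2} in order, checking each with $a=\frac18$, $\omega=\frac1{32}$. Since $\beta=2$, the restrictions in \eqref{eq:a-choice} involving $(1-\beta)_+$ are void. The remaining ones are $\omega<a<\frac12$, $\omega(1+\beta)<a\beta$ (that is, $3/32<1/4$), and $1-\frac1\alpha+(a-\omega)(1-2\beta)_+<0$, which holds trivially.

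\emph{Step 1: the two building blocks.} These are explicit here. First,
\begin{equation*}
\cL^\star E=d-|v|^2,
\qquad
|\nabla_vE|^2=|v|^2,
\end{equation*}
so $\cL^\star E+4\kappa|\nabla_vE|^2\le -\frac12|v|^2$ once $|v|$ is large and $\kappa\le\frac1{16}$. Second,
\begin{equation*}
\cL^\star F=v^\top D^2V\,v-|\nabla V|^2,
\qquad
|\nabla_vF|^2=|\nabla V|^2.
\end{equation*}
On $\{\Theta\le2\}$ we have $|v|\lesssim E^{1/8}$ and $E\asymp V$ by \Cref{lem:infoTheta}. Hence $|v^\top D^2V v|\lesssim \wx^{\alpha-2}E^{1/4}$, which is $o(\wx^{2\alpha-2})\asymp o(|\nabla V|^2)$ as $E\to\infty$, because $E^{1/4}\asymp \wx^{\alpha/4}$ and $\alpha/4<\alpha$. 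With $\kappa$ small, this gives $\cL^\star F+4\kappa|\nabla_vF|^2\le-\frac14|\nabla V|^2$. We will also record
\begin{equation*}
E-F\asymp\Theta^2E^{2a},
\qquad
|\nabla_v(E-F)|\lesssim \wv.
\end{equation*}
The first holds because $|\nabla V\cdot v|\lesssim E^{1-1/\alpha}\wv=o(\wv^2)$ on the strip where $\wv\ge E^{a-\omega}$, as $1-\frac1\alpha<0$. The third item of \Cref{lem:infoTheta} gives $\cL^\star\Theta\le-\frac14\Theta E^{-a}$ on $\{E^{-\omega}\le\Theta\le2\}$.

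\emph{Step 2: the estimate on $\Phi$, zone by zone.} We use the exact expansion of $\cL^\star\Phi+2\kappa|\nabla_v\Phi|^2$ from the proof of \Cref{prop:LyapPhi} and treat four zones.
\begin{itemize}
\item On $\{\Theta\le E^{-\omega}\}$ all $\zeta$-terms carry a factor $e^{-E^\omega}$. They are negligible against $|\nabla V|^2\asymp E^{-2(1-\alpha)/\alpha}$, since polynomial decay beats $e^{-E^\omega}$.
\item On $\{E^{-\omega}\le\Theta\le1\}$ the good transition term is $\zeta'\cL^\star\Theta(E-F)\le -c\zeta\Theta^{-2}\Theta E^{-a}\Theta^2E^{2a}=-c\zeta\Theta E^{a}$. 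The remainders are the $\zeta''$ term, bounded by $\zeta\Theta^{-2}$; the cross term, bounded by $\zeta\Theta^{-1}$; and the square, bounded by $\kappa\zeta^2E^{4a}\Theta^{0}\cdot E^{-2a}$. All are $o(\zeta\Theta E^a)$ because $\Theta\ge E^{-\omega}$ and $3\omega<a$.
\item On $\{1\le\Theta\le2\}$ the dissipation $\zeta|v|^2\asymp E^{2a}$ dominates the $O(1)$ transition terms.
\item On $\{\Theta\ge2\}$ we have $\Phi=\Phi_0+E$, and the energy estimate applies directly.
\end{itemize}
Altogether,
\begin{equation*}
\cL^\star\Phi+2\kappa|\nabla_v\Phi|^2\le C\one_{\{E\le R\}}-c\big((1-\zeta)|\nabla V|^2+\zeta\wv^2\big).
\end{equation*}

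\emph{Step 3: conclusion.} The chain rule \eqref{eq:chain-m} turns this into a drift bound on $m$. Coercivity follows since $F=V+O(V^{1-1/\alpha}E^{1/8})\asymp V$ on $\{\Theta\le2\}$, and $\Phi_0$ is chosen so that $\Phi\ge1$. For the scalar form we use $\ln(e+m)\asymp\kappa\Phi\asymp\kappa E$. Where $\zeta\le\frac12$ we have $|\nabla V|^2\gtrsim V^{-2(1-\alpha)/\alpha}\gtrsim\kappa^{\sigma}(\ln(e+m))^{-\sigma}$. Elsewhere $\wv^2\gtrsim E^{2a}\ge1$, which is even better.

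The main obstacle is the transition strip $E^{-\omega}\le\Theta\le1$. There the negative term produced by $\cL^\star\Theta$ must beat the cross, $\zeta''$ and squared-gradient terms with the specific exponents $a=\frac18$, $\omega=\frac1{32}$. I would first verify the exponent inequalities above carefully, including the contribution $\nabla V\cdot v$ to $\cL^\star\Theta$. That contribution is of size $E^{-a}|\nabla V|$ against the good size $E^{-a}\wv^{-1}\cdot\wv^2/\wv$, and we need $|\nabla V|\ll \wv\ge E^{3/32}$, which holds.
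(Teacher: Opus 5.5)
You follow the paper's route: you specialise \Cref{sec:Lyapunov4} with $\chi(v)=v$, $a=\frac18$, $\omega=\frac1{32}$, use the same four $\Theta$-zones and the same scalar conversion. Most of your bounds are right: the $E$ and $F$ building blocks, $E-F\asymp\Theta^2E^{2a}$, the $\zeta''$ term $\lesssim\zeta\Theta^{-2}$, the cross term $\lesssim\zeta\Theta^{-1}$, coercivity and the final step. The gap is at the one point where $\beta=2$ differs from $\beta<2$: the squared-gradient remainder $4\kappa|\zeta'(\Theta)(E-F)\nabla_v\Theta|^2$.

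You bound it correctly by $C\kappa\zeta(\Theta)^2E^{2a}$. You then claim it is $o(\zeta\Theta E^{a})$ ``because $\Theta\ge E^{-\omega}$ and $3\omega<a$''. This is false: the ratio is $\kappa\zeta(\Theta)E^{a}/\Theta$, which near $\Theta=1$ is of order $\kappa E^{a}\to\infty$.

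Part of the problem is that you weakened item (3) of \Cref{lem:infoTheta}. At $\beta=2$ it gives $\cL^\star\Theta\le-\frac14\Theta$, not $-\frac14\Theta E^{-a}$; the paper records $-\frac\Theta2$ in \Cref{caseb2:lem:infoTheta}. So the good transition term is really $-c\,\zeta\Theta E^{2a}$. Even so, the square is not lower order in $E$. For $\beta<2$ it was small because of the factor $E^{\frac\beta2-1+a(2-\beta)}$, whose exponent vanishes at $\beta=2$. At fixed $\Theta$, the square, the transition term and the energy dissipation all scale like $E^{2a}$. So enlarging $R$ or re-choosing $a,\omega$ cannot help; only the smallness of $\kappa$ can.

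The missing step is exactly the paper's modification. Since $\sup_{0<s\le1}s^{-2}e^{-1/s}<\infty$, one has $\zeta(s)\le Cs^2$ on $(0,1]$. Hence $C\kappa\zeta^2E^{2a}\le C\kappa\zeta\Theta^2E^{2a}=C\kappa\zeta\wv^2$. This is absorbed by the energy dissipation $\zeta(\Theta)\bigl(\cL^\star E+4\kappa|\nabla_vE|^2\bigr)\le-\frac12\zeta|v|^2$ for $\kappa\le\kappa_0$, with $\kappa_0$ independent of $E$. Alternatively, $\zeta(s)\le e^{-1}s$ combined with the sharp transition term gives a ratio $\lesssim\kappa$.

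The same correction is needed on $\{1\le\Theta\le2\}$. There the square is $\lesssim\kappa E^{2a}$, the same order as $\zeta|v|^2\asymp E^{2a}$, not ``$O(1)$''. It is again absorbed only through $\kappa\le\kappa_0$. Likewise, $\zeta'(\cL^\star\Theta)(E-F)$ has size $E^{2a}$ there. It must be discarded by its sign ($\zeta'\ge0$, $\cL^\star\Theta<0$, $E-F>0$) rather than treated as bounded.

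With these two fixes your argument goes through as in the paper.
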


We shall only stress the very few differences that happen to appear compared to the proofs in the previous section. 

\subsection{Elementary estimates for $\Theta$}

We first record the elementary geometric facts needed in the interpolation region.  In what follows, the constants denoted by $c$ and $C$ may change from one line to the next, but depend only on $d$ and $\alpha$.

\begin{lemma}\label{caseb2:lem:infoTheta}
There is $R_\Theta>0$ such that, whenever $E \geq R_\Theta$,
\begin{enumerate}
    \item if $\Theta\le 2$ then $E\asymp V$. 
\item whenever $\Theta\le 2$ and $a<\frac12$,
\begin{equation*}
|\nabla_v\Theta|\lesssim E^{-a}, \qquad  |\nabla_v^2\Theta|\lesssim \Theta^{-1}E^{-2a}.    
\end{equation*}
\item Whenever $E^{-\omega}\le \Theta \le2$,
\begin{equation*}
 \cLs \Theta\le- \frac{\Theta}{2}.
\end{equation*}
\end{enumerate}   
\end{lemma}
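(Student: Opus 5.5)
Items (1) and (2) are insensitive to the value of $\beta$, so I would simply repeat the argument of \Cref{lem:infoTheta} with $a=\frac18$. On $\{\Theta\le2\}$ we have $\wv^2\le 4E^{2a}=4E^{1/4}$, hence $|v|^2/(2E)\le 2E^{-3/4}$. Taking $R_\Theta$ large then gives $V\le E\le(1-2R_\Theta^{-3/4})^{-1}V$. Next, the exact formula \eqref{eq:grad-z-exact} yields $|\nabla_v\Theta|\le E^{-a}$, because $a\wv^2/E\le 4aE^{2a-1}$ is small. For the Hessian, the factorised form \eqref{eq:hess-z-exact} applies: $\nabla_v^2\wv=O(\wv^{-1})=O(\Theta^{-1}E^{-a})$, and the remaining terms carry the extra small factors $\Theta^2E^{2a-1}$ and $\Theta^4E^{4a-2}$. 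Together these give $|\nabla_v^2\Theta|\lesssim\Theta^{-1}E^{-2a}$.

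For item (3), I start from the exact expression \eqref{eq:Lz-exact} with $\beta=2$:
\begin{equation*}
E^a\cLs\Theta=\frac d{\wv}-(1+\wv^2)\frac{|v|^2}{\wv^3}-\frac{v\cdot\nabla V}{\wv}-a\wv E^{-1}\Bigl(d+(2-\wv^2)\frac{|v|^2}{\wv^2}\Bigr)+a(a+1)\wv E^{-2}|v|^2 .
\end{equation*}
The dominant term is $-(1+\wv^2)|v|^2/\wv^3\le-\tfrac12\wv$, since $|v|^2\ge\tfrac12\wv^2$ once $v$ is large. On the strip $E^{-\omega}\le\Theta\le2$ we have $\wv=\Theta E^a\ge E^{a-\omega}=E^{3/32}\to\infty$, so $|v|^2\ge\frac12\wv^2$ indeed holds for $E\ge R_\Theta$. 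I then bound the remaining terms relative to $\wv$:
\begin{itemize}
\item $d/\wv\le dE^{-3/32}$, which is small, so this term is $o(\wv)$;
\item $|v\cdot\nabla V|/\wv\le|\nabla V|\le1$, which is $o(\wv)$ because $\wv\to\infty$;
\item after dropping the favourable sign in $-a\wv E^{-1}(\cdots)$, the bad part $a\wv E^{-1}\wv^2=a\wv\,\Theta^2E^{2a-1}$ is $O(E^{-3/4})\wv$;
\item $a(a+1)\wv E^{-2}|v|^2\le C\wv E^{2a-2}$.
\end{itemize}
Collecting these, $E^a\cLs\Theta\le-\tfrac14\wv=-\tfrac14\Theta E^a$ for $E\ge R_\Theta$ large. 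This gives $\cLs\Theta\le-\tfrac14\Theta$.

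The only delicate point is the constant $\tfrac12$ in the statement rather than $\tfrac14$. To reach it I would use the sharper lower bound $(1+\wv^2)|v|^2\wv^{-3}=\wv(1-\wv^{-4})\ge(1-\epsilon)\wv$ for $\wv$ large, instead of the crude $\tfrac12\wv$. All the error terms above are $o(\wv)$ uniformly on the strip, so $E^a\cLs\Theta\le-(1-2\epsilon)\Theta E^a\le-\tfrac12\Theta E^a$ for $E$ large. This yields $\cLs\Theta\le-\frac{\Theta}{2}$, which is item (3). Compared with the general $\beta<2$ case, no balance between $a,\omega$ and $\alpha$ is needed here: $|\nabla V|\le1$ is automatically negligible against $\wv\to\infty$, so the fixed choices $a=\frac18$, $\omega=\frac1{32}$ with $\omega<a<\frac12$ suffice.
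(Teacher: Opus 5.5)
Your proof is correct and takes the same route as the paper, which simply defers to the proof of \Cref{lem:infoTheta} at $\beta=2$: the same exact formulas for $\nabla_v\Theta$, $\nabla_v^2\Theta$ and $\cLs\Theta$, with every error term shown to be $o(\wv)$ uniformly on the strip $E^{-\omega}\le\Theta\le2$. Your sharper identity $(1+\wv^2)|v|^2\wv^{-3}=\wv(1-\wv^{-4})$ is what actually yields the stated constant $\tfrac12$; the general proof's crude bound $\wv^{\beta-3}|v|^2\ge\tfrac12\wv^{\beta-1}$ would only give $\cLs\Theta\le-\Theta/4$.
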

Since the proof of this lemma is the same as the proof of \Cref{lem:infoTheta}, we do not reproduce it. The special case $\beta=2$ also gives an exact and particularly simple expression for the difference between the two phases.

\begin{lemma}\label{caseb2:lem:E-F}
There exist $R_1>0$ and constants $c_D,C_D>0$ such that, whenever $E\geq R_1$ and $E^{-\omega}\leq\Theta\leq2$ holds,
\begin{equation}\label{caseb2:eq:E-F-comparable}
  c_D\Theta^2E^{2a}
  \leq E-F
  \leq C_D\Theta^2E^{2a},
\end{equation}
and
\begin{equation}\label{caseb2:eq:grad-E-F}
  |\nabla_v(E-F)|\leq C_D\Theta E^a.
\end{equation}
\end{lemma}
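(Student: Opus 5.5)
With $\beta=2$ and $\chi(v)=v$, the difference $E-F$ has an exact expression:
\begin{equation*}
  E-F=\frac{|v|^2}{2}-\nabla V(x)\cdot v,
  \qquad
  \nabla_v(E-F)=v-\nabla V(x).
\end{equation*}
The plan is to read both estimates off this identity. We use two facts from \Cref{caseb2:lem:infoTheta}: on the strip $E^{-\omega}\le\Theta\le2$ the velocity is large, and $E\asymp V$. Both are quantified through $\wv=\Theta E^a$.

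\textbf{Step 1: the velocity is large on the strip.} Since $\Theta\ge E^{-\omega}$, we have $\wv=\Theta E^a\ge E^{a-\omega}=E^{3/32}$. Hence, for $E\ge R_1$ large, $\wv\ge2$. This gives $|v|^2=\wv^2-1\ge\frac34\wv^2$, and trivially $|v|^2\le\wv^2=\Theta^2E^{2a}$.

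\textbf{Step 2: control of the force term.} We have $|\nabla V(x)|\le\wx^{\alpha-1}\le1$. By part (1) of \Cref{caseb2:lem:infoTheta}, $E\asymp V\asymp\wx^\alpha$, so in fact $|\nabla V|\lesssim E^{-(1-\alpha)/\alpha}\to0$; boundedness is already enough. This gives
\begin{equation*}
  |\nabla V\cdot v|\le|\nabla V|\,\wv\le\wv.
\end{equation*}
Since $\wv\ge E^{3/32}\to\infty$, the right-hand side is at most $\frac18\wv^2$ for $E$ large.

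\textbf{Step 3: conclusion.} Combining Steps 1 and 2 for the difference itself,
\begin{equation*}
  \frac38\wv^2-\frac18\wv^2\le E-F\le\frac12\wv^2+\frac18\wv^2 .
\end{equation*}
Substituting $\wv^2=\Theta^2E^{2a}$ yields \eqref{caseb2:eq:E-F-comparable} with $c_D=\frac14$ and $C_D=\frac58$.

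For the gradient, $|v-\nabla V|\le\wv+1\le2\wv=2\Theta E^a$, which is \eqref{caseb2:eq:grad-E-F}; we enlarge $C_D$ to $2$ if needed. Finally, $R_1$ is taken larger than $R_\Theta$ and large enough that Steps 1 and 2 hold.

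No real obstacle is expected. The only point to check is that the lower bound on $\Theta$ forces $\wv\to\infty$, so that the linear term $\nabla V\cdot v$ is dominated by the quadratic term. This relies on $\omega<a$, which holds since $\omega=\frac1{32}<\frac18=a$.
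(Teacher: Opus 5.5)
Your proof is correct and follows the same route as the paper: write $E-F=\frac{|v|^2}{2}-\nabla V\cdot v$, use $\wv\geq E^{a-\omega}\to\infty$ to make the linear term negligible against the quadratic one, and read the gradient bound off $\nabla_v(E-F)=v-\nabla V$. The only differences are cosmetic. You use just $|\nabla V|\le 1$, so you never actually need $E\asymp V$, whereas the paper invokes the decay $|\nabla V|\lesssim E^{1-\frac{1}{\alpha}}$. You also make the step $|v|^2\asymp\wv^2=\Theta^2E^{2a}$ explicit, which the paper leaves implicit.
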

Here, the proof is much more direct. 
\begin{proof}[{\bf Proof of \Cref{caseb2:lem:E-F}}]
Since
\begin{equation*}
  E-F=\frac{|v|^2}{2}-\nabla V\cdot v,
\end{equation*}
and on $E^{-\omega}\leq\Theta\leq2$, $\wv\geq E^{a-\omega}\to\infty$, 
\begin{equation*}
  \frac{|\nabla V|}{|v|}
  \lesssim C E^{1-\frac{1}{\alpha}-a+\omega}
\end{equation*}
is small. After increasing $R_1$, we may therefore assume
  $|\nabla V\cdot v|\leq\frac14|v|^2$. It follows that
\begin{equation*}
  \frac14|v|^2\leq E-F\leq\frac34|v|^2,
\end{equation*}
which is exactly \eqref{caseb2:eq:E-F-comparable}.  Finally,
\begin{equation*}
  \nabla_v(E-F)=v-\nabla V,
\end{equation*}
and the preceding estimates give \eqref{caseb2:eq:grad-E-F}.
\end{proof}

\subsection{Dissipation of $E$ and $F$}

The two basic phases $F$ and $E$ have completely explicit drifts at $\beta=2$ and the estimates are simpler.

\begin{lemma}\label{caseb2:lem:base-dissipation}
There exist $R_2>0$ and $\kappa_1>0$ such that, for every $0<\kappa\leq\kappa_1$, the following estimates hold.
\begin{enumerate}[label=\textup{(\roman*)},leftmargin=*]
\item If $E\geq R_2$ and $\Theta\leq2$, then
\begin{equation}\label{caseb2:eq:F-dissipation}
  \cL^\star F+4\kappa|\nabla_vF|^2
  \leq-\frac12|\nabla V|^2.
\end{equation}
\item If $E\geq R_2$ and $\Theta\geq E^{-\omega}$, then
\begin{equation}\label{caseb2:eq:E-dissipation}
  \cL^\star E+4\kappa|\nabla_vE|^2
  \leq-\frac12|v|^2.
\end{equation}
\end{enumerate}
\end{lemma}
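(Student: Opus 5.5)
The plan is to compute both drifts exactly, since at $\beta=2$ every quantity involved is explicit, and then check that all error terms are negligible in the stated zones when $E$ is large.

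\emph{Part (i).} Since $\chi(v)=v$, the general identity for the corrected phase gives
\begin{equation*}
\cL^\star F = v^\top D_x^2V\,v - \nabla V^\top(D_v\chi)\nabla V = v^\top D_x^2V\,v-|\nabla V|^2 ,
\end{equation*}
because $D_v\chi=\mathbf I$. Moreover $\nabla_vF=\nabla V$, so that
\begin{equation*}
\cL^\star F+4\kappa|\nabla_vF|^2 = v^\top D_x^2V\,v-(1-4\kappa)|\nabla V|^2 .
\end{equation*}
Taking $\kappa_1\le \frac1{16}$ reduces the claim to showing $|v^\top D_x^2V\,v|\le \frac14|\nabla V|^2$ on $\{E\ge R_2,\ \Theta\le2\}$.

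To check this, I would use $\|D_x^2V\|\le \wx^{\alpha-2}$ together with $|\nabla V|^2\ge\frac12\wx^{2\alpha-2}$ for $\wx\ge\sqrt2$. On $\Theta\le 2$ one has $\wv^2\le 4E^{2a}$, and by \Cref{caseb2:lem:infoTheta} also $E\asymp V\asymp\wx^\alpha$. The ratio is therefore bounded by
\begin{equation*}
C\,\wv^2\wx^{-\alpha}\lesssim E^{2a-1}=E^{-3/4},
\end{equation*}
which is small for $E$ large. This is the only real check in part (i), and it works because $a<\frac12$.

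\emph{Part (ii).} Here $\sfT E=0$ and $\sfL^\star E=d-|v|^2$, while $|\nabla_vE|^2=|v|^2$. This gives
\begin{equation*}
\cL^\star E+4\kappa|\nabla_vE|^2=d-(1-4\kappa)|v|^2 .
\end{equation*}
For $\kappa\le\frac1{16}$ this is at most $d-\frac34|v|^2$. On $\Theta\ge E^{-\omega}$ we have $\wv\ge E^{a-\omega}=E^{3/32}$, so for $E\ge R_2$ large, $|v|^2\ge 4d$ and hence $d\le\frac14|v|^2$. This yields the bound $-\frac12|v|^2$.

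The main obstacle is only the bookkeeping in part (i). First, $\wx$ must be large enough for the lower bound on $|\nabla V|^2$ to hold. This follows from $E\asymp V$ on $\Theta\le2$, because then $V\gtrsim R_2$ forces $\wx$ to be large. Second, the choice $a=\frac18$ must make $\wv^2\wx^{-\alpha}$ decay. Finally, $R_2$ is taken larger than $R_\Theta$ from \Cref{caseb2:lem:infoTheta} and larger than the thresholds produced in both parts.
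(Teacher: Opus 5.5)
Your proposal is correct and follows the paper's proof essentially step for step. You use the same exact identities $\cL^\star F=v^\top D_x^2V\,v-|\nabla V|^2$, $\nabla_vF=\nabla V$ and $\cL^\star E=d-|v|^2$, the same choice $\kappa_1\le\frac1{16}$, the same ratio bound $|v^\top D_x^2V\,v|/|\nabla V|^2\lesssim E^{2a-1}$ on $\{\Theta\le2\}$ via $E\asymp V$, and the same observation $\wv\ge E^{a-\omega}\to\infty$ for part (ii). Your bookkeeping is, if anything, slightly more explicit than the paper's: you use the sharper bound $\|D_x^2V\|\le\wx^{\alpha-2}$, the exponent $E^{3/32}$, and the threshold $|v|^2\ge4d$.
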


\begin{proof}[{\bf Proof of \Cref{caseb2:lem:base-dissipation}}]
Since $F=V+\nabla V\cdot v$ and $\sfL^\star v=-v$, the first-order terms cancel exactly:
\begin{equation}\label{caseb2:eq:LF-exact}
  \cL^\star F
  =v^\top D_x^2V\,v-|\nabla V|^2,
  \qquad
  \nabla_vF=\nabla V.
\end{equation}
For the present potential,
\begin{equation*}
  D_x^2V
  =\wx^{\alpha-2}I
   +(\alpha-2)\wx^{\alpha-4}x\otimes x,
  \qquad
  \|D_x^2V\|\leq3\wx^{\alpha-2}.
\end{equation*}
Also,
\begin{equation*}
  |\nabla V|^2
  =\wx^{2\alpha-2}\bigl(1-\wx^{-2}\bigr).
\end{equation*}
On $\{E\geq R_2,\ \Theta\leq2\}$, Lemma \ref{caseb2:lem:infoTheta} gives $E\asymp V$, and therefore
\begin{equation*}
  \frac{|v^\top D_x^2V\,v|}{|\nabla V|^2}
  \leq C_\alpha\frac{|v|^2}{V}
  \leq C_\alpha E^{2a-1}=o(1).
\end{equation*}
Thus, after increasing $R_2$,
\begin{equation*}
  |v^\top D_x^2V\,v|\leq\frac14|\nabla V|^2.
\end{equation*}
Taking $\kappa_1\leq\frac{1}{16}$ in \eqref{caseb2:eq:LF-exact} proves \eqref{caseb2:eq:F-dissipation}.

For the energy,
\begin{equation}\label{caseb2:eq:LE-exact}
  \cL^\star E=d-|v|^2,
  \qquad
  \nabla_vE=v.
\end{equation}
Hence, for $\kappa\leq\frac{1}{16}$,
\begin{equation*}
  \cL^\star E+4\kappa|\nabla_vE|^2
  =d-(1-4\kappa)|v|^2
  \leq d-\frac34|v|^2.
\end{equation*}
If $\Theta\geq E^{-\omega}$, then $\wv \geq E^{a-\omega}$, so $|v|\to\infty$ uniformly as $E\to\infty$.  Increasing $R_2$ once more yields \eqref{caseb2:eq:E-dissipation}.
\end{proof}

\subsection{Estimates on $\Phi$}

We now estimate the full phase.  The following proposition is the core of the construction.

\begin{proposition}\label{caseb2:prop:phase-drift}
There exist $\kappa_0>0$, $R_3>0$, and $c_0>0$ such that, for every $0<\kappa\leq\kappa_0$,
\begin{equation}\label{caseb2:eq:phase-drift}
  \cL^\star\Phi+2\kappa|\nabla_v\Phi|^2
  \leq-c_0\left[
    \bigl(1-\zeta(\Theta)\bigr)|\nabla V|^2
    +\zeta(\Theta)|v|^2
  \right]
\end{equation}
whenever $E\geq R_3$.
\end{proposition}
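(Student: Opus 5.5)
Since $\beta=2$, the phase is $\Phi=\Phi_0+(1-\zeta(\Theta))F+\zeta(\Theta)E$. The plan is to copy the proof of \Cref{prop:LyapPhi} line by line, replacing $E^{\frac\beta2}$ by $E$ and using the simpler inputs \Cref{caseb2:lem:infoTheta}, \Cref{caseb2:lem:E-F} and \Cref{caseb2:lem:base-dissipation}.

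\textbf{Step 1: algebraic reduction.} Exactly as in \eqref{eq:grad-square-split} and \eqref{eq:LPhi-exact}, write
\begin{align*}
\cLs\Phi+2\kappa|\nabla_v\Phi|^2\le{}&(1-\zeta)\bigl(\cLs F+4\kappa|\nabla_vF|^2\bigr)+\zeta\bigl(\cLs E+4\kappa|\nabla_vE|^2\bigr)\\
&+\zeta'(\Theta)(\cLs\Theta)(E-F)+2\zeta'(\Theta)\nabla_v\Theta\cdot\nabla_v(E-F)\\
&+\zeta''(\Theta)|\nabla_v\Theta|^2(E-F)+4\kappa\,\zeta'(\Theta)^2(E-F)^2|\nabla_v\Theta|^2 .
\end{align*}
The transition terms (second and third lines) all vanish where $\Theta\ge2$, since $\zeta$ is constant there.

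\textbf{Step 2: zone $\Theta\le E^{-\omega}$.} Here $\Theta\le1$ and $\zeta(\Theta)\le e^{-E^{\omega}}$. By \Cref{caseb2:lem:base-dissipation}(i), the first term contributes at most $-\frac14|\nabla V|^2$, because $1-\zeta\ge\frac12$.

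The transition terms are bounded using only crude estimates. In this zone $|E-F|\lesssim E^{2a}+|\nabla V|E^a$, and \Cref{caseb2:lem:infoTheta}(2) controls $\nabla_v\Theta$ and $\nabla_v^2\Theta$. The quantity $\cLs\Theta$ is bounded by a polynomial in $E$, from the exact formula \eqref{eq:Lz-exact}, and $\Theta^{-k}\le E^{k\omega}$ is only polynomial. All these terms therefore carry the factor $\zeta(\Theta)\Theta^{-4}\lesssim e^{-E^\omega}\times\mathrm{poly}(E)$ (using $\Theta^{-4}\zeta(\Theta)\le E^{4\omega} e^{-E^\omega}$ for the $\zeta''$ term). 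Such quantities are negligible against $|\nabla V|^2\asymp E^{-2(1-\alpha)/\alpha}$. The $\zeta$-weighted energy term is at most $\zeta\, d$, which is also negligible.

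\textbf{Step 3: zone $E^{-\omega}\le\Theta\le1$.} This is the main step. The good transition term comes from \Cref{caseb2:lem:infoTheta}(3) together with \eqref{caseb2:eq:E-F-comparable} and $\zeta'=\Theta^{-2}\zeta$:
\[
\zeta'(\cLs\Theta)(E-F)\le-\tfrac{c_D}{2}\zeta\,\Theta E^{2a}.
\]
The remaining transition terms are compared with this one.
\begin{itemize}
\item The $\zeta''$ term is at most $\Theta^{-4}\zeta\cdot E^{-2a}\cdot\Theta^2E^{2a}=\zeta\Theta^{-2}$. Its ratio to the good term is $\Theta^{-3}E^{-2a}\le E^{3\omega-2a}\to0$, since $3\omega<2a$.
\item The cross term is at most $\Theta^{-2}\zeta\,E^{-a}\,\Theta E^a=\zeta\Theta^{-1}$, by \eqref{caseb2:eq:grad-E-F}. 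Its ratio to the good term is $\Theta^{-2}E^{-2a}\le E^{2\omega-2a}\to0$.
\item The square term is at most $\kappa\zeta^2\Theta^{-4}\Theta^4E^{4a}E^{-2a}=\kappa\zeta^2E^{2a}$. Its ratio to the good term is $\kappa\zeta\Theta^{-1}\le\kappa e^{-1/\Theta}\Theta^{-1}\le\kappa$, so it is absorbed once $\kappa_0$ is small.
\end{itemize}
Both base terms are negative by \Cref{caseb2:lem:base-dissipation}, giving $-\frac12(1-\zeta)|\nabla V|^2-\frac12\zeta|v|^2$.

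\textbf{Step 4: zones $1\le\Theta\le2$ and $\Theta\ge2$.} On $1\le\Theta\le2$ we have $\zeta\ge e^{-1}$ and $|v|^2\asymp E^{2a}$. The energy part gives $-c|v|^2\asymp-cE^{2a}$. The transition terms are $O(1)$ by boundedness of $\zeta',\zeta''$ and \Cref{caseb2:lem:E-F}, except the square term, which is $O(\kappa E^{2a})$ and is absorbed for $\kappa$ small. Here $\cLs\Theta$ is $O(1)$ by \Cref{caseb2:lem:infoTheta}(3) and the exact formula.

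On $\Theta\ge2$ we have $\Phi=\Phi_0+E$, and \eqref{caseb2:eq:E-dissipation} gives the bound directly.

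Collecting the four zones, with $R_3$ large and $\kappa_0\le\kappa_1$ small, yields \eqref{caseb2:eq:phase-drift}. The only delicate points are the exponent inequalities $3\omega<2a$ in Step 3 and the $\kappa$-absorption of the square term.
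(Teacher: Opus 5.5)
Your proof is correct. It uses the same zone decomposition and the same three auxiliary lemmas as the paper. It departs from the paper only at the one step the paper singles out: the squared term
\[
4\kappa\,\zeta'(\Theta)^2(E-F)^2|\nabla_v\Theta|^2\lesssim\kappa\,\zeta(\Theta)^2E^{2a}
\qquad\text{on }\{E^{-\omega}\le\Theta\le1\}.
\]
The paper uses $\zeta(s)\le Cs^2$ on $(0,1]$ to bound this by $C\kappa\,\zeta(\Theta)\Theta^2E^{2a}\lesssim\kappa\,\zeta(\Theta)|v|^2$. It then absorbs it into the energy dissipation $-\frac12\zeta(\Theta)|v|^2$ from \Cref{caseb2:lem:base-dissipation}(ii). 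You instead compare it with the good transition term $-\frac{c_D}{2}\zeta(\Theta)\Theta E^{2a}$, using $\sup_{0<s\le1}s^{-1}e^{-1/s}=e^{-1}$.

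Your version is the more literal continuation of the $\beta<2$ bookkeeping. There the bracket $\kappa\zeta\Theta^{1-\beta}E^{\frac\beta2-1+a(2-\beta)}$ has $E$-exponent exactly $0$ at $\beta=2$, so smallness must come from $\kappa$ rather than from large $E$. Your argument makes that explicit and leaves the energy dissipation untouched. The paper's version avoids this comparison by trading one power of $\zeta$ for $\Theta^2$. Both require $\kappa_0$ small, and both close.

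Two small slips should be fixed.
\begin{itemize}
\item In Step 2, $\Theta\le E^{-\omega}$ gives $\Theta^{-k}\ge E^{k\omega}$, not $\le$. The polynomial control comes instead from $\Theta\ge E^{-a}$, since $\wv\ge1$. Alternatively it comes from the monotonicity of $s\mapsto s^{-k}e^{-1/s}$ near $0$, which is what your parenthetical actually uses.
\item In Step 4, the term $\zeta'(\Theta)(\cLs\Theta)(E-F)$ is of size $E^{2a}$, not $O(1)$, because $E-F\asymp E^{2a}$ there. It is harmless only because it is nonpositive: $\zeta'\ge0$, $\cLs\Theta\le-\Theta/2$ by \Cref{caseb2:lem:infoTheta}(3), and $E-F>0$. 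You should drop it by sign rather than count it among the bounded remainders.
\end{itemize}
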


The only difference in the proof compared to the proof of \Cref{prop:LyapPhi} is the estimate of the squared term in the zone $E^{-\omega}\leq\Theta\leq1$. It should now be estimated as follows.  \begin{equation*}
  4\kappa|\zeta'(\Theta)(E-F)\nabla_v\Theta|^2
  \leq C\kappa\zeta(\Theta)^2E^{2a}.
\end{equation*}
Since
\begin{equation*}
  \sup_{0<s\leq1}s^{-2}e^{-\frac{1}{s}}<\infty,
\end{equation*}
one has $\zeta(s)\leq Cs^2$ on $(0,1]$, and therefore
\begin{equation*}
  4\kappa|\zeta'(\Theta)E-F\nabla_v\Theta|^2
  \leq C\kappa\zeta(\Theta)\Theta^2E^{2a}
  \leq C\kappa\zeta(\Theta)|v|^2.
\end{equation*}
After increasing $R_3$ and then choosing $\kappa_0$ small enough, it is absorbed by the energy dissipation. The rest of the proof is the same, we do not reproduce it. 

The end of the proof of \Cref{caseb2:thm:beta2} being similar to the end of the proof of \Cref{thm:Lyap-alphalt1-betalt2}, we shall not reproduce the arguments. 

\bibliographystyle{plain} 
\bibliography{library} 

\end{document}